\title[Finitely generated congruences]{Finitely generated congruences on tropical rational function semifields}
\author{Song JuAe}
\address{Department of Mathematics, Graduate School of Science, Kyoto University, Kitashirakawa Oiwake-cho, Sakyo-ku, Kyoto 606-8502, Japan.}
\email{song.juae.8m@kyoto-u.ac.jp}
\subjclass[2020]{14T10, 14T15, 14T20, 14T25, 15A80}
\keywords{tropical rational function semifields, congruences, congruence varieties, $\boldsymbol{R}$-rational polyhedral sets, tropical curves}
\newtheorem{dfn}{Definition}[section]
\newtheorem{thm}[dfn]{Theorem}
\newtheorem{prop}[dfn]{Proposition}
\newtheorem{cor}[dfn]{Corollary}
\newtheorem{lemma}[dfn]{Lemma}
\def\Gamma{\varGamma}
\begin{document}

\maketitle

\begin{abstract}
We prove that the congruence on the tropical rational function semifield in $n$-variables associated with a subset $V$ of $\boldsymbol{R}^n$ is finitely generated if and only if the closure of $V$ is a finite union of $\boldsymbol{R}$-rational polyhedral sets.
With this fact, we characterize rational function semifields of tropical curves.
\end{abstract}

\section{Introduction}
	\label{section1}

The name of tropical geometry was given in \cite{Sturmfels}.
It is an algebraic geometry over the tropical semifield $\boldsymbol{T} := ( \boldsymbol{R} \cup \{ -\infty \}, \oplus := \operatorname{max}, \odot := +)$ and has many sources; e.g., Bergman fans (\cite{Bergman}), amoebas (\cite{GKZ}), tropical algebra (introduced by Imre Simon, cf.~\cite{Simon}) and patchworkings (cf.~\cite{Viro}).
In the last two decades, it has been revealed that tropical geometry has many applications in other areas.
For example, Mikhalkin's correspondence Principle (\cite{Mikhalkin}) states that the Gromov--Witten numbers can be found tropically.
Another proof of Brill--Noether theorem was given in \cite{CDPR}.
Tropical polynomials are used to describe the earliest finishing times of project networks in \cite{Kobayashi=Odagiri}.

By such backgrounds, it is expected that tropical geometry itself has an interest and rich structure.
To construct algebraic foundation for tropical geometry, Bertram and Easton (\cite{Bertram=Easton}) and Jo\'{o} and Mincheva (\cite{Joo=Mincheva1}) proved a tropical Nullstellensatz for congruences on tropical polynomial semirings $\boldsymbol{T}[X_1, \ldots, X_n]$.
As in the classical algebraic geometry, corresponding to a congruence $E$ on $\boldsymbol{T}[X_1, \ldots, X_n]$, the geometric object, called a congruence variety, $\boldsymbol{V}(E)_0 \subset \boldsymbol{T}^n$ is defined, and a subset $V$ of $\boldsymbol{T}^n$ defines the congruence $\boldsymbol{E}(V)_0$ on $\boldsymbol{T}[X_1, \ldots, X_n]$ (see Subsection \ref{subsection2.4} for more details).
With these concepts, the tropical Nullstellensatz gives a characterization of $\boldsymbol{E}(\boldsymbol{V}(E)_0)_0$ when $E$ is finitely generated as a congruence, i.e., states that $\boldsymbol{E}(\boldsymbol{V}(E)_0)_0$ coincides with the congruence $\widehat{E}$ defined from $E$ with specific conditions.
There is a problem that the tropical Nullstellensatz does not ensure that $\widehat{E}$ is also finitely generated as a congruence.

The current paper is devoted to solving this problem when $E$ is a finitely generated congruence on the tropical rational function semifield $\overline{\boldsymbol{T}(X_1, \ldots, X_n)}$.
More precisely, we prove the following theorem:

\begin{thm}
	\label{thm:main}
Let $V$ be a subset of $\boldsymbol{R}^n$.
Then $\boldsymbol{E}(V)$ is finitely generated if and only if the closure (in the usual sense) $\overline{V}$ of $V$ in $\boldsymbol{R}^n$ is a finite union of $\boldsymbol{R}$-rational polyhedral sets.
\end{thm}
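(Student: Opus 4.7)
The proof is organized around the two directions, unified by one common reduction: because a tropical rational function is continuous on $\boldsymbol{R}^n$, one has $\boldsymbol{E}(V) = \boldsymbol{E}(\overline{V})$. So in either direction I may replace $V$ by its closure, which also explains the appearance of the closure in the statement.

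For the forward direction, suppose $\boldsymbol{E}(V) = \langle (f_1, g_1), \ldots, (f_k, g_k) \rangle$. Since evaluation at any $x \in \boldsymbol{R}^n$ is a semifield homomorphism, $\boldsymbol{V}$ of a congruence generated by a set equals $\boldsymbol{V}$ of the set, so $\boldsymbol{V}(\boldsymbol{E}(V)) = \bigcap_{i=1}^{k} \{x \in \boldsymbol{R}^n : f_i(x) = g_i(x)\}$. The zero set of any single pair is a finite union of $\boldsymbol{R}$-rational polyhedral sets (from the polyhedral decomposition of piecewise $\boldsymbol{Z}$-linear functions), and this property is preserved under finite intersection. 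It remains to prove the \emph{general} identity $\boldsymbol{V}(\boldsymbol{E}(V)) = \overline{V}$ for every $V$ (no finite generation needed). The inclusion $\overline{V} \subset \boldsymbol{V}(\boldsymbol{E}(V))$ follows from continuity. For the reverse, given $x \notin \overline{V}$, choose $x' \in \boldsymbol{Q}^n$ and a real $\epsilon > 0$ with $x \in B_\infty(x', \epsilon)$ and $B_\infty(x', \epsilon) \cap \overline{V} = \emptyset$, and consider the ``tropical bump'' $h(y) = (\epsilon - d_\infty(y, x')) \oplus 0$, which is a tropical rational function (since $d_\infty(\,\cdot\,, x')$ is a tropical polynomial when $x' \in \boldsymbol{Q}^n$), vanishes on $\overline{V}$, and is positive at $x$. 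Then $(h, 0) \in \boldsymbol{E}(V)$ while $h(x) \neq 0$, yielding $x \notin \boldsymbol{V}(\boldsymbol{E}(V))$.

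For the reverse direction, write $\overline{V} = P_1 \cup \cdots \cup P_m$ with each $P_i$ an $\boldsymbol{R}$-rational polyhedral set, cut out by finitely many half-spaces $\langle v, x \rangle + c \geq 0$ that translate to tropical equations of the form $a \oplus b = a$, giving a finite candidate set $S_i$. Form the tropical ``distance'' $D_i = \bigodot_{(f,g) \in S_i} (f \oplus g) \odot (f^{-1} \oplus g^{-1})$ (the tropical encoding of $\sum |f-g|$), so that $D_i(x) = 0$ iff $x \in P_i$, and encode the union via the single pair $(D_1^{-1} \oplus \cdots \oplus D_m^{-1}, 0)$, whose defining equation holds precisely on $\overline{V}$. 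The main obstacle is then to show that the finite collection just constructed actually \emph{generates} the whole congruence $\boldsymbol{E}(\overline{V})$, not merely cuts out $\overline{V}$ as a variety: every pair $(f, g)$ with $f|_{\overline{V}} = g|_{\overline{V}}$ must be derivable from these generators using the semifield operations and the congruence axioms. I expect this step requires a common polyhedral refinement of $\boldsymbol{R}^n$ adapted both to $\overline{V}$ and to the Newton subdivisions of $f$ and $g$, deriving a local identity from the generators on each cell and gluing; ensuring that the local derivations agree across shared faces, so as to combine into a single global derivation, is the crux of the difficulty.
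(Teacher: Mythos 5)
Your forward direction is sound and essentially the paper's: the variety of a finitely generated congruence is the finite intersection of the varieties of the generating pairs, each of which is a finite union of $\boldsymbol{R}$-rational polyhedral sets (this is Lemma \ref{lem1}), and $\boldsymbol{V}(\boldsymbol{E}(V)) = \overline{V}$ (your bump-function argument is a legitimate elementary substitute for the paper's citation of \cite[Proposition 3.9]{JuAe4}). Your candidate generator for the reverse direction is also essentially the one the paper builds in Lemmas \ref{lem5} and \ref{lem6}.

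The genuine gap is the step you flag yourself: showing that the single constructed pair actually generates all of $\boldsymbol{E}(\overline{V})$. The strategy you sketch (a common polyhedral refinement, local derivations on cells, gluing across shared faces) is not how this is done, and it is unclear how ``gluing of derivations'' could be made precise for congruences. The mechanism the paper uses is order-theoretic: by \cite[Proposition 2.2(iii)]{Joo=Mincheva1}, in $\overline{\boldsymbol{T}(X_1, \ldots, X_n)}$ one has $(g, 0) \in \langle (f, 0) \rangle$ whenever $0 \le g \le f^{\odot N}$ for some $N$. After reducing an arbitrary pair $(g, h) \in \boldsymbol{E}(\overline{V})$ to the pairs $\left(g \odot h^{\odot (-1)} \oplus 0, 0\right)$ and $\left(g^{\odot (-1)} \odot h \oplus 0, 0\right)$ (Lemma \ref{lem2}), generation follows from two quantitative estimates against the Euclidean distance function: (i) every $g \ge 0$ vanishing on $\overline{V}$ satisfies $g(x) \le \operatorname{dist}\left(x, \overline{V}\right)^{\odot N}$ for an $N$ depending only on the exponents occurring in $g$ (Lemma \ref{lem4}, a Lipschitz bound along the segment from $x$ to its nearest point of $\overline{V}$); and (ii) the constructed generator $D$ satisfies $D(x)^{\odot k} \ge \operatorname{dist}\left(x, \overline{V}\right)$ for some $k$. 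Estimate (ii) is the real content and is a Hoffman-type error bound for polyhedra: for $P = \{ y \,|\, \forall i,\ x_i \cdot y + a_i \le 0 \}$ one must show that $\operatorname{max}_i (x_i \cdot y + a_i)$, truncated at $0$, dominates a positive constant times $\operatorname{dist}(y, P)$; the paper proves this via the decomposition $\boldsymbol{R}^n = \operatorname{Int}(P) \sqcup \bigsqcup_{w \in \partial(P)} (\{ w \} + \operatorname{Cone}\{ x_i \,|\, x_i \cdot w + a_i = 0 \})$ of Corollary \ref{cor:decomposition3} together with the compactness argument of Lemma \ref{lem:distance}. Your $D_i$ (a sum of violations) would in fact satisfy (ii), but without these two estimates and the domination criterion the generation claim is unproved, so the reverse direction is incomplete.
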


Here $\boldsymbol{E}(V)$ denotes the congruence on $\overline{\boldsymbol{T}(X_1, \ldots, X_n)}$ associated with $V$.
In the proof, we obtain a specific generating set of $\boldsymbol{E}(V)$ consisting of only one element explicitly.
This theorem has the following several corollaries:

\begin{cor}
	\label{cor1}
For a congruence $E$ on $\overline{\boldsymbol{T}(X_1, \ldots, X_n)}$, if $E$ is finitely generated as a congruence, then so is $\boldsymbol{E}(\boldsymbol{V}(E))$.
\end{cor}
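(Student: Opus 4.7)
The plan is to reduce this corollary directly to Theorem \ref{thm:main}: it suffices to prove that the closure of $\boldsymbol{V}(E)$ in $\boldsymbol{R}^n$ is a finite union of $\boldsymbol{R}$-rational polyhedral sets, for then $\boldsymbol{E}(\boldsymbol{V}(E))$ is automatically finitely generated by the ``only if'' direction of the main theorem. In fact I expect to prove the stronger statement that $\boldsymbol{V}(E)$ is already such a union, so no topological closure is actually needed.

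First I would fix a finite generating set $(f_1, g_1), \ldots, (f_k, g_k)$ of $E$. For each $x \in \boldsymbol{R}^n$ the evaluation map $\mathrm{ev}_x \colon \overline{\boldsymbol{T}(X_1, \ldots, X_n)} \to \boldsymbol{T}$ is a semifield homomorphism, so its equalizer is a congruence; it contains $E$ if and only if it contains each chosen generator. This yields
\[
\boldsymbol{V}(E) \;=\; \bigcap_{i=1}^{k} \bigl\{ x \in \boldsymbol{R}^n : f_i(x) = g_i(x) \bigr\}.
\]

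Next I would invoke the piecewise linear structure of tropical rational functions: each $f_i$ and $g_i$ is a difference of max-plus polynomials, hence continuous on $\boldsymbol{R}^n$ and piecewise affine with integer slopes on finitely many $\boldsymbol{R}$-rational polyhedral regions of linearity. On each common region of linearity of $f_i$ and $g_i$ the locus $\{f_i = g_i\}$ is cut out by a single affine equation with integer coefficients, and is therefore $\boldsymbol{R}$-rational polyhedral; taking the union over the finitely many regions shows $\{f_i = g_i\}$ is a finite union of $\boldsymbol{R}$-rational polyhedral sets. The class of such finite unions is closed under finite intersection (by distributing unions over intersections), so $\boldsymbol{V}(E)$ inherits the same form, is already closed, and coincides with its own closure. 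Theorem \ref{thm:main} then yields the conclusion.

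The main obstacle I anticipate is not in the polyhedral geometry, which is routine once the piecewise linear picture is in place, but in the first reduction. Using the precise definition of $\boldsymbol{V}(E)$ from Subsection \ref{subsection2.4} (in particular its treatment of points at which some $f_i$ or $g_i$ formally takes a value outside $\boldsymbol{R}$), one must verify that the common equalizer of any finite generating set really does compute $\boldsymbol{V}(E)$. Once this compatibility is established, the corollary is essentially a direct application of the main theorem.
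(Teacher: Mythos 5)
Your argument is correct and follows essentially the paper's route: the paper also derives this corollary from the fact that $\boldsymbol{V}(E)$ is a (closed) finite union of $\boldsymbol{R}$-rational polyhedral sets (Corollary \ref{cor4}, proved via the piecewise-linear analysis of Lemma \ref{lem1}) combined with Theorem \ref{thm:main}. The only cosmetic difference is that the paper first collapses the finite generating set to a single pair $(f,0)$ via Proposition \ref{prop1} before applying Lemma \ref{lem1} once, whereas you intersect the equalizer loci $\boldsymbol{V}((f_i,g_i))$ of the individual generators and use stability of finite unions of $\boldsymbol{R}$-rational polyhedral sets under finite intersection; the compatibility issue you flag at the end does not arise, since $\boldsymbol{V}(E)$ is by definition a subset of $\boldsymbol{R}^n$, where evaluation of any element of $\overline{\boldsymbol{T}(X_1,\ldots,X_n)}$ is a genuine $\boldsymbol{T}$-valued semiring homomorphism, so containment of $E$ in its kernel congruence is indeed tested on generators.
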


Here $\boldsymbol{V}(E)$ is the congruence variety associated with $E$ and is in $\boldsymbol{R}^n$.

\begin{cor}[Corollary \ref{cor7}]
	\label{cor2}
Let $V$ (resp. $W$) be a subset of $\boldsymbol{R}^n$ (resp. $\boldsymbol{R}^m$).
When the quotient $\boldsymbol{T}$-algebra $\overline{\boldsymbol{T}(X_1, \ldots, X_n)} / \boldsymbol{E}(V)$ is isomorphic to $\overline{\boldsymbol{T}(Y_1, \ldots, Y_m)} / \boldsymbol{E}(W)$ as a $\boldsymbol{T}$-algebra, $\boldsymbol{E}(V)$ is finitely generated as a congruence if and only if so is $\boldsymbol{E}(W)$.
\end{cor}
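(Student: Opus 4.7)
The plan is a direct algebraic argument that does not invoke Theorem \ref{thm:main}. Write $F_n := \overline{\boldsymbol{T}(X_1, \ldots, X_n)}$ and $F_m := \overline{\boldsymbol{T}(Y_1, \ldots, Y_m)}$. By symmetry it suffices to show that if $\boldsymbol{E}(V)$ is generated as a congruence on $F_n$ by finitely many pairs $(p_1, q_1), \ldots, (p_k, q_k)$, then $\boldsymbol{E}(W)$ is also finitely generated. Fix a $\boldsymbol{T}$-algebra isomorphism $\varphi : F_n/\boldsymbol{E}(V) \to F_m/\boldsymbol{E}(W)$; for each $i \in \{1, \ldots, n\}$ choose a nonzero representative $s_i \in F_m$ of $\varphi(\overline{X_i})$ and for each $j \in \{1, \ldots, m\}$ a nonzero representative $r_j \in F_n$ of $\varphi^{-1}(\overline{Y_j})$. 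These representatives exist because a semifield isomorphism sends units to units, and in a nontrivial quotient no unit class contains the zero element $-\infty$. We thus obtain $\boldsymbol{T}$-algebra homomorphisms $\sigma : F_n \to F_m$, $X_i \mapsto s_i$, and $\tau : F_m \to F_n$, $Y_j \mapsto r_j$, which descend to $\varphi$ and $\varphi^{-1}$ respectively.

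The key claim is that $\boldsymbol{E}(W)$ is generated, as a congruence on $F_m$, by the $k + m$ pairs
\[
\bigl(\sigma(p_\alpha),\, \sigma(q_\alpha)\bigr) \quad (1 \le \alpha \le k)
\qquad\text{and}\qquad
\bigl(Y_j,\, \sigma(r_j)\bigr) \quad (1 \le j \le m).
\]
Containment of these pairs in $\boldsymbol{E}(W)$ is immediate by unwinding the definitions of $\sigma$, $\tau$, and $\varphi$. For the reverse inclusion, let $C$ denote the congruence they generate on $F_m$. Applying $\sigma$ termwise to any congruence derivation in $F_n$ of an element of $\boldsymbol{E}(V)$ from the generators $(p_\alpha, q_\alpha)$ produces a valid congruence derivation in $F_m$ starting from the first batch of generators of $C$, so $\sigma(\boldsymbol{E}(V)) \subseteq C$. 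Given any $(p, q) \in \boldsymbol{E}(W)$, applying $\varphi^{-1}$ yields $(\tau(p), \tau(q)) \in \boldsymbol{E}(V)$, hence $(\sigma\tau(p), \sigma\tau(q)) \in C$. The relations $Y_j \sim \sigma(r_j)$ then propagate through $+$, $\cdot$, and inversion, by induction on the complexity of a rational expression in the $Y_j$, to give $\sigma\tau(p) \equiv p$ and $\sigma\tau(q) \equiv q$ modulo $C$; transitivity then delivers $(p, q) \in C$.

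The step requiring most care is \emph{apply $\sigma$ termwise to a congruence derivation}: the inversion rule requires $\sigma$ to preserve nonzero elements, which holds for any $\boldsymbol{T}$-algebra homomorphism between nontrivial semifields, since otherwise the multiplicative identity would be forced to equal $-\infty$ and collapse the codomain. The degenerate case of a trivial quotient corresponds to $V$ or $W$ empty, in which case $\boldsymbol{E}(V)$ is the full congruence on $F_n$ and is finitely generated (e.g.\ by a single pair $(X_1, -\infty)$), and the same holds on the other side, so the equivalence is vacuous. Modulo these technicalities the proof is essentially \emph{transport the generators of $\boldsymbol{E}(V)$ through $\varphi$ and append the coordinate-change relations $Y_j \sim \sigma(r_j)$}, requiring no geometric input from Theorem \ref{thm:main}.
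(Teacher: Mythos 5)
Your argument is correct and takes a genuinely different route from the paper's. The paper proves this corollary geometrically: by Corollary \ref{cor4} it writes $\overline{W}$ as a finite union of $\boldsymbol{R}$-rational polyhedral sets, identifies $\overline{V}$ with $\theta(\overline{W})$ using \cite[Theorem 3.15]{JuAe4}, shows via Lemma \ref{lem7} that this image is again a finite union of $\boldsymbol{R}$-rational polyhedral sets, and concludes by Theorem \ref{thm:main}. Your proof is instead the purely algebraic ``finite presentability does not depend on the presentation'' argument: the inclusion $\sigma(\boldsymbol{E}(V)) \subseteq C$ is cleanest phrased as saying that the pullback $\{(u,v) : (\sigma(u),\sigma(v)) \in C\}$ is a congruence on $F_n$ containing the $(p_\alpha,q_\alpha)$ and hence containing $\boldsymbol{E}(V)$, and your induction showing $(p,\sigma\tau(p)) \in C$ for all $p$ is sound, including the inversion step, since a $\boldsymbol{T}$-algebra homomorphism between semifields with nontrivial target cannot send a unit to $-\infty$ and since every element of $F_m$ is a rational expression in the $Y_j$ over $\boldsymbol{T}$. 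The one point you assert rather than prove is that $X_i \mapsto s_i$ and $Y_j \mapsto r_j$ actually define $\boldsymbol{T}$-algebra homomorphisms $\sigma$ and $\tau$ between tropical rational function semifields: these semifields are fraction semifields of quotients by functional equality, so the substitution principle is not automatic and should be justified by (the analogue of) \cite[Lemma 3.11]{JuAe4}, which the paper invokes for the target $\operatorname{Rat}(\Gamma)$. As for what each approach buys: yours is more elementary, avoids all polyhedral geometry, and applies verbatim to any two finite presentations of the same $\boldsymbol{T}$-algebra as quotients of tropical rational function semifields; the paper's geometric detour is heavier but also delivers the stronger first half of Corollary \ref{cor7} (an injective, not necessarily surjective, $\psi$ with closed image $\theta(\overline{W})$ transports finite generation), which your transport-of-generators argument does not recover.
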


\begin{cor}[{cf.~\cite[Proposition 3.9(i)]{Joo=Mincheva2}}]
	\label{cor3}
Let $V$ be a finite union of $\boldsymbol{R}$-rational polyhedral sets in $\boldsymbol{R}^n$.
Then there exist $f_1, f_2 \in \overline{\boldsymbol{T}[X_1, \ldots, X_n]}$ such that $\boldsymbol{E}(V)_1 = \{ (g, h) \in \overline{\boldsymbol{T}[X_1, \ldots, X_n]} \times \overline{\boldsymbol{T}[X_1, \ldots, X_n]} \,|\, \exists k \in \boldsymbol{Z}_{>0}: (g \odot f_2^{\odot k}, h \odot f_2^{\odot k}) \in \langle (f_1, f_2) \rangle \}$ and $f_1(x) \ge f_2(x)$ for any $x \in \boldsymbol{T}^n$.
In particular, if $f_2$ is taken as a constant tropical monomial of a real number, then $\boldsymbol{E}(V)_1 = \langle (f_1, f_2) \rangle$ holds.
\end{cor}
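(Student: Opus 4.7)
The plan is to leverage Theorem \ref{thm:main}, whose proof, as advertised, produces a single generating pair for $\boldsymbol{E}(V)$ explicitly. The task of the corollary is then twofold: to refine that pair into a polynomial pair $(f_1, f_2)$ with $f_1(x) \ge f_2(x)$ for all $x \in \boldsymbol{T}^n$, and to quantify the gap between the polynomial and the rational-function congruence that it generates in terms of saturation at $f_2$.

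To pass from rational functions to polynomials, I would take the explicit single generator $(F_1, F_2)$ of $\boldsymbol{E}(V)$ produced by Theorem \ref{thm:main} and write each component as a quotient $F_i = a_i \odot b^{\odot(-1)}$ of polynomials; since $b$ is invertible in $\overline{\boldsymbol{T}(X_1, \ldots, X_n)}$, the pair $(a_1, a_2)$ generates the same congruence. The dominance condition $f_1 \ge f_2$ should, I expect, be readable off the shape of the pair coming from the proof of Theorem \ref{thm:main}; for instance, an explicit construction based on half-space inequalities $L_i \le 0$ describing the polyhedral sets would naturally yield an $f_1$ of the form $\bigoplus_i L_i \oplus f_2$, already pointwise dominating $f_2$. (Replacing a polynomial pair $(a_1, a_2)$ by $(a_1 \oplus a_2, a_2)$ after the fact does not obviously preserve the generated congruence in an idempotent semifield, so it seems safer to extract the dominance directly from the construction.)

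The inclusion $\supseteq$ in the saturation formula is immediate: if $(g \odot f_2^{\odot k}, h \odot f_2^{\odot k}) \in \langle (f_1, f_2) \rangle \subseteq \boldsymbol{E}(V)$, then for every $x \in V$ one has $g(x) + k f_2(x) = h(x) + k f_2(x)$, and since $f_2$ is a polynomial and $V \subseteq \boldsymbol{R}^n$ the value $f_2(x)$ is finite and can be cancelled, giving $g|_V = h|_V$. The opposite inclusion is the substantive one: given a polynomial pair $(g, h) \in \boldsymbol{E}(V)_1 \subseteq \langle (f_1, f_2) \rangle$ (the latter now as a congruence on the rational function semifield), one has a finite derivation of $g \sim h$ using semifield operations applied to the pair $(f_1, f_2)$. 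The plan is to multiply both sides by a sufficiently high power $f_2^{\odot k}$ so that every inverse appearing along the derivation is cleared, producing a parallel derivation inside the polynomial semiring and hence $(g \odot f_2^{\odot k}, h \odot f_2^{\odot k}) \in \langle (f_1, f_2) \rangle$ as a polynomial congruence. That $f_2$ specifically suffices must be traceable to the construction in Theorem \ref{thm:main}: the single generator should be built so that the only inverses required in subsequent rational manipulations are of $f_2$.

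The last sentence of the corollary falls out at once: if $f_2 = c \in \boldsymbol{R}$ is a constant tropical monomial, then $f_2^{\odot k} = kc$ is already invertible in $\overline{\boldsymbol{T}[X_1, \ldots, X_n]}$, so multiplying by it is an automorphism of the polynomial congruence and the saturation collapses to $\boldsymbol{E}(V)_1 = \langle (f_1, f_2) \rangle$. I expect the decisive obstacle to be the denominator-clearing step in the $\subseteq$ direction, which depends on bookkeeping the rational-function derivation and bounding all intermediate denominators by a common power of $f_2$; most of this actual work should, in fact, already be hidden inside the explicit construction from the proof of Theorem \ref{thm:main}.
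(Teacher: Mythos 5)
Your overall architecture matches the paper's: take the single generator $(f,0)$ of $\boldsymbol{E}(V)$ with $f \ge 0$ from the proof of Theorem \ref{thm:main}, write $f = f_1 \odot f_2^{\odot(-1)}$ with $f_1, f_2$ polynomial functions (so $f_1 \ge f_2$ follows from $f \ge 0$, extended to $\boldsymbol{T}^n$ via \cite[Lemma 3.7.4]{Giansiracusa=Giansiracusa2}), note that the easy inclusion follows because every pair in $\langle(f_1,f_2)\rangle$ agrees on $V$ and $f_2$ takes finite values there, and observe that a constant $f_2$ is invertible in $\overline{\boldsymbol{T}[X_1,\ldots,X_n]}$ so the saturation collapses. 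All of that is fine.

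The gap is in the substantive inclusion $\boldsymbol{E}(V)_1 \subseteq \{(g,h) \mid \exists k: (g\odot f_2^{\odot k}, h\odot f_2^{\odot k}) \in \langle(f_1,f_2)\rangle\}$. Your plan — take a finite derivation witnessing $(g,h) \in \langle(f,0)\rangle$ in the rational function semifield and clear all denominators by a single power of $f_2$ — is not justified and does not work as stated: the intermediate elements of such a derivation are arbitrary tropical rational functions whose denominators have no reason to be powers of $f_2$, and you yourself flag this as the ``decisive obstacle'' without resolving it. The paper circumvents derivation-tracking entirely by an order-theoretic argument. The quantitative content of Lemma \ref{lem4} and Corollary \ref{cor5} gives, for every $(g,h) \in \boldsymbol{E}(V)$, some $k$ with $0 \le g \odot h^{\odot(-1)} \oplus 0 \le f^{\odot k}$ and $0 \le g^{\odot(-1)} \odot h \oplus 0 \le f^{\odot k}$. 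Multiplying through by $f_2^{\odot k}$ turns these into the polynomial inequalities $h \odot f_2^{\odot k} \le (g\oplus h)\odot f_2^{\odot k} \le h \odot f_1^{\odot k}$ and the analogue with $g$, and then the squeeze property of congruences on idempotent semirings (\cite[Proposition 2.2(iii)]{Joo=Mincheva1}: if $a \le b \le c$ and $(a,c)$ lies in a congruence, so does $(a,b)$), applied to the obvious pairs $(h\odot f_2^{\odot k}, h\odot f_1^{\odot k})$ and $(g\odot f_2^{\odot k}, g\odot f_1^{\odot k})$ in $\langle(f_1,f_2)\rangle$, yields $(g\odot f_2^{\odot k}, h\odot f_2^{\odot k}) \in \langle(f_1,f_2)\rangle$ by transitivity through $(g\oplus h)\odot f_2^{\odot k}$. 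Without this inequality-plus-squeeze mechanism (or an equivalent substitute), your argument does not close.
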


Here $\boldsymbol{E}(V)_1$ is the congruence on the tropical polynomial function semiring $\overline{\boldsymbol{T}[X_1, \ldots, X_n]}$ associated with $V$ and $\langle (f_1, f_2) \rangle$ stands for the congruence on $\overline{\boldsymbol{T}[X_1, \ldots, X_n]}$ generated by $(f_1, f_2)$.

Theorem \ref{thm:main} also gives a characterization of rational function semifields $\operatorname{Rat}(\Gamma)$ of tropical curves $\Gamma$ (Corollary \ref{cor9}).
Note here that (parts of) \cite{JuAe2}, \cite{JuAe3}, \cite{JuAe4} and \cite{JuAe5} were devoted to obtain this characterization.
With this characterization, we know that if the rational map $\phi$ to $\boldsymbol{R}^n$ induced by the complete linear system associated with an effective divisor on $\Gamma$ is injective, then $\overline{\boldsymbol{T}(X_1, \ldots, X_n)} / \boldsymbol{E}(\operatorname{Im}(\phi))$ is isomorphic to $\operatorname{Rat}(\Gamma)$ as a $\boldsymbol{T}$-algebra (and hence as a semifield over $\boldsymbol{T}$) (Corollary \ref{cor11}), where $\operatorname{Im}(\phi)$ stands for the image of $\phi$.
This means that $\phi$ has all imformation of $\operatorname{Rat}(\Gamma)$ as a $\boldsymbol{T}$-algebra and all geometric imformation $\Gamma$ as a tropical curve by \cite[Corollary 1.2]{JuAe5}, intuitively, $\phi$ is a ``good'' embedding.

Theorem \ref{thm:main} is a tropical analogue of the Noether's property.
However there exists no conclusive definition of (abstract) tropical varieties, Theorem \ref{thm:main} ensures that arbitrary ``tropical" geometric object $V$ in $\boldsymbol{R}^n$ in the broadest sense is defined by a finite system of equalities of tropical rational functions and defines the finitely generated semifield $\overline{\boldsymbol{T}(X_1, \ldots, X_n)} / \boldsymbol{E}(V)$ over $\boldsymbol{T}$ (cf. the Structure Theorem for Tropical Geometry \cite[Theorem 3.3.5]{Maclagan=Sturmfels}).
We expect the current paper to become a new step toward the construction of algebraic foundation for tropical geometry.

Note here that a part of Theorem \ref{thm:main} is indicated in the introduction of \cite{Grigoriev}.

The rest of this paper is organized as follows.
In Section \ref{section2}, we give the definitions of polyhedral cones and polyhedral sets, semirings, algebras and semifields, congruences, tropical rational function semifields and congruence varieties, tropical curves, rational functions, morphisms between tropical curves, and linear systems.
In Section \ref{section3}, which is our main section, we prove the above assertions.

\section*{Acknowledgements}
The author thanks her supervisor Masanori Kobayashi, Kazuhiko Yamaki, Yasuhito Nakajima, Keita Goto, Masao Oi, Yuki Koto, Taro Sogabe and Takaaki Ito for their helpful comments and discussions.

\section{Preliminaries}
	\label{section2}

In this section, we recall several definitions which we need later.
We refer to \cite{Golan} (resp. \cite{Maclagan=Sturmfels}) for an introduction to the theory of semirings (resp. tropical geometry and polyhedral geometry) and employ definitions in \cite{Bertram=Easton}, \cite{JuAe4} and \cite{Joo=Mincheva1} (resp. \cite{JuAe1}, \cite{JuAe2} and \cite{Maclagan=Sturmfels}) with slight modifications partially.
Also, we employ Chan's definition of morphisms between tropical curves in \cite{Chan}.

\subsection{Polyhedral cones and polyhedral sets}
	\label{subsection2.1}

For any subset $S$ of $\boldsymbol{R}^n$, the \textit{polar cone} $S^{\ast}$ of $S$ is defined by the set consisting of all points $x$ in $\boldsymbol{R}^n$ such that $x \cdot y \le 0$ holds for any $y \in S$, where $\cdot$ denotes the standard inner product.
It is well-known that $S^{\ast}$ is a closed convex cone, i.e., $S^{\ast}$ is closed and for any $x, y \in S^{\ast}$ and $r \in \boldsymbol{R}_{\ge 0}$, both the segment $\overline{xy}$ and the point $rx$ are in $S^{\ast}$, and $(K^{\ast})^{\ast} = K$ holds for any closed convex cone $K$.
The following theorem is well-known and one of the keys to our main theorem:

\begin{thm}
	\label{thm:decomposition1}
Let $K$ be a nonempty closed convex cone.
For any point $x$ of $\boldsymbol{R}^n$, there exist unique points $y$ of $K$ and $z$ of $K^{\ast}$ satisfying $x = y + z$ and $y \cdot z = 0$.
\end{thm}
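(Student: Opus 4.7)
The plan is to derive this classical Moreau-type decomposition from the nearest-point projection onto a nonempty closed convex set. Since $K$ is a nonempty closed convex subset of $\boldsymbol{R}^n$, I would invoke the standard fact that the infimum of $\|x - w\|$ over $w \in K$ is attained at a unique point $P_K(x)$, characterized by the variational inequality $(x - P_K(x)) \cdot (w - P_K(x)) \le 0$ for every $w \in K$. Setting $y := P_K(x)$ and $z := x - y$, the remaining tasks are to verify $z \in K^{\ast}$, to check $y \cdot z = 0$, and to establish uniqueness of the pair $(y,z)$.

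For the orthogonality, I would exploit the fact that $K$ is a cone: substituting the admissible choices $w = 0 \in K$ and $w = 2y \in K$ into the variational inequality yields $z \cdot y \ge 0$ and $z \cdot y \le 0$, respectively, hence $y \cdot z = 0$. Plugging this back into the variational inequality reduces it to $z \cdot w \le 0$ for every $w \in K$, which is precisely the membership $z \in K^{\ast}$.

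For uniqueness, I would start from an arbitrary decomposition $x = y' + z'$ with $y' \in K$, $z' \in K^{\ast}$, and $y' \cdot z' = 0$, and show that $y'$ must realize the nearest-point projection. For any $w \in K$, expand
$$\|x - w\|^2 = \|(y' - w) + z'\|^2 = \|y' - w\|^2 + 2(y' - w) \cdot z' + \|z'\|^2 = \|y' - w\|^2 - 2\, w \cdot z' + \|z'\|^2,$$
using $y' \cdot z' = 0$ in the last step. Since $w \in K$ and $z' \in K^{\ast}$, the term $-2\, w \cdot z'$ is nonnegative, so $\|x - w\|^2 \ge \|z'\|^2 = \|x - y'\|^2$. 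Thus $y'$ attains the minimum of $\|x - \cdot\|$ on $K$ and must equal $P_K(x) = y$; then $z' = x - y' = z$.

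I do not expect a genuine obstacle here: the result is standard once the projection theorem is available. The only subtlety is to use the cone structure of $K$ in two essential places, namely to insert the test points $w = 0$ and $w = 2y$ into the variational inequality in order to force $y \cdot z = 0$, and to exploit $z' \in K^{\ast}$ to bound the cross-term $w \cdot z'$ in the uniqueness computation. Without the conical hypothesis, the same argument still produces a nearest-point projection but does not yield the orthogonality $y \cdot z = 0$.
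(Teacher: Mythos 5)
Your proof is correct and complete: the projection onto the nonempty closed convex set exists and is unique, the cone structure (which guarantees $0\in K$ and $2y\in K$) is used exactly where needed to extract $y\cdot z=0$ and $z\in K^{\ast}$ from the variational inequality, and the uniqueness computation correctly identifies any admissible $y'$ with the projection. The paper itself states this result as well-known and gives no proof, so there is nothing to compare against; your argument is the standard Moreau-decomposition proof and would serve as a valid justification.
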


Note that in Theorem \ref{thm:decomposition1}, $\operatorname{dist}( x, y ) = \operatorname{dist} ( x, K )$ holds, where $\operatorname{dist}(x, y)$ (resp. $\operatorname{dist} ( x, K )$) denotes the distance between $x$ and $y$ (resp. $K$).
For any subset $S$ of $\boldsymbol{R}^n$, let $\operatorname{Cone}(S)$ stand for the convex cone hull of $S$, i.e., the set of all points $x$ of $\boldsymbol{R}^n$ such that there exist a finite number of nonnegative numbers $t_1, \ldots, t_k \ge 0$ and $k$ elements $s_1, \ldots, s_k$ of $S$ satisfying $x = t_1 s_1 + \cdots + t_k s_k$.
A \textit{polyhedral cone} is the convex cone hull of a finite number of points of $\boldsymbol{R}^n$, which is closed.
If $K$ is a polyhedral cone, then so is $K^{\ast}$.
Since $(K^{\ast})^{\ast} = K$, a polyhedral cone is the solution set of a system of (a finite number of) homogeneous linear inequalities.
For $a \in \boldsymbol{R}^n$ and $b \in \boldsymbol{R}$, if $K$ is contained in $\{ x \in \boldsymbol{R}^n \,|\, a \cdot x + b \ge 0 \}$, then $K \cap \{ x \in \boldsymbol{R}^n \,|\, a \cdot x + b = 0 \}$ is called a \textit{face} of $K$.
A \textit{proper face} of $K$ is a face other than $\varnothing, K$.
For two subsets $S_1, S_2$ of $\boldsymbol{R}^n$, let $S_1 + S_2$ denote the \textit{Minkowski sum} of $S_1$ and $S_2$, i.e., the set $\{ s_1 + s_2 \,|\, s_1 \in S_1, s_2 \in S_2 \}$.
For a subset $S$ of $\boldsymbol{R}^n$, let $\operatorname{Int}(S)$ and $\partial (S)$ stand for the relative interior of $S$ and the boundary of $S$, respectively.
Theorem \ref{thm:decomposition1} gives a decomposition of $\boldsymbol{R}^n$ in the case that $K$ is a nonempty polyhedral cone:

\begin{cor}
	\label{cor:decomposition2}
For a finite number of points $x_1, \ldots, x_m$ of $\boldsymbol{R}^n \setminus \{ (0, \ldots, 0) \}$, let $K$ be the polyhedral cone $\{ y \in \boldsymbol{R}^n \,|\, \forall i, x_i \cdot y \le 0 \}$.
If $K$ is nonempty, then
\begin{align*}
\boldsymbol{R}^n &= K + K^{\ast}\\
&= K + \operatorname{Cone} \{ x_1, \ldots, x_m \}\\
&= \operatorname{Int}(K) \sqcup \bigsqcup_{H : \text{a proper face of }K} ( \operatorname{Int}(H) + \operatorname{Cone}\{ x_i \,|\, x_i \perp H \} )\\
&= \operatorname{Int}(K) \sqcup \bigsqcup_{y \in \partial (K)} (\{ y \} + \operatorname{Cone} \{ x_i \,|\, x_i \cdot y = 0 \}).
\end{align*}
\end{cor}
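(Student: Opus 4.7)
The plan is to use Theorem~\ref{thm:decomposition1} as the engine and then refine it by stratifying the $K$-coordinate. I would first identify $K^{\ast}$ concretely as $C := \operatorname{Cone}\{x_1,\ldots,x_m\}$. One checks by linearity that $C^{\ast} = \{y \in \boldsymbol{R}^n : y \cdot x_i \le 0 \text{ for all } i\} = K$, and then the bipolar identity $(K')^{\ast\ast} = K'$ for a closed convex cone $K'$ (stated just before Theorem~\ref{thm:decomposition1}) gives $K^{\ast} = C^{\ast\ast} = C$. The first two equalities then fall out immediately: $\boldsymbol{R}^n = K + K^{\ast}$ is Theorem~\ref{thm:decomposition1} applied to the nonempty closed convex cone $K$, and $K + K^{\ast} = K + \operatorname{Cone}\{x_1,\ldots,x_m\}$ is substitution.

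For the third (stratified) equality I would exploit the canonical partition $K = \bigsqcup_H \operatorname{Int}(H)$ over faces $H$ of $K$ and assign each $x \in \boldsymbol{R}^n$ to the face whose relative interior contains the $K$-part of its Theorem~\ref{thm:decomposition1} decomposition $x = y + z$. Expanding $z = \sum_i t_i x_i$ with $t_i \ge 0$, the identity $\sum_i t_i (y \cdot x_i) = y \cdot z = 0$ with each summand nonpositive forces $t_i(y \cdot x_i) = 0$ term by term, so $y \cdot x_i = 0$ whenever $t_i > 0$. The central step is to upgrade this pointwise incidence to face-wise orthogonality: for any $w \in H$, the point $y + \varepsilon(y - w)$ lies in $H$ for all sufficiently small $\varepsilon > 0$ by the relative-interior hypothesis on $y$, so it lies in $K$ and gives $(1+\varepsilon)(y \cdot x_i) - \varepsilon(w \cdot x_i) \le 0$; combining $y \cdot x_i = 0$ with this and the inequality $w \cdot x_i \le 0$ (from $w \in K$) yields $w \cdot x_i = 0$, i.e.\ $x_i \perp H$. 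Hence $z \in \operatorname{Cone}\{x_i : x_i \perp H\}$. Conversely, for any $y \in \operatorname{Int}(H)$ and $z \in \operatorname{Cone}\{x_i : x_i \perp H\}$, one has $y \in K$, $z \in K^{\ast}$, and $y \cdot z = 0$, so $y + z$ is already the canonical Theorem~\ref{thm:decomposition1} decomposition. The uniqueness clause of Theorem~\ref{thm:decomposition1} then delivers both the set equality and the pairwise disjointness of the strata $\operatorname{Int}(H) + \operatorname{Cone}\{x_i : x_i \perp H\}$.

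For the final equality, the bridging observation is that for a proper face $H$ and any $y \in \operatorname{Int}(H)$ the two index sets $\{i : x_i \perp H\}$ and $\{i : x_i \cdot y = 0\}$ coincide; one containment is tautological and the other is precisely the relative-interior upgrading from the previous paragraph. Ungrouping $\bigsqcup_{H \text{ proper}} \operatorname{Int}(H) = \partial(K)$ then converts $\bigsqcup_H (\operatorname{Int}(H) + \operatorname{Cone}\{x_i : x_i \perp H\})$ into $\bigsqcup_{y \in \partial(K)} (\{y\} + \operatorname{Cone}\{x_i : x_i \cdot y = 0\})$, and disjointness at this finer granularity is once more immediate from the uniqueness in Theorem~\ref{thm:decomposition1}.

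The main technical obstacle I expect is exactly the relative-interior perturbation that promotes the single incidence $y \cdot x_i = 0$ to the face-wise identity $x_i \perp H$; everything else is a routine combination of Theorem~\ref{thm:decomposition1}, the bipolar identity, and the standard fact that relative interiors of faces partition a polyhedral cone.
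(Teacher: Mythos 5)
The paper states Corollary \ref{cor:decomposition2} with no proof at all, presenting it as an immediate consequence of Theorem \ref{thm:decomposition1}; so there is no argument of the author's to measure yours against, and what you have written is precisely the derivation being taken for granted. Your steps are correct: the identification $K^{\ast}=\operatorname{Cone}\{x_1,\ldots,x_m\}$ via the bipolar identity, the reduction of the first two equalities to Theorem \ref{thm:decomposition1}, the observation that $0=y\cdot z=\sum_i t_i\,(y\cdot x_i)$ with nonpositive summands forces $x_i\cdot y=0$ whenever $t_i>0$, the relative-interior perturbation $y+\varepsilon(y-w)$ that upgrades the incidence $x_i\cdot y=0$ to $x_i\perp H$, and the appeal to the uniqueness clause of Theorem \ref{thm:decomposition1} for disjointness. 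This is the natural route and, in my view, the intended one.

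One loose end should be recorded. Applied to the face $H=K$ itself, your stratification yields the stratum $\operatorname{Int}(K)+\operatorname{Cone}\{x_i\,|\,x_i\perp K\}$, whereas the corollary's third line lists the bare $\operatorname{Int}(K)$; these agree exactly when no $x_i$ is orthogonal to all of $K$ (for instance whenever $K$ is $n$-dimensional), and you never verify this. The discrepancy is real: for $x_1=(1,0)$ and $x_2=(-1,0)$ in $\boldsymbol{R}^2$, the cone $K$ is the $y$-axis, which equals its own relative interior and has no proper faces, so the third and fourth displayed expressions collapse to $K\ne\boldsymbol{R}^2$. This is a defect of the statement rather than of your reasoning --- the later applications (the proof of Corollary \ref{cor:decomposition3} and Lemma \ref{lem:distance}) only use the first two equalities together with the nearest-point interpretation of the decomposition --- but your proof should either add the nondegeneracy hypothesis under which the top stratum is just $\operatorname{Int}(K)$, or state the general form of that stratum as $\operatorname{Int}(K)+\operatorname{Cone}\{x_i\,|\,x_i\perp K\}$. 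The same caveat governs reading $\partial(K)$ in the last line as the relative boundary $K\setminus\operatorname{Int}(K)$.
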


We extend Corollary \ref{cor:decomposition2} to the case of polyhedral sets.
Note that a \textit{polyhedral set} is the solution set of a system of (a finite number of) linear inequalities.
A polyhedral set is \textit{$\boldsymbol{R}$-rational} if all coefficients of its system of (a finite number of) linear inequalities except for constant terms are rational.

\begin{cor}
	\label{cor:decomposition3}
For a finite number of points $x_1, \ldots, x_m$ of $\boldsymbol{R}^n \setminus \{ (0, \ldots, 0) \}$ and real numbers $a_1, \ldots, a_m$, let $K$ be the polyhedral set $\{ y \in \boldsymbol{R}^n \,|\, \forall i, x_i \cdot y + a_i \le 0 \}$.
If $K$ is nonempty, then
\begin{align*}
\boldsymbol{R}^n = \operatorname{Int}(K) \sqcup \bigsqcup_{y \in \partial (K)} (\{ y \} + \operatorname{Cone} \{ x_i \,|\, x_i \cdot y + a_i = 0 \}).
\end{align*}
\end{cor}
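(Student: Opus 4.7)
The plan is to use the nearest-point projection $\pi_K\colon \boldsymbol{R}^n \to K$, which is well-defined and unique since $K$ is a nonempty closed convex set. For each $x \in \boldsymbol{R}^n$, I take $y := \pi_K(x)$ and aim to show that $x - y$ lies in $\operatorname{Cone}\{ x_i \mid x_i \cdot y + a_i = 0 \}$. The core analytic input is the standard variational characterization of projection onto a closed convex set: $y = \pi_K(x)$ if and only if $y \in K$ and $(x - y) \cdot (z - y) \le 0$ for every $z \in K$; equivalently, $x - y$ lies in the normal cone $N_K(y) := \{ v \in \boldsymbol{R}^n \mid v \cdot (z - y) \le 0 \text{ for all } z \in K \}$.

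For the coverage part, I would distinguish cases. If $x \in \operatorname{Int}(K)$, there is nothing to check; if $x \in \partial (K)$, then $y = \pi_K(x) = x$ and $x = y + 0$ lies in the piece associated to $y$. If $x \notin K$, then $y$ must lie on $\partial (K)$ (otherwise a small displacement of $y$ toward $x$ would decrease the distance), and by the variational characterization $x - y \in N_K(y)$. The crucial identification is then
\[
N_K(y) = \operatorname{Cone}\{ x_i \mid i \in I(y) \}, \qquad I(y) := \{ i \mid x_i \cdot y + a_i = 0 \},
\]
which I would deduce from Corollary \ref{cor:decomposition2} as follows: the tangent cone $T_K(y) := \{ v \in \boldsymbol{R}^n \mid x_i \cdot v \le 0 \text{ for every } i \in I(y) \}$ is the polar $\operatorname{Cone}\{ x_i \mid i \in I(y) \}^{\ast}$, so the bipolar identity yields $T_K(y)^{\ast} = \operatorname{Cone}\{ x_i \mid i \in I(y) \}$, and convexity of $K$ gives $N_K(y) = T_K(y)^{\ast}$.

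For disjointness, suppose $x = y + v$ with $y \in \partial (K)$ and $v = \sum_j t_j x_{i_j}$ for some $t_j \ge 0$ and $i_j \in I(y)$. For every $z \in K$, a direct expansion gives
\[
| x - z |^2 - | x - y |^2 - | y - z |^2 = 2 v \cdot (y - z) = -2 \sum_j t_j ( x_{i_j} \cdot z + a_{i_j} ) \ge 0,
\]
so $y$ is the unique nearest point of $K$ to $x$. Hence $y$ is determined by $x$, giving pairwise disjointness of the pieces indexed by $y \in \partial (K)$ and disjointness from $\operatorname{Int}(K)$ (since $x \in \operatorname{Int}(K)$ would force $\pi_K(x) = x \in \operatorname{Int}(K)$, contradicting $y \in \partial (K)$). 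The main obstacle I anticipate is the polyhedral identification of $N_K(y)$ with the cone of active constraint vectors; Corollary \ref{cor:decomposition2} is exactly what supplies this, after which the remainder is a routine first-order variational computation.
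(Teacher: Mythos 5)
Your proof is correct, but it takes a genuinely different route from the paper's. The paper proves only the coverage statement, by induction on the number $m$ of defining inequalities: for $z$ with nearest point $w \in K$, if some constraint is inactive at $w$, a segment argument shows that dropping it does not change the nearest point, so the inductive hypothesis applies; if all constraints are active, then $K - w$ is exactly the polyhedral cone of Corollary \ref{cor:decomposition2} and the decomposition of Theorem \ref{thm:decomposition1} finishes. You instead identify the normal cone $N_K(y)$ directly with $\operatorname{Cone}\{ x_i \mid i \in I(y) \}$ via the polar of the linearization cone and the bipolar identity; this is the standard convex-analysis argument and it replaces the paper's case analysis in one stroke. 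Two remarks. First, the identity $N_K(y) = T_K(y)^{\ast}$ is not quite ``just convexity'': the nontrivial inclusion $N_K(y) \subseteq T_K(y)^{\ast}$ requires observing that any $w$ with $x_i \cdot w \le 0$ for all active $i$ is a feasible direction, i.e.\ $y + \varepsilon w \in K$ for small $\varepsilon > 0$ because the inactive constraints are strictly satisfied at $y$; and the bipolar step needs that a finitely generated cone is closed. Both facts are available in Subsection \ref{subsection2.1}, but your attribution of this identification to Corollary \ref{cor:decomposition2} is misplaced --- what you actually use is the polar/bipolar machinery, not that corollary. Second, your explicit disjointness computation (showing that $x = y + v$ with $v$ in the active cone forces $y = \pi_K(x)$, hence $y$ is determined by $x$) is a genuine plus: the paper leaves this implicit in its opening reduction ``it is enough to show\dots''.
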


\begin{proof}
It is enough to show that for any $z \in \boldsymbol{R}^n$ and the unique point $w$ of $K$ such that $\operatorname{dist}(z, w) = \operatorname{dist}(z, K)$ holds, $z$ is in $\{ w \} + \operatorname{Cone}\{ x_i \,|\, x_i \cdot w + a_i = 0 \}$.
We use the induction on $m$.
It is true for $m = 1$.
We assume that it is true for $m = k$.
We consider the case of $m = k + 1$.
For any $z \in \boldsymbol{R}^n$, let $w$ be the unique point of $K$ such that $\operatorname{dist}(z, w) = \operatorname{dist}(z, K)$ holds.
Assume that the cardinality of $\{ i \,|\, x_i \cdot w + a_i = 0 \}$ is less than $k + 1$, say $k + 1 \not\in \{ i \,|\, x_i \cdot w + a_i = 0 \}$.
Then $w$ is a point of the polyhedral set $L := \{ y \in \boldsymbol{R}^n \,|\, i = 1, \ldots, k, x_i \cdot y + a_i \le 0 \}$ and $\operatorname{dist}(z, w) = \operatorname{dist}(z, K) \ge \operatorname{dist}(z, L)$ and $x_{k + 1} \cdot w + a_{k + 1} < 0$.
There exists a unique point $w^{\prime}$ of $L$ such that $\operatorname{dist}(z, w^{\prime}) = \operatorname{dist}(z, L)$.
If $w \not= w^{\prime}$, then $\operatorname{dist}(z, w) > \operatorname{dist}(z, w^{\prime})$.
In this case, if $x_{k + 1} \cdot w^{\prime} + a_{k + 1} \le 0$, then $w^{\prime} \in K$, and hence $\operatorname{dist}(z, w^{\prime}) \ge \operatorname{dist}(z, K) = \operatorname{dist}(z, w)$, which is a contradiction.
Hence we have $x_{k + 1} \cdot w^{\prime} + a_{k + 1} > 0$.
Since both $w$ and $w^{\prime}$ are in $L$ and $L$ is convex, the segment $\overline{ww^{\prime}}$ is contained in $L$.
Thus there exists a unique point $w^{\prime \prime}$ of $\overline{ww^{\prime}}$ such that $x_{k + 1} \cdot w^{\prime \prime} + a_{k + 1} = 0$.
This $w^{\prime \prime}$ is a point of $K$.
As $\operatorname{dist}(z, w^{\prime}) < \operatorname{dist}(z, w)$, we have $\operatorname{dist}(z, w^{\prime \prime}) < \operatorname{dist}(z, w) = \operatorname{dist}(z, K)$, which is a contradiction.
Therefore $w^{\prime}$ must coincide with $w$.
By the assumption of the induction, we have $z \in \{ w \} + \operatorname{Cone} \{ x_i \,|\, x_i \cdot w + a_i = 0, i \not= k + 1 \} \subset \{ w \} + \operatorname{Cone} \{ x_i \,|\, x_i \cdot w + a_i = 0 \}$.
Assume that the cardinality of $\{ i \,|\, x_i \cdot w + a_i = 0 \}$ is $k + 1$.
Since $m = k + 1$, we have $K = \{ w \} + \{ y \in \boldsymbol{R}^n \,|\, \forall i, x_i \cdot y \le 0 \}$.
By Corollary \ref{cor:decomposition2}, $z - w \in \operatorname{Cone}\{ x_1, \ldots, x_{k + 1} \}$, and thus $z \in \{ w \} + \operatorname{Cone}\{ x_1, \ldots, x_{k + 1} \}$.
\end{proof}

A convex cone in $\boldsymbol{R}^n$ is \textit{strongly convex} if it contains no lines through the origin $O$ of $\boldsymbol{R}^n$.
For a polyhedral cone $K$ in $\boldsymbol{R}^n$, it is well-known that $K$ is strongly convex if and only if $K^{\ast}$ is $n$-dimensional and that if $K$ is not strongly convex, then there exist $\{ i_1, \ldots, i_s \} \subset \{ 1, \ldots, m \}$ and $c_{i_1}, \ldots, c_{i_s} > 0$ such that $\sum_{j = 1}^s c_{i_j}x_{i_j} = O$ holds.

\begin{lemma}
	\label{lem:distance}
For a finite number of points $x_1, \ldots, x_m$ of $\boldsymbol{R}^n \setminus \{ (0, \ldots, 0) \}$, there exists $k > 0$ such that $\sqrt{y \cdot y} \le k \operatorname{max} \{ x_i \cdot y \,|\, i = 1, \ldots, m \}$ holds for any point $y$ of $\operatorname{Cone}\{ x_1, \ldots, x_m \}$.
\end{lemma}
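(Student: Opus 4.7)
The plan is to use a compactness argument. The inequality is trivial for $y = (0,\ldots,0)$, so I only need to treat nonzero $y \in \operatorname{Cone}\{x_1,\ldots,x_m\}$. By homogeneity, if the inequality $\max_i x_i \cdot y \ge c \sqrt{y \cdot y}$ holds with some constant $c > 0$ on the compact slice $S := \{ y \in \operatorname{Cone}\{x_1,\ldots,x_m\} \mid \sqrt{y \cdot y} = 1 \}$, then it holds on the whole cone, and one may take $k := 1/c$. So the task reduces to proving that the continuous function $f(y) := \max_i x_i \cdot y$ has a strictly positive minimum on $S$.

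First I would verify that $S$ is compact: it is the intersection of the closed set $\operatorname{Cone}\{x_1,\ldots,x_m\}$ (a polyhedral cone, in particular closed) with the unit sphere. Since $f$ is the maximum of finitely many linear functions, it is continuous on $S$, so it attains its minimum, say at some $y_0 \in S$.

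The key step is to show $f(y_0) > 0$. Take any nonzero $y \in \operatorname{Cone}\{x_1,\ldots,x_m\}$ and write $y = \sum_{i=1}^m t_i x_i$ with $t_i \ge 0$. Then
\begin{equation*}
y \cdot y = \sum_{i=1}^m t_i (x_i \cdot y) \le \Big(\sum_{i=1}^m t_i\Big) \max_i x_i \cdot y.
\end{equation*}
If $f(y) = \max_i x_i \cdot y \le 0$, the right-hand side is $\le 0$, forcing $y \cdot y = 0$, i.e.\ $y = (0,\ldots,0)$, a contradiction. Hence $f > 0$ on $S$, and in particular $c := f(y_0) > 0$. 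Setting $k := 1/c$ and using homogeneity (for $y \neq 0$, apply the bound to $y/\sqrt{y\cdot y} \in S$) yields the desired inequality for every $y \in \operatorname{Cone}\{x_1,\ldots,x_m\}$.

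The only nontrivial point is ruling out $f \equiv 0$ on some nonzero ray of the cone, and the short pairing computation above handles it cleanly; there is no real obstacle, the argument is essentially a one-line compactness plus homogeneity reduction.
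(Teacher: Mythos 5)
Your proof is correct, and it takes a genuinely different and more economical route than the paper's. The paper also argues by compactness, but on the slice $L = K \cap \partial(\{y\} + K^{\ast})$ for an interior point $y$ of $K$; since that slice is compact only when $K$ is strongly convex, the paper must first subdivide a non-strongly-convex cone into strongly convex subcones generated by subsets of $\{x_1, \ldots, x_m\}$, run the argument on each piece, and take the maximum of the resulting constants, and it obtains positivity of the minimum of $y \mapsto \max_i x_i \cdot y$ from the polar-cone fact $K \cap K^{\ast} = \{O\}$ via Theorem \ref{thm:decomposition1}. You instead slice by the unit sphere, which is compact for any closed cone, so no case distinction on strong convexity and no subdivision is needed; and your positivity step --- writing $y = \sum_i t_i x_i$ and computing $y \cdot y = \sum_i t_i (x_i \cdot y) \le \bigl(\sum_i t_i\bigr) \max_i x_i \cdot y$ --- is a direct, self-contained verification of the same underlying fact (note that $\max_i x_i \cdot y \le 0$ says exactly that $y \in K^{\ast}$, so your pairing computation is the statement $K \cap K^{\ast} = \{O\}$ in disguise). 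The one external ingredient you share with the paper is the closedness of a finitely generated cone, used to make $S$ compact; this is recorded in Subsection \ref{subsection2.1}. Net effect: your argument proves the same statement with less machinery.
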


\begin{proof}
Let $K$ be the polyhedral cone $\operatorname{Cone}\{ x_1, \ldots, x_m \}$ and $d : K \to \boldsymbol{R}_{\ge 0}; y \mapsto \sqrt{y \cdot y}$ and $d^{\prime} : K \to \boldsymbol{R}_{\ge 0}; y \mapsto \operatorname{max}\{ x_i \cdot y \,|\, i = 1, \ldots, m \}$.
Note that $d^{\prime}(y)$ must be nonnegative for any $y \in K$.
In fact, if $y \in K$ satisfies $x_i \cdot y < 0$ for $i = 1, \ldots, m$, then $y \in K^{\ast}$, and hence $y \in K \cap K^{\ast}$ and $y$ is not $O$.
However, by Theorem \ref{thm:decomposition1}, $K \cap K^{\ast}$ is consiting only of $O$, which is a contradiction.

Assume that $K$ is strongly convex.
Let $y$ be a point of $\operatorname{Int}(K)$.
By the assumption above, $L := K  \cap \partial (\{ y \} + K^{\ast})$ is a nonempty compact subset of $\boldsymbol{R}^n$.
Since the functions $d$ and $d^{\prime}$ are continuous, there exist the maximum value of $d$ on $L$ and the minimum value $D^{\prime}$ of $d^{\prime}$ on $L$.
As $y \in \operatorname{Int}(K)$, $L$ does not contain $O$, and hence $D^{\prime}$ is positive.
Hence there exists $k > 0$ such that $d(z) \le k d^{\prime}(z)$ holds for any $z \in L$.
For $y^{\prime} := ty$ with $t > 0$ and $z^{\prime} \in K \cap \partial ( \{ y^{\prime} \} + K^{\ast})$, since $\frac{1}{t} z^{\prime} \in L$, we have $d(z^{\prime}) = t d \left( \frac{1}{t} z^{\prime} \right) \le t k d^{\prime} \left( \frac{1}{t} z^{\prime} \right) = kd^{\prime}(z^{\prime})$.
As $d(O) = d^{\prime}(O) = 0$, we have the conclusion in this case.

Assume that $K$ is not strongly convex.
Then there exist $\{ i_1, \ldots, i_s \} \subset \{ 1, \ldots, m \}$ and $c_{i_1}, \ldots, c_{i_s} > 0$ such that $\sum_{j = 1}^s c_{i_j}x_{i_j} = O$ holds.
It is clearly true that $K \supset \bigcup_{j = 1}^s \operatorname{Cone}\{ x_l \,|\, l \not= i_j \}$.
Conversely, for any $y \in K$, there exist $t_1, \ldots, t_m \ge 0$ such that $y = \sum_{i = 1}^m t_i x_i$.
Without loss of generality, we can assume that $i_j = j$, $c_{i_j} = 1$ for $j = 1, \ldots, s$ and $\operatorname{min} \{ t_{i_j} \,|\, j = 1, \ldots, s \} = t_{i_1}$.
Then we have $y = \sum_{i = 1}^m t_i x_i = \sum_{i = 1}^s (t_i - t_1) x_i + \sum_{i = s + 1}^m t_i x_i \in \operatorname{Cone}\{ x_2, \ldots, x_m \}$.
Thus we have $K = \bigcup_{j = 1}^s \operatorname{Cone}\{ x_l \,|\, l \not= i_j \}$.
By repeating this process at most finitely many times, $K$ is subdivided into finitely many strongly convex polyhedral cones $H_j$ generated by subsets $X_{H_j}$ of $\{ x_1, \ldots, x_m \}$ for $j = 1, \ldots, p$.
By the previous argument, there exists $k_{H_j} > 0$ such that $d(y) \le k_{H_j} \operatorname{max} \{ x_i \cdot y \,|\, x_i \in X_{H_j} \}$ holds for any point $y$ of $H_j$.
Since $\operatorname{max}\{ x_i \cdot y \,|\, x_i \in X_{H_j} \} \le \operatorname{max}\{ x_i \cdot y \,|\, i = 1, \ldots, m \}$, for $k := \operatorname{max}\{ k_{H_j} \,|\, j = 1, \ldots, p \}$, the inequality $d \le kd^{\prime}$ holds on $K$, which completes the proof.
\end{proof}

A vector $\boldsymbol{v} \in \boldsymbol{R}^n$ is \textit{primitive} if all its components are integers and their greatest common divisor is one.
When $\boldsymbol{v}$ is primitive, for $\lambda \ge 0$, the \textit{lattice length} of $\lambda \boldsymbol{v}$ is defined as $\lambda$ (cf.~\cite[Subsection 2.1]{JuAe1}).

\subsection{Semirings, algebras and semifields}
	\label{subsection2.2}

In this paper, a \textit{semiring} $S$ is a commutative semiring with the absorbing neutral element $0_S$ for addition $+$ and the identity $1_S$ for multiplication $\cdot$.
If every nonzero element of a semiring $S$ is multiplicatively invertible and $0_S \not= 1_S$, then $S$ is called a \textit{semifield}.

A map $\varphi : S_1 \to S_2$ between semirings is a \textit{semiring homomorphism} if for any $x, y \in S_1$,
\begin{align*}
\varphi(x + y) = \varphi(x) + \varphi(y), \	\varphi(x \cdot y) = \varphi(x) \cdot \varphi(y), \	\varphi(0) = 0, \	\text{and}\	\varphi(1) = 1.
\end{align*}
Given a semiring homomorphism $\psi : S_1 \to S_2$, if both $S_1$ and $S_2$ are semifield and $\psi$ is injective, then $S_2$ is a \textit{semifield over $S_1$}.

Given a semiring homomorphism $\varphi : S_1 \to S_2$, we call the pair $(S_2, \varphi)$ (for short, $S_2$) a \textit{$S_1$-algebra}.
For a semiring $S_1$, a map $\psi : (S_2, \varphi) \to (S_2^{\prime}, \varphi^{\prime})$ between $S_1$-algebras is a \textit{$S_1$-algebra homomorphism} if $\psi$ is a semiring homomorphism and $\varphi^{\prime} = \psi \circ \varphi$.
When there is no confusion, we write $\psi : S_2 \to S_2^{\prime}$ simply.
A bijective $S_1$-algebra homomorphism $S_2 \to S_2^{\prime}$ is a \textit{$S_1$-algebra isomorphism}.
Then $S_2$ and $S_2^{\prime}$ are said to be \textit{isomorphic}.

The set $\boldsymbol{T} := \boldsymbol{R} \cup \{ -\infty \}$ with two tropical operations:
\begin{align*}
a \oplus b := \operatorname{max}\{ a, b \} \quad	\text{and} \quad a \odot b := a + b,
\end{align*}
where $a, b \in \boldsymbol{T}$ and $a + b$ stands for the usual sum of $a$ and $b$, becomes a semifield.
Here, for any $a \in \boldsymbol{T}$, we handle $-\infty$ as follows:
\begin{align*}
a \oplus (-\infty) = (-\infty) \oplus a = a \quad \text{and} \quad a \odot (-\infty) = (-\infty) \odot a = -\infty.
\end{align*}
This triple $(\boldsymbol{T}, \odot, \oplus)$ is called the \textit{tropical semifield}.
The subset $\boldsymbol{B} := \{ 0, -\infty \}$ of $\boldsymbol{T}$ becomes a semifield with tropical operations of $\boldsymbol{T}$ and is called the \textit{boolian semifield}.
A $\boldsymbol{B}$-algebra $S$ is said to be \textit{cancellative} if whenever $x \cdot y = x \cdot z$ for some $x, y, z \in S$, then either $x = 0_S$ or $y = z$.
If $S$ is cancellative, then we can define the semifield $Q(S)$ of fractions as within the case of integral domains.
In this case, the map $S \to Q(S); x \mapsto x/1_S$ becomes an injective $\boldsymbol{B}$-algebra homomorphism.
A $\boldsymbol{B}$-algebra $S$ has a natural partial order, i.e., $x \ge y$ if and only if $x + y = x$ for $x, y \in S$.

The \textit{tropical polynomials} are defined in the usual way and the set of all tropical polynomials in $n$-variables is denoted by $\boldsymbol{T}[X_1, \ldots, X_n]$.
It becomes a semiring with two tropical operations and is called the \textit{tropical polynomial semiring}.
Similarly, we define the \textit{tropical Laurent polynomials} and the \textit{tropical Laurent polynomial semiring} $\boldsymbol{T}\left[X_1^{\pm 1}, \ldots, X_n^{\pm 1}\right]$.
By \cite[Example 2.1]{JuAe4}, we know that tropical (Laurent) polynomial semirings are not cancellative.

For a commutative monoid $(M, +)$ is a \textit{$\boldsymbol{T}$-module} if $(M, +)$ is equipped with a map $\mu : \boldsymbol{T} \times M \to M$ satisfying

$(1)$ $\mu(t, m_1 + m_2) = \mu(t, m_1) + \mu(t, m_2)$,

$(2)$ $\mu(t_1 \oplus t_2, m) = \mu(t_1, m) + \mu(t_2, m)$,

$(3)$ $\mu(t_1 \odot t_2, m) = \mu(t_1, \mu(t_2, m))$,

$(4)$ $\mu(0, m) = m$, and 

$(5)$ $\mu(-\infty, m) = 0_M$,

\noindent
where $t, t_1, t_2 \in \boldsymbol{T}$, $m, m_1, m_2 \in M$ and $0_M$ stands for the unity of $M$ (see \cite[Subsection 2.1]{Giansiracusa=Giansiracusa1} for more details).

\subsection{Congruences}
	\label{subsection2.3}

A \textit{congruence} $E$ on a semiring $S$ is a subset of $S^2 = S \times S$ satisfying

$(1)$ for any $x \in S$, $(x, x) \in E$,

$(2)$ if $(x, y) \in E$, then $(y, x) \in E$,

$(3)$ if $(x, y) \in E$ and $(y, z) \in E$, then $(x, z) \in E$,

$(4)$ if $(x, y) \in E$ and $(z, w) \in E$, then $(x + z, y + w) \in E$, and

$(5)$ if $(x, y) \in E$ and $(z, w) \in E$, then $(x \cdot z, y \cdot w) \in E$.

The diagonal set of $S^2$ is denoted by $\Delta$ and called the \textit{trivial} congruence on $S$.
It is the unique smallest congruence on $S$.
The set $S^2$ becomes a semiring with the operations of $S$ and is a congruence on $S$.
This is called the \textit{improper} congruence on $S$.
Congruences other than the improper congruence are said to be \textit{proper}.
A congruence $E$ on $S$ is \textit{finitely generated} as a congruence if there exist $(x_1, y_1), \ldots, (x_m, y_m) \in E$ such that the smallest congruence $\langle (x_1, y_1), \ldots, (x_m, y_m) \rangle_S = \langle (x_1, y_1), \ldots, (x_m, y_m) \rangle$ on $S$ containing all $(x_1, y_1), \ldots, (x_m, y_m)$ is $E$.
Quotients by congruences can be considered in the usual sense and the quotient semiring of $S$ by the congruence $E$ is denoted by $S / E$.
Then the natural surjection $\pi_E : S \twoheadrightarrow S / E$ is a semiring homomorphism.

For a semiring homomorphism $\psi : S_1 \to S_2$, the \textit{kernel congruence} $\operatorname{Ker}(\psi)$ of $\psi$ is the congruence $\{ (x, y) \in S_1^2 \,|\, \psi(x) = \psi(y) \}$.
For semirings and congruences on them, the fundamental homomorphism theorem holds (\cite[Proposition 2.4.4]{Giansiracusa=Giansiracusa1}).
Then, for the above $\pi_E$, we have $\operatorname{Ker}(\pi_E) = E$.

\subsection{Tropical rational function semifields and congruence varieties}
	\label{subsection2.4}

For $V \subset \boldsymbol{T}^n$, the set $\boldsymbol{E}(V)_0 := \{ (f, g) \in \boldsymbol{T}[X_1, \ldots, X_n]^2 \,|\, \forall x \in V, f(x) = g(x) \}$ is a congruence on $\boldsymbol{T}[X_1, \ldots, X_n]$.
The semiring $\overline{\boldsymbol{T}[X_1, \ldots, X_n]} := \boldsymbol{T}[X_1, \ldots, X_n] / \boldsymbol{E}(\boldsymbol{T}^n)_0$ is cancellative by \cite[Theorem 1]{Bertram=Easton} and \cite[Proposition 5.5 and Theorem 4.14(v)]{Joo=Mincheva1}.
We call it the \textit{tropical polynomial function semiring}.
We call its semifield of fractions the \textit{tropical rational function semifield} and write it $\overline{\boldsymbol{T}(X_1, \ldots, X_n)}$.
In what follows, by abuse of notation, we write the image of $X_i$ in $\overline{\boldsymbol{T}(X_1, \ldots, X_n)}$ as $X_i$.
For a subset $E$ of $\overline{\boldsymbol{T}(X_1, \ldots, X_n)}^2$, we define $\boldsymbol{V}(E) := \{ x \in \boldsymbol{R}^n \,|\, \forall (f, g) \in E, f(x) = g(x)\}$ and call it the \textit{congruence variety} associated with $E$.
By defining that subsets of $\boldsymbol{R}^n$ are closed if they are of the form of congruence varieties, a topology on $\boldsymbol{R}^n$ is determined and coincides with the Euclidean topology on $\boldsymbol{R}^n$ by \cite[Proposition 3.9]{JuAe4}.
We also prepare the definition $\boldsymbol{V}(F)_0 := \{ x \in \boldsymbol{T}^n \,|\, \forall (f, g) \in F, f(x) = g(x) \}$ for a subset $F$ of $\boldsymbol{T}[X_1, \ldots, X_n]^2$ and the congruence $\boldsymbol{E}(V)_1 := \{ (f, g) \in \overline{\boldsymbol{T}[X_1, \ldots, X_n]}^2 \,|\, \forall x \in V, f(x) = g(x) \}$ on $\overline{\boldsymbol{T}[X_1, \ldots, X_n]}$ associated with $V \subset \boldsymbol{T}^n$.

\subsection{Tropical curves}
	\label{subsection2.5}

In this paper, a \textit{graph} is an unweighted, undirected, finite, connected nonempty multigraph that may have loops.
For a graph $G$, the set of vertices is denoted by $V(G)$ and the set of edges by $E(G)$.
A vertex $v$ of $G$ is a \textit{leaf end} if $v$ is incident to only one edge and this edge is not a loop.
A \textit{leaf edge} is an edge of $G$ incident to a leaf end.

A \textit{tropical curve} is the topological space associated with the pair $(G, l)$ of a graph $G$ and a function $l: E(G) \to {\boldsymbol{R}}_{>0} \cup \{\infty\}$, where $l$ can take the value $\infty$ only on leaf edges, by identifying each edge $e$ of $G$ with the closed interval $[0, l(e)]$.
The interval $[0, \infty]$ is the one-point compactification of the interval $[0, \infty)$.
We regard $[0, \infty]$ not just as a topological space but as an extended metric space.
The distance between $\infty$ and any other point is infinite.
When $l(e)=\infty$, the leaf end of $e$ must be identified with $\infty$.
If $E(G) = \{ e \}$ and $l(e)=\infty$, then we can identify either leaf ends of $e$ with $\infty$.
When a tropical curve $\Gamma$ is obtained from $(G, l)$, the pair $(G, l)$ is called a \textit{model} for $\Gamma$.
There are many possible models for $\Gamma$.
We frequently identify a vertex (resp. an edge) of $G$ with the corresponding point (resp. the corresponding closed subset) of $\Gamma$.
A model $(G, l)$ is \textit{loopless} if $G$ is loopless.
For a point $x$ of a tropical curve $\Gamma$, if $x$ is identified with $\infty$, then $x$ is called a \textit{point at infinity}, else, $x$ is called a \textit{finite point}.
Let $\Gamma_{\infty}$ denote the set of all points at infinity of $\Gamma$.
If $x$ is a finite point, then the \textit{valency} $\operatorname{val}(x)$ is the number of connected components of $U \setminus \{ x \}$ with any sufficiently small connected neighborhood $U$ of $x$; if $x$ is a point at infinity, then $\operatorname{val}(x) := 1$.
The \textit{genus} $g(\Gamma)$ of $\Gamma$ is the first Betti number of $\Gamma$, which coincides with $\# E(G) - \# V(G) + 1$ for any model $(G, l)$ for $\Gamma$.
If $g(\Gamma) = 0$, then $\Gamma$ is a \textit{tree}.
The word ``an edge of $\Gamma$" means an edge of $G$ for some model $(G, l)$ for $\Gamma$.
A \textit{subgraph} of a tropical curve is a closed subset of the tropical curve with a finite number of connected components.

\subsection{Rational functions}
	\label{subsection2.6}

Let $\Gamma$ be a tropical curve.
A continuous map $f : \Gamma \to \boldsymbol{R} \cup \{ \pm \infty \}$ is a \textit{rational function} on $\Gamma$ if $f$ is a constant function of $-\infty$ or a piecewise affine function with integer slopes, with a finite number of pieces and that can take the value $\pm \infty$ at only points at infinity.
For a finite point $x$ of $\Gamma$ and a rational function $f$ on $\Gamma$ other than $-\infty$, for one outgoing direction at $x$, the \textit{outgoing slope} of $f$ at $x$ is $\frac{f(y) - f(x)}{\operatorname{dist}(x, y)}$ with a finite point $y$ in a sufficiently small neighborhood of $x$ in the dirextion.
If $x$ is a point at infinity, then we regard the outgoing slope of $f$ at $x$ as the slope of $f$ from $y$ to $x$ times minus one, where $y$ is a finite point on the leaf edge incident to $x$ such that $f$ has a constant slope on the interval $(y, x)$.
In both cases, the definition of outgoing slope of $f$ at $x$ in the direction is independent of the choice of $y$.
Note when $\Gamma$ is a singleton, i.e., consists of only one point, for any rational function $f$ other than $-\infty$, we define $\operatorname{ord}_x(f) := 0$.
Let $\operatorname{ord}_x(f)$ denote the sum of outgoing slopes of $f$ at $x$.
Let $\operatorname{Rat}(\Gamma)$ denote the set of all rational functions on $\Gamma$.
For rational functions $f, g \in \operatorname{Rat}(\Gamma)$ and a point $x \in \Gamma \setminus \Gamma_{\infty}$, we define
\begin{align*}
(f \oplus g) (x) := \operatorname{max}\{f(x), g(x)\} \quad \text{and} \quad (f \odot g) (x) := f(x) + g(x).
\end{align*}
We extend $f \oplus g$ and $f \odot g$ to points at infinity to be continuous on the whole of $\Gamma$.
Then both are rational functions on $\Gamma$.
Note that for any $f \in \operatorname{Rat}(\Gamma)$, we have
\begin{align*}
f \oplus (-\infty) = (-\infty) \oplus f = f
\end{align*}
and
\begin{align*}
f \odot (-\infty) = (-\infty) \odot f = -\infty.
\end{align*}
Then $\operatorname{Rat}(\Gamma)$ becomes a semifield with these two operations.
Also, $\operatorname{Rat}(\Gamma)$ becomes a $\boldsymbol{T}$-algebra and a semifield over $\boldsymbol{T}$ with the natural inclusion $\boldsymbol{T} \hookrightarrow \operatorname{Rat}(\Gamma)$.
Note that for $f, g \in \operatorname{Rat}(\Gamma)$, $f = g$ means that $f(x) = g(x)$ for any $x \in \Gamma$.

\subsection{Morphisms between tropical curves}
		\label{subsection2.7}

Let $\varphi : \Gamma \to \Gamma^{\prime}$ be a continuous map between tropical curves.
Then $\varphi$ is a \textit{morphism} if there exist loopless models $(G, l)$ and $(G^{\prime}, l^{\prime})$ for $\Gamma$ and $\Gamma^{\prime}$, respectively, such that $\varphi$ can be regarded as a map $V(G) \cup E(G) \to V(G^{\prime}) \cup E(G^{\prime})$ satisfying $\varphi(V(G)) \subset V(G^{\prime})$ and for $e \in \varphi(E(G))$, there exists a nonnegative integer $\operatorname{deg}_e(\varphi)$ such that for any points $x, y$ of $e$, $\operatorname{dist}_{\varphi(e)}(\varphi (x), \varphi (y)) = \operatorname{deg}_e(\varphi) \cdot \operatorname{dist}_e(x, y)$, where $\operatorname{dist}_{\varphi(e)}(\varphi(x), \varphi(y))$ (resp.~$\operatorname{dist}_e(x, y)$) denotes the distance between $\varphi(x)$ and $\varphi(y)$ in $\varphi(e)$ (resp.~$x$ and $y$ in $e$).
A map between tropical curves $\varphi : \Gamma \to \Gamma^{\prime}$ is an \textit{isomorphism} if $\varphi$ is a bijective morphism and the inverse map $\varphi^{-1}$ of $\varphi$ is also  a morphism.
By this definition, $\varphi$ is an isomorphism if and only if $\varphi$ is continuous on the whole of $\Gamma$ and is a local isometry on $\Gamma \setminus \Gamma_{\infty}$.
Then $\Gamma$ is \textit{isomorphic} to $\Gamma^{\prime}$.
For a morphism $\varphi : \Gamma \to \Gamma^{\prime}$, the \textit{pull-back} $\varphi^{\ast} : \operatorname{Rat}(\Gamma^{\prime}) \to \operatorname{Rat}(\Gamma); f^{\prime} \mapsto f^{\prime} \circ \varphi$ is a $\boldsymbol{T}$-algebra homomorphism (cf.~\cite[Proposition 3.1]{JuAe3}).

\subsection{Linear systems}
	\label{subsection2.8}

Let $\Gamma$ be a tropical curve.
An element of the free abelian group $\operatorname{Div}(\Gamma)$ generated by all points of $\Gamma$ is called a \textit{divisor} on $\Gamma$.
For a divisor $D$ on $\Gamma$, its \textit{degree} $\operatorname{deg}(D)$ is defined by the sum of the coefficients over all points of $\Gamma$.
We write the coefficient at $x$ as $D(x)$.
A divisor $D$ on $\Gamma$ is said to be \textit{effective} if $D(x) \ge 0$ for any $x$ in $\Gamma$.
For any $f \in \operatorname{Rat}(\Gamma) \setminus \{ -\infty \}$, the sum $\operatorname{ord}_x(f)$ is zero for all $x$ but a finite number of points of $\Gamma$, and thus
\[
\operatorname{div}(f) := \sum_{x \in \Gamma}\operatorname{ord}_x(f) \cdot x
\]
is a divisor on $\Gamma$, which is called the \textit{principal divisor} defined by $f$.
Remark that the constant function of $-\infty$ on $\Gamma$ does not define a principal divisor.
Two divisors $D$ and $E$ on $\Gamma$ are said to be \textit{linearly equivalent} if $D - E$ is a principal divisor.
For a divisor $D$ on $\Gamma$, the \textit{complete linear system} $|D|$ is defined by the set of effective divisors on $\Gamma$ which are linearly equivalent to $D$.
For a divisor $D$ on $\Gamma$, let $R(D)$ be the set of rational functions $f \in \operatorname{Rat}(\Gamma) \setminus \{ -\infty \}$ such that $D + \operatorname{div}(f)$ is effective together with $-\infty$.
\cite[Corollary 9]{Haase=Musiker=Yu} states that $R(D)$ is finitely generated as a $\boldsymbol{T}$-module and we can choose up extremals from every finite generating set to be a minimal generating set.
Here an \textit{extremal} of $R(D)$ is an element $f$ of $R(D)$ such that $f = g_1 \oplus g_2$ implies $f = g_1$ or $f = g_2$ for $g_1, g_2 \in R(D)$.
A characterization of extremals is given by \cite[Lemma 5]{Haase=Musiker=Yu}.

\section{Main results}
	\label{section3}

In this section, we prove all assertions in Section \ref{section1}.
To do so, we first deal with finitely generated congruences on $\overline{\boldsymbol{T}(X_1, \ldots, X_n)}$.
For a point $x \in \boldsymbol{R}^n$, let $(x)_i$ denote the $i$-th component of $x$.

\begin{lemma}
	\label{lem1}
For $(f, g) \in \overline{\boldsymbol{T}(X_1, \ldots, X_n)}^2$, the congruence variety $\boldsymbol{V}((f, g))$ is a finite union of $\boldsymbol{R}$-rational polyhedral sets.
\end{lemma}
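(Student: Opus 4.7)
The plan is to clear denominators, reducing to the case where $f$ and $g$ are tropical polynomial functions, and then decompose the equality locus $\{f=g\}$ according to which affine piece attains the maximum on each side.

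First I would write $f = f_1 / f_2$ and $g = g_1 / g_2$ with $f_1, f_2, g_1, g_2 \in \overline{\boldsymbol{T}[X_1, \ldots, X_n]}$ and $f_2, g_2 \neq -\infty$. Any nonzero element of $\overline{\boldsymbol{T}[X_1, \ldots, X_n]}$ is represented by a tropical polynomial $\max_i(c_i + \langle l_i, x\rangle)$ with $c_i \in \boldsymbol{R}$ and $l_i \in \boldsymbol{Z}^n_{\geq 0}$, so it is finite on the whole of $\boldsymbol{R}^n$. Consequently, for any $x \in \boldsymbol{R}^n$, the equation $f(x) = g(x)$ is equivalent (by adding $f_2(x) + g_2(x)$ to both sides) to $(f_1 \odot g_2)(x) = (g_1 \odot f_2)(x)$. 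Hence $\boldsymbol{V}((f,g)) = \boldsymbol{V}((f_1 \odot g_2,\, g_1 \odot f_2))$, and it suffices to prove the lemma assuming $f, g \in \overline{\boldsymbol{T}[X_1, \ldots, X_n]}$.

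Next, I would choose tropical polynomial representatives
$$f(x) = \max_{i = 1, \ldots, p}\bigl(a_i + \langle m_i, x \rangle\bigr), \qquad g(x) = \max_{j = 1, \ldots, q}\bigl(b_j + \langle n_j, x \rangle\bigr),$$
with $a_i, b_j \in \boldsymbol{R}$ and $m_i, n_j \in \boldsymbol{Z}^n_{\geq 0}$. For each pair $(i, j) \in \{1, \ldots, p\} \times \{1, \ldots, q\}$ define $P_{i, j}$ to be the set of $x \in \boldsymbol{R}^n$ satisfying the system
$$a_i + \langle m_i, x\rangle \geq a_k + \langle m_k, x\rangle \text{ for all }k, \quad b_j + \langle n_j, x\rangle \geq b_l + \langle n_l, x\rangle \text{ for all }l, \quad a_i + \langle m_i, x\rangle = b_j + \langle n_j, x\rangle.$$
Each inequality and equality above has integer (hence rational) coefficients on $x$ and real constant term, so $P_{i, j}$ is an $\boldsymbol{R}$-rational polyhedral set. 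A direct case analysis, indexing a point $x$ of $\boldsymbol{V}((f, g))$ by any pair $(i, j)$ for which $a_i + \langle m_i, x\rangle$ attains $f(x)$ and $b_j + \langle n_j, x\rangle$ attains $g(x)$, yields $\boldsymbol{V}((f, g)) = \bigcup_{i, j} P_{i, j}$, which is the desired finite union.

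There is no serious obstacle here. The two points requiring care are the legitimacy of clearing denominators (handled by the observation that nonzero polynomial functions avoid $-\infty$ on $\boldsymbol{R}^n$) and the verification that the defining inequalities of $P_{i, j}$ have rational linear parts, which is automatic because $m_i - m_k$ and $n_j - n_l$ lie in $\boldsymbol{Z}^n$.
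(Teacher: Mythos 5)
Your proposal is correct and follows essentially the same route as the paper's proof: clear denominators to reduce to tropical polynomial functions (using that nonzero polynomial functions are finite on $\boldsymbol{R}^n$), then decompose the equality locus according to which affine term attains the maximum on each side, obtaining a finite union of sets cut out by linear inequalities with integer linear parts. The only cosmetic difference is that the paper disposes of the degenerate cases ($\boldsymbol{V}((f,g))$ empty or all of $\boldsymbol{R}^n$, e.g.\ when $f$ or $g$ is $-\infty$) explicitly at the outset, which you should also note before writing $f$ and $g$ as maxima over nonempty index sets.
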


\begin{proof}
If $\boldsymbol{V}((f, g))$ is empty or $\boldsymbol{R}^n$, then it is clear.

We assume that $\boldsymbol{V}((f, g))$ is not either empty or $\boldsymbol{R}^n$.
By assumption, $f, g \not= -\infty$ and $f \not= g$.
Since $f, g \in \overline{\boldsymbol{T}(X_1, \ldots, X_n)} \setminus \{ -\infty \}$, there exist $f_1, f_2, g_1, g_2 \in \overline{\boldsymbol{T}[X_1, \ldots, X_n]} \setminus \{ -\infty \}$ such that $f = f_1 \odot f_2^{\odot (-1)}$ and $g = g_1 \odot g_2^{\odot (-1)}$.
It is easy to check that for any $x \in \boldsymbol{R}^n$, $x \in \boldsymbol{V}((f, g))$ if and only if $x \in \boldsymbol{V}((f_1 \odot g_2, g_1 \odot f_2))$.
Hence we can assume that $f, g \in \overline{\boldsymbol{T}[X_1, \ldots, X_n]} \setminus \{ -\infty \}$.
Let $f^{\prime}$ and $g^{\prime}$ be representatives of $f$ and $g$ in $\boldsymbol{T}[X_1, \ldots, X_n] \setminus \{ -\infty \}$ resprectively.
By definition, we have $\boldsymbol{V}((f, g)) = \boldsymbol{V}((f^{\prime}, g^{\prime}))_0 \cap \boldsymbol{R}^n$.

Let $f^{\prime} = \bigoplus_{\boldsymbol{i} = (i_1, \ldots, i_n) \in A} a_{\boldsymbol{i}} \odot X_1^{\odot i_1} \odot \cdots \odot X_n^{\odot i_n}$ and $g^{\prime} = \bigoplus_{\boldsymbol{j} = (j_1, \ldots, j_n) \in B} b_{\boldsymbol{j}} \odot X_1^{\odot j_1} \odot \cdots \odot X_n^{\odot j_n}$ for some $a_{\boldsymbol{i}}, b_{\boldsymbol{j}} \in \boldsymbol{R}$ and some finite subsets $A, B \subset \left( \boldsymbol{Z}_{\ge 0} \right)^n$, where $\boldsymbol{i} \in A$ and $\boldsymbol{j} \in B$.
For any $x \in \boldsymbol{R}^n$, we have
\begin{alignat*}{2}
&&& x \in \boldsymbol{V}((f^{\prime}, g^{\prime}))_0\\
&\iff&& f^{\prime}(x) = g^{\prime}(x)\\
&\iff&& \text{there exist } \boldsymbol{i} = (i_1, \ldots, i_n) \in A, \boldsymbol{j} = (j_1, \ldots, j_n) \in B \text{ such that}\\
&&& \text{for any } \boldsymbol{k} = (k_1, \ldots, k_n) \in A, \boldsymbol{l} = (l_1, \ldots, l_n) \in B,\\
&&& \hspace{15pt} a_{\boldsymbol{i}} + i_1(x)_1 + \cdots + i_n(x)_n = b_{\boldsymbol{j}} + j_1(x)_1 + \cdots + j_n(x)_n\\
&&& \ge a_{\boldsymbol{k}} + k_1(x)_1 + \cdots + k_n(x)_n, b_{\boldsymbol{l}} + l_1(x)_1 + \cdots + l_n(x)_n\\
&\iff&& \text{there exist } \boldsymbol{i} = (i_1, \ldots, i_n) \in A, \boldsymbol{j} = (j_1, \ldots, j_n) \in B \text{ such that}\\
&&& \text{for any } \boldsymbol{k} = (k_1, \ldots, k_n) \in A, \boldsymbol{l} = (l_1, \ldots, l_n) \in B,\\
&&&(a_{\boldsymbol{i}} - b_{\boldsymbol{j}}) + (i_1 - j_1)(x)_1 + \cdots + (i_n - j_n)(x)_n = 0,\\ &&&(a_{\boldsymbol{i}} - a_{\boldsymbol{k}}) + (i_1 - k_1)(x)_1 + \cdots + (i_n - k_n)(x)_n \ge 0,\\
&&&(b_{\boldsymbol{j}} - b_{\boldsymbol{l}}) + (j_1 - l_1)(x)_1 + \cdots + (j_n - l_n)(x)_n \ge 0.
\end{alignat*}
Therefore we have
\begin{alignat*}{2}
&\boldsymbol{V}((f^{\prime}, g^{\prime}))_0 && \cap \boldsymbol{R}^n\\
=& \bigcup_{\substack{\boldsymbol{i} = (i_1, \ldots, i_n) \in A\\ \boldsymbol{j} = (j_1, \ldots, j_n) \in B}} \Bigg[ && \{ x \in \boldsymbol{R}^n \,|\, (a_{\boldsymbol{i}} - b_{\boldsymbol{j}}) + (i_1 - j_1)(x)_1 + \cdots + (i_n - j_n)(x)_n \ge 0 \}\\
&&& \cap \{ x \in \boldsymbol{R}^n \,|\, (a_{\boldsymbol{i}} - b_{\boldsymbol{j}}) + (i_1 - j_1)(x)_1 + \cdots + (i_n - j_n)(x)_n \le 0 \}\\
&&& \cap \bigcap_{\boldsymbol{k} = (k_1, \ldots, k_n) \in A} \{ x \in \boldsymbol{R}^n \,|\, (a_{\boldsymbol{i}} - a_{\boldsymbol{k}})+ (i_1 - k_1)(x)_1 + \cdots + (i_n - k_n)(x)_n \ge 0 \}\\
&&& \cap \bigcap_{\boldsymbol{l} = (l_1, \ldots, l_n) \in B} \{ x \in \boldsymbol{R}^n \,|\, (b_{\boldsymbol{j}} - b_{\boldsymbol{l}})+ (j_1 - l_1)(x)_1 + \cdots + (j_n - l_n)(x)_n \ge 0 \}\Bigg].
\end{alignat*}
This completes the proof.
\end{proof}

\begin{lemma}
	\label{lem2}
Let $E$ be a congruence on $\overline{\boldsymbol{T}(X_1, \ldots, X_n)}$.
For $f, g \in \overline{\boldsymbol{T}(X_1, \ldots, X_n)} \setminus \{ -\infty \}$, $(f, g) \in E$ if and only if $\left( f \odot g^{\odot (-1)} \oplus 0, 0 \right), \left(f^{\odot (-1)} \odot g \oplus 0, 0 \right) \in E$. 
\end{lemma}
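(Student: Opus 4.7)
The plan is to use only the five axioms of a congruence (reflexivity, symmetry, transitivity, compatibility with $\oplus$, compatibility with $\odot$) together with the fact that $f, g$ are units of the semifield so that $g^{\odot(-1)}, f^{\odot(-1)}$ and $h := f \odot g^{\odot(-1)}$ make sense, and $h \odot h^{\odot(-1)} = 0$ (the multiplicative identity in $\overline{\boldsymbol{T}(X_1, \ldots, X_n)}$).

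For the ``only if'' direction, I would start from $(f, g) \in E$, multiply both sides by $g^{\odot(-1)}$ via axiom~(5) to obtain $(f \odot g^{\odot(-1)}, 0) \in E$, and then add $0$ to both sides via axiom~(4) to get $(f \odot g^{\odot(-1)} \oplus 0, 0) \in E$. The other pair is obtained symmetrically by first multiplying by $f^{\odot(-1)}$ instead.

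For the ``if'' direction, set $h := f \odot g^{\odot(-1)}$ so that the two hypotheses read $(h \oplus 0, 0) \in E$ and $(h^{\odot(-1)} \oplus 0, 0) \in E$. Multiplying the second relation by $h$ and using $h \odot h^{\odot(-1)} = 0$ and distributivity of $\odot$ over $\oplus$ yields $(0 \oplus h, h) = (h \oplus 0, h) \in E$. Combining this with the first hypothesis by symmetry and transitivity gives $(h, 0) \in E$, i.e.\ $(f \odot g^{\odot(-1)}, 0) \in E$. Finally, multiplying this by $g$ gives $(f, g) \in E$, as required.

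There is essentially no obstacle here: the statement is a direct consequence of the semifield structure and the defining axioms of a congruence. The only thing one has to be a little careful about is that the manipulations make sense, which is exactly why the hypothesis $f, g \neq -\infty$ is imposed (so that $g^{\odot(-1)}$ and $h^{\odot(-1)}$ exist in $\overline{\boldsymbol{T}(X_1, \ldots, X_n)}$). The key algebraic step is recognizing that multiplying $(h^{\odot(-1)} \oplus 0, 0)$ by $h$ produces exactly the mirror image of $(h \oplus 0, 0)$ needed to collapse both relations to $(h, 0) \in E$.
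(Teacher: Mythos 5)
Your proof is correct and follows essentially the same route as the paper: both directions are pure congruence-axiom manipulations, clearing the inverse by multiplying by the appropriate unit and then using symmetry and transitivity. The only cosmetic difference is that in the ``if'' direction the paper pivots through the element $f \oplus g$ (multiplying the two hypotheses by $g$ and $f$ respectively), whereas you pivot through $h \oplus 0$ and multiply by $g$ at the end; the two computations are equivalent.
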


\begin{proof}
First we note that since $f, g \not= -\infty$, the inverses $f^{\odot (-1)}, g^{\odot (-1)}$ exist.

We assume that $(f, g) \in E$ holds.
As $E$ is a congruence, we have
\begin{align*}
&\left( f \odot g^{\odot (-1)} \oplus 0, 0 \right) = \left( f \odot g^{\odot (-1)} \oplus 0, g \odot g^{\odot (-1)} \oplus 0 \right) \in E,\\
&\left( f^{\odot (-1)} \odot g \oplus 0, 0 \right) = \left( f^{\odot (-1)} \odot g \oplus 0, f^{\odot (-1)} \odot f \oplus 0 \right) \in E.
\end{align*}

Conversely we assume that $\left( f \odot g^{\odot (-1)} \oplus 0, 0 \right), \left( f^{\odot (-1)} \odot g \oplus 0, 0 \right) \in E$ hold.
Because
\begin{align*}
(f \oplus g, g) &= \left( \left( f \odot g^{\odot (-1)} \oplus 0 \right) \odot g, 0 \odot g \right) \in E,\\
(f \oplus g, f) &= \left( \left( f^{\odot (-1)} \odot g \oplus 0 \right) \odot f, 0 \odot f \right) \in E,
\end{align*}
we have $(f, g) \in E$ by symmetry and transitivity.
\end{proof}

\begin{lemma}[cf.~{\cite[Lemma 3.4]{JuAe4}}]
	\label{lem3}
For any semiring $S$, the semiring $S^2$ is generated by $(1, 0)$.
\end{lemma}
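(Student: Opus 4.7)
The plan is to interpret the statement in the sense of Subsection~\ref{subsection2.3}: the semiring $S^2$ is also the improper congruence on $S$, and the claim is that the congruence $\langle (1_S, 0_S) \rangle$ on $S$ equals $S^2$. The inclusion $\langle (1_S, 0_S) \rangle \subseteq S^2$ is immediate because $S^2$ is itself a congruence containing $(1_S, 0_S)$, so only the reverse inclusion requires work.

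To establish $S^2 \subseteq \langle (1_S, 0_S) \rangle$, I would proceed in three short steps, each invoking exactly one of the congruence axioms. First, for every $s \in S$, reflexivity gives $(s,s) \in \langle (1_S, 0_S) \rangle$; combining this with $(1_S, 0_S) \in \langle (1_S, 0_S) \rangle$ via the multiplicative compatibility axiom (Axiom (5) of Subsection~\ref{subsection2.3}) produces
\[
(s, 0_S) \;=\; (s \cdot 1_S,\, s \cdot 0_S) \;\in\; \langle (1_S, 0_S) \rangle.
\]
Second, symmetry then yields $(0_S, s) \in \langle (1_S, 0_S) \rangle$ for every $s \in S$. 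Third, given an arbitrary pair $(s,t) \in S^2$, transitivity applied to $(s, 0_S)$ and $(0_S, t)$ places $(s, t)$ in $\langle (1_S, 0_S) \rangle$. This exhausts $S^2$ and completes the proof.

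There is no genuine obstacle here: the argument is a direct unpacking of the congruence axioms, and the only content is the observation that the single pair $(1_S, 0_S)$ is enough to ``bridge'' any two elements of $S$ via the absorbing element $0_S$. The lemma will then be used in conjunction with Lemma~\ref{lem2} to reduce questions about finite generation of arbitrary congruences on $\overline{\boldsymbol{T}(X_1,\ldots,X_n)}$ to questions about a single generating pair of a specific normal form.
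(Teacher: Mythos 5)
Your proposal is correct and is essentially the paper's own argument: both derive $(s,0_S)=(s\cdot 1_S, s\cdot 0_S)\in\langle(1_S,0_S)\rangle$ from multiplicative compatibility and then conclude via symmetry and transitivity. No further comment is needed.
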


\begin{proof}
For any $x \in S$, we have $(x, 0) = (x \cdot 1, x \cdot 0) \in \langle (1, 0) \rangle$.
Thus, by symmetry and transitivity, for any $x, y \in S$, $(x, y) \in \langle (1, 0) \rangle$ holds, which means that $S^2 = \langle (1, 0) \rangle$.
\end{proof}

\begin{prop}[cf.~{\cite[Proposition 3.10]{Joo=Mincheva2}}]
	\label{prop1}
For a congruence $E$ on $\overline{\boldsymbol{T}(X_1, \ldots, X_n)}$, $E$ is finitely generated as a congruence if and only if there exists $f \in \overline{\boldsymbol{T}(X_1, \ldots, X_n)}$ such that $E = \langle (f, 0) \rangle$.
In particular, if $E$ is proper, then $f$ above can be taken as $f \ge 0$.
\end{prop}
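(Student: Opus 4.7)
The forward implication is immediate, since $\langle (f, 0) \rangle$ is by definition generated by one pair. For the converse I plan the following. Start with a finite generating set $(f_1, g_1), \ldots, (f_m, g_m)$ for $E$. If $E$ is improper, take $f := -\infty$: the pair $(-\infty, 0)$ alone generates the full square $\overline{\boldsymbol{T}(X_1, \ldots, X_n)}^2$, since multiplying it by any $x$ yields $(-\infty, x) = (-\infty \odot x, 0 \odot x)$, and transitivity finishes (this is in the spirit of Lemma \ref{lem3}). So suppose $E$ is proper. Any generator with exactly one coordinate equal to $-\infty$ would, by the same argument, force $E$ to be improper; hence we may assume each $f_i, g_i \neq -\infty$, and in particular each is $\odot$-invertible. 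Applying Lemma \ref{lem2} to every generator, I replace the generating set by the $2m$ pairs $(h_i, 0)$ and $(h_i^{\prime}, 0)$, where $h_i := (f_i \odot g_i^{\odot (-1)}) \oplus 0$ and $h_i^{\prime} := (f_i^{\odot (-1)} \odot g_i) \oplus 0$; tautologically $h_i, h_i^{\prime} \ge 0$ in the natural partial order.

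The key construction is to set
\[
f := h_1 \odot h_1^{\prime} \odot h_2 \odot h_2^{\prime} \odot \cdots \odot h_m \odot h_m^{\prime},
\]
which is itself $\ge 0$, since a $\odot$-product of elements $\ge 0$ remains $\ge 0$. I claim $E = \langle (f, 0) \rangle$. The inclusion $\langle (f, 0) \rangle \subseteq E$ is immediate: each factor of $f$ is $E$-equivalent to $0$, so $f \equiv_{E} 0$. The reverse inclusion amounts to producing each $(h_i, 0)$ and $(h_i^{\prime}, 0)$ inside $\langle (f, 0) \rangle$. For this, observe that for any factor $h$ of $f$ the product of the remaining $2m - 1$ factors is $\ge 0$, hence $f \ge h$, i.e. $f \oplus h = f$. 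Working inside $\langle (f, 0) \rangle$, adding $(h, h)$ to $(f, 0)$ yields $(f \oplus h, 0 \oplus h) = (f, h)$ (using $h \ge 0$, so $0 \oplus h = h$); transitivity with $(f, 0)$ then gives $(h, 0) \in \langle (f, 0) \rangle$. Applied to every factor, this proves $E \subseteq \langle (f, 0) \rangle$.

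The step I expect to be the main obstacle is exactly this recovery of the $2m$ individual relations from the single relation $(f, 0)$. The device that makes it go through is the bundling: by taking $f$ to be the $\odot$-product of the $h_i, h_i^{\prime}$, one simultaneously secures $f \ge h$ for every factor $h$ and $f \ge 0$, and then the $\oplus$-absorption identities $f \oplus h = f$ and $0 \oplus h = h$ combined with the congruence axioms let a single pair encode all the original relations. As a byproduct, the constructed $f$ satisfies $f \ge 0$ whenever $E$ is proper, which is precisely the additional clause of the proposition.
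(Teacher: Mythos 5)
Your proposal is correct and follows essentially the same route as the paper: reduce to generators of the form $(h_i,0)$, $(h_i',0)$ with $h_i, h_i' \ge 0$ via Lemma \ref{lem2}, take $f$ to be their $\odot$-product, and recover each $(h,0)$ from $(f,0)$ using $0 \le h \le f$. The only difference is that you prove the absorption step $(h,0) \in \langle (f,0)\rangle$ explicitly, where the paper cites \cite[Proposition 2.2(iii)]{Joo=Mincheva1}; your derivation of it is valid.
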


\begin{proof}
The if part is clear.
We shall show the only if part.
If $E$ is trivial, the assertion is clear, and if $E$ is improper, we have the conclusion by Lemma \ref{lem3}.
Assume that $E$ is nontrivial and proper.
Assume that $E$ is finitely generated as a congruence.
Then there exist $(f_1, g_1), \ldots, (f_m, g_m) \in \overline{\boldsymbol{T}(X_1, \ldots, X_n)}^2$ that generate $E$, i.e., $\langle (f_1, g_1), \ldots, (f_m, g_m) \rangle = E$.
If there exists $j$ such that $f_j \not= - \infty$ and $g_j = -\infty$, then, since the inverse $f_j^{\odot (-1)}$ exists, we have $(0, -\infty) = \left( f_j \odot f_j^{\odot (-1)}, g_j \odot f_j^{\odot (-1)} \right) \in E$, which is a contradiction by Lemma \ref{lem3}.
Hence, without loss of generality, we can assume that for any $i$, $(f_i, g_i) \not\in \Delta$ and $f_i, g_i \not= -\infty$.
By Lemma \ref{lem2},
\begin{align*}
&\left( f_1 \odot g_1^{\odot (-1)} \oplus 0, 0 \right), \left( f_1^{\odot (-1)} \odot g_1 \oplus 0, 0 \right),\\
&\ldots, \left( f_m \odot g_m^{\odot (-1)} \oplus 0, 0 \right), \left( f_m^{\odot (-1)} \odot g_m \oplus 0, 0 \right)
\end{align*}
are also generators of $E$.
Let $h_i := f_i \odot g_i^{\odot (-1)} \oplus 0$ and $h^{\prime}_i := f_i^{\odot (-1)} \odot g_i \oplus 0$ for each $i$.
Then $h_i \ge 0$ and $h^{\prime}_i \ge 0$, and hence we have
\begin{align*}
h_1 \odot \cdots \odot h_m \odot h_1^{\prime} \odot \cdots \odot h_m^{\prime} \ge h_1 \oplus \cdots \oplus h_m \oplus h_1^{\prime} \oplus \cdots \oplus h_m^{\prime} \ge h_i, h_i^{\prime} \ge 0
\end{align*}
for any $i$.
By \cite[Proposition 2.2(iii)]{Joo=Mincheva1},
\begin{align*}
(h_i, 0), (h_i^{\prime}, 0) &\in \langle (h_1 \oplus \cdots \oplus h_m \oplus h_1^{\prime} \oplus \cdots \oplus h_m^{\prime}, 0) \rangle\\
&\subset \langle (h_1 \odot \cdots \odot h_m \odot h_1^{\prime} \odot \cdots \odot h_m^{\prime}, 0) \rangle
\end{align*}
hold for any $i$.
Since $(h_1, 0), \ldots, (h_m, 0), (h_1^{\prime}, 0), \ldots, (h_m^{\prime}, 0)$ are generators of $E$, the converse inclusions are clear.
Hence
\begin{align*}
E &= \langle (h_1 \odot \cdots \odot h_m \odot h_1^{\prime} \odot \cdots \odot h_m^{\prime}, 0) \rangle\\
&= \langle (h_1 \oplus \cdots \oplus h_m \oplus h_1^{\prime} \oplus \cdots \oplus h_m^{\prime}, 0) \rangle
\end{align*}
hold.
\end{proof}

By Lemma \ref{lem1} and Proposition \ref{prop1}, we have the following corollary:

\begin{cor}
	\label{cor4}
For a finitely generated congruence $E$ on $\overline{\boldsymbol{T}(X_1, \ldots, X_n)}$, the congruence variety $\boldsymbol{V}(E)$ is a finite union of $\boldsymbol{R}$-rational polyhedral sets.
\end{cor}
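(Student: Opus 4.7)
The plan is to reduce the statement to Lemma~\ref{lem1} by using Proposition~\ref{prop1} to replace $E$ with a single generating pair. First I would apply Proposition~\ref{prop1}: since $E$ is finitely generated, there exists $f \in \overline{\boldsymbol{T}(X_1, \ldots, X_n)}$ with $E = \langle (f, 0) \rangle$. It then suffices to prove the set-theoretic identity
\[
\boldsymbol{V}(E) \;=\; \boldsymbol{V}((f, 0)),
\]
after which Lemma~\ref{lem1} immediately yields the desired decomposition of $\boldsymbol{V}(E)$ as a finite union of $\boldsymbol{R}$-rational polyhedral sets.

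The inclusion $\boldsymbol{V}(E) \subset \boldsymbol{V}((f, 0))$ is free from the definition, since $(f, 0) \in E$. For the reverse inclusion, the key observation is that for each $x \in \boldsymbol{R}^n$ the evaluation map $\operatorname{ev}_x : \overline{\boldsymbol{T}(X_1, \ldots, X_n)} \to \boldsymbol{T}$ is a well-defined $\boldsymbol{T}$-algebra homomorphism (any rational function can be written as $f_1 \odot f_2^{\odot(-1)}$ with $f_2(x) \in \boldsymbol{R}$, so the value is unambiguous), and hence its kernel congruence
\[
E_x \;:=\; \{ (g, h) \in \overline{\boldsymbol{T}(X_1, \ldots, X_n)}^2 \,|\, g(x) = h(x) \}
\]
is a congruence on $\overline{\boldsymbol{T}(X_1, \ldots, X_n)}$. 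If $x \in \boldsymbol{V}((f, 0))$, then $(f, 0) \in E_x$, and since $E_x$ is a congruence containing the generator, we obtain $E = \langle (f, 0) \rangle \subset E_x$, i.e.\ $x \in \boldsymbol{V}(E)$.

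Combining the two inclusions gives $\boldsymbol{V}(E) = \boldsymbol{V}((f, 0))$, and Lemma~\ref{lem1} finishes the argument. I do not anticipate a significant obstacle: the only point that requires a moment's care is confirming that evaluation at a point of $\boldsymbol{R}^n$ (as opposed to $\boldsymbol{T}^n$) really does give a homomorphism into $\boldsymbol{T}$, so that $E_x$ is a congruence; this is precisely the reason the statement is phrased for $\boldsymbol{V}(E) \subset \boldsymbol{R}^n$ rather than $\boldsymbol{T}^n$.
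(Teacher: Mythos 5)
Your proposal is correct and is essentially the paper's argument: the paper derives Corollary \ref{cor4} directly by combining Proposition \ref{prop1} (replacing $E$ by a single generator $(f,0)$) with Lemma \ref{lem1}, and your verification that $\boldsymbol{V}(\langle (f,0)\rangle) = \boldsymbol{V}((f,0))$ via the evaluation congruence $E_x$ just makes explicit the step the paper leaves implicit.
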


Next, we prove that the congruence on $\overline{\boldsymbol{T}(X_1, \ldots, X_n)}$ defined by a finite union of $\boldsymbol{R}$-rational polyhedral sets is finitely generated as a congruence.

\begin{lemma}
	\label{lem4}
Let $V$ be a nonempty subset of $\boldsymbol{R}^n$.
For any $(g, 0) \in \boldsymbol{E}\left(\overline{V}\right)$ such that $g \ge 0$, there exists $N \in \boldsymbol{Z}_{>0}$ such that $\operatorname{dist}\left(x, \overline{V}\right)^{\odot N} \ge g(x)$ holds for any $x \in \boldsymbol{R}^n$.
\end{lemma}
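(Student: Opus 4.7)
The plan is to show that $g$, viewed as a function $\boldsymbol{R}^n \to \boldsymbol{R}$, is globally Lipschitz, and then to combine the Lipschitz estimate with the facts $g(y) = 0$ for $y \in \overline{V}$ (from $(g, 0) \in \boldsymbol{E}(\overline{V})$) and $g \ge 0$ via the elementary inequality $g(x) = g(x) - g(y) \le L \cdot | x - y |$ for an arbitrary $y \in \overline{V}$, where $L$ is a Lipschitz constant for $g$. Taking the infimum over $y \in \overline{V}$ converts this into a bound in terms of $\operatorname{dist}(x, \overline{V})$.

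To carry out the first step, note that since $V$, and hence $\overline{V}$, is nonempty, the assumption $(g, 0) \in \boldsymbol{E}(\overline{V})$ forces $g \not= -\infty$; thus we may write $g = g_1 \odot g_2^{\odot (-1)}$ with $g_1, g_2 \in \overline{\boldsymbol{T}[X_1, \ldots, X_n]} \setminus \{ -\infty \}$. Each $g_i$ is represented by a tropical polynomial $\bigoplus_{\boldsymbol{k}} a_{\boldsymbol{k}} \odot X_1^{\odot k_1} \odot \cdots \odot X_n^{\odot k_n}$ whose underlying function on $\boldsymbol{R}^n$ is $x \mapsto \operatorname{max}_{\boldsymbol{k}} (a_{\boldsymbol{k}} + \boldsymbol{k} \cdot x)$ over a finite index set. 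Since a finite maximum of integer-affine functions is Lipschitz on $\boldsymbol{R}^n$, each $g_i$ is Lipschitz, and therefore $g = g_1 - g_2$ (in the usual arithmetic on $\boldsymbol{R}^n$) is Lipschitz with some constant $L \ge 0$.

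Given this, for any $x \in \boldsymbol{R}^n$ and any $y \in \overline{V}$, the hypotheses give $g(y) = 0$ and $g(x) \ge 0$, so
\[
g(x) = g(x) - g(y) \le L \cdot | x - y |.
\]
Taking the infimum over $y \in \overline{V}$ yields $g(x) \le L \cdot \operatorname{dist}(x, \overline{V})$ for every $x \in \boldsymbol{R}^n$. Now choose any $N \in \boldsymbol{Z}_{>0}$ with $N \ge L$, for example $N = \operatorname{max}\{ \lceil L \rceil, 1 \}$; then
\[
\operatorname{dist}(x, \overline{V})^{\odot N} = N \cdot \operatorname{dist}(x, \overline{V}) \ge L \cdot \operatorname{dist}(x, \overline{V}) \ge g(x),
\]
which is exactly the desired inequality.

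I do not foresee any serious obstacle here; the only thing to be careful about is that $g$ genuinely defines a single-valued, finite-valued (hence Lipschitz) function on $\boldsymbol{R}^n$, which is automatic from the construction of $\overline{\boldsymbol{T}(X_1, \ldots, X_n)}$ as the semifield of fractions of the function-semiring $\overline{\boldsymbol{T}[X_1, \ldots, X_n]}$, so that the representation $g = g_1 \odot g_2^{\odot(-1)}$ gives a well-defined pointwise difference $g_1(x) - g_2(x)$ on all of $\boldsymbol{R}^n$ independent of the choice of representatives.
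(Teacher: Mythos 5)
Your proof is correct and is essentially the paper's argument: the paper's chain of monomial estimates is exactly an explicit verification that $g = g_1 \odot g_2^{\odot(-1)}$ is Lipschitz on $\boldsymbol{R}^n$ with constant controlled by the integer exponents, applied at the nearest point $y \in \overline{V}$ to $x$. Your packaging via the general fact that a finite maximum of integer-affine functions is Lipschitz is a clean restatement of the same idea, and your handling of well-definedness of $g_1(x)-g_2(x)$ on $\boldsymbol{R}^n$ is the right point to flag.
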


\begin{proof}
As $(g, 0) \in \boldsymbol{E} \left( \overline{V} \right)$, for any $x \in \overline{V}$, we have $g(x) = 0$.
Let $x \in \boldsymbol{R}^n \setminus \overline{V}$ and $y$ be a point in $\overline{V}$ satisfying $\operatorname{dist}(x, y) = \operatorname{dist} \left( x, \overline{V} \right)$.
Let $z := x - y$.
Then we have $\operatorname{dist} \left( x, \overline{V} \right) = \sqrt{(z)_1^2 + \cdots + (z)_n^2} \ge |(z)_1|, \ldots, |(z)_n|$.
Hence $\operatorname{dist} \left(x, \overline{V} \right)^{\odot n} \ge |(z)_1| \odot \cdots \odot |(z)_n|$ holds.
Because $\overline{V}$ is nonempty and $(g, 0) \in \boldsymbol{E} \left( \overline{V} \right)$, $g$ is not $-\infty$.
Thus there exist $g_1, g_2 \in \overline{\boldsymbol{T}[X_1, \ldots, X_n]} \setminus \{ -\infty \}$ such that $g = g_1 \odot g_2^{\odot (-1)}$.
Let $g_i^{\prime} = \bigoplus_{\boldsymbol{j}_i = (j_{i, 1}, \ldots, j_{i, n}) \in B_i} b_{\boldsymbol{j}_i} \odot X_1^{\odot j_{i, 1}} \odot \cdots \odot X_n^{\odot j_{i, n}} \in \boldsymbol{T}[X_1, \ldots, X_n] \setminus \{ -\infty \}$ be a representative of $g_i$, where $B_i$ is a finite subset of $\left( \boldsymbol{Z}_{\ge 0} \right)^n$ and $b_{\boldsymbol{j}_i} \in \boldsymbol{R}$ for $\boldsymbol{j}_i = (j_{i, 1}, \ldots, j_{i, n}) \in B_i$.
Let $g_1(x) = b_{\boldsymbol{j}_1} \odot (x)_1^{\odot j_{1, 1}} \odot \cdots \odot (x)_n^{\odot j_{1, n}}$ for some $\boldsymbol{j}_1 = (j_{1, 1}, \ldots, j_{1, n}) \in B_1$ and $g_2(y) = b_{\boldsymbol{j}_2} \odot (y)_1^{\odot j_{2, 1}} \odot \cdots \odot (y)_n^{\odot j_{2, n}}$ for some $\boldsymbol{j}_2 = (j_{2, 1}, \ldots, j_{2, n}) \in B_2$.
We have
\begin{align*}
g_1(x) = g_1(y + z) &= b_{\boldsymbol{j}_1} \odot (y + z)_1^{\odot j_{1, 1}} \odot \cdots \odot (y + z)_n^{\odot j_{1, n}}\\
&= b_{\boldsymbol{j}_1} \odot (y)_1^{\odot j_{1, 1}} \odot \cdots \odot (y)_n^{\odot j_{1, n}} \odot (z)_1^{\odot j_{1, 1}} \odot \cdots \odot (z)_n^{\odot j_{1, n}}\\
&\le g_1(y) \odot (z)_1^{\odot j_{1, 1}} \odot \cdots \odot (z)_n^{\odot j_{1, n}}
\end{align*}
and
\begin{align*}
g_2(y) &= b_{\boldsymbol{j}_2} \odot (y)_1^{\odot j_{2, 1}} \odot \cdots \odot (y)_n^{\odot j_{2, n}}\\
&= b_{\boldsymbol{j}_2} \odot (y + z - z)_1^{\odot j_{2, 1}} \odot \cdots \odot (y + z - z)_n^{\odot j_{2, n}}\\
&= b_{\boldsymbol{j}_2} \odot (y + z)_1^{\odot j_{2, 1}} \odot \cdots \odot (y + z)_n^{\odot j_{2, n}} \odot (- z)_1^{\odot j_{2, 1}} \odot \cdots \odot (- z)_n^{\odot j_{2, n}}\\
&\le g_2(y + z) \odot (- z)_1^{\odot j_{2, 1}} \odot \cdots \odot (- z)_n^{\odot j_{2, n}}.
\end{align*}
Since $0 = g(y) = g_1(y) \odot g_2(y)^{\odot (-1)}$, we have $g_1(y) = g_2(y) \in \boldsymbol{R}$.
Therefore 
\begin{align*}
g(x) = g(y + z) &= g_1(y + z) \odot g_2(y + z)^{\odot (-1)}\\
&\le  g_1(y) \odot (z)_1^{\odot j_{1, 1}} \odot \cdots \odot (z)_n^{\odot j_{1, n}} \odot g_2(y + z)^{\odot (-1)}\\
&= g_2(y) \odot (z)_1^{\odot j_{1, 1}} \odot \cdots \odot (z)_n^{\odot j_{1, n}} \odot g_2(y + z)^{\odot (-1)}\\
&\le  (z)_1^{\odot j_{1, 1}} \odot \cdots \odot (z)_n^{\odot j_{1, n}} \odot (- z)_1^{\odot j_{2, 1}} \odot \cdots \odot (- z)_n^{\odot j_{2, n}}\\
&=  (z)_1^{\odot (j_{1, 1} - j_{2, 1})} \odot \cdots \odot (z)_n^{\odot (j_{1, n} - j_{2, n})}\\
&\le \operatorname{max} \left\{ (z)_1^{\odot (j_{1, 1}^{\prime} - j_{2, 1}^{\prime})} \odot \cdots \odot (z)_n^{\odot (j_{1, n}^{\prime} - j_{2, n}^{\prime})} \, \middle| \, \begin{alignedat}{2}(j_{1, 1}^{\prime}, \ldots, j_{1, n}^{\prime}) \in B_1 \\ (j_{2, 1}^{\prime}, \ldots, j_{2, n}^{\prime}) \in B_2 \end{alignedat} \right\}
\end{align*}
hold.
The inequality $\operatorname{dist} \left( x, \overline{V} \right)^{\odot nk} \ge g(x)$ holds for $k \in \boldsymbol{Z}_{>0}$ greater than $\operatorname{max}\left\{ |j_{1, 1}^{\prime} - j_{2, 1}^{\prime}|, \ldots, |j_{1, n}^{\prime} - j_{2, n}^{\prime}| \,|\, (j_{1, 1}^{\prime}, \ldots, j_{1, n}^{\prime}) \in B_1, (j_{2, 1}^{\prime}, \ldots, j_{2, n}^{\prime}) \in B_2 \right\}$.
We note that this $k$ is independent of the choice of $x$.
In conclusion, for $N = nk$, we have $\operatorname{dist} \left(x, \overline{V} \right)^{\odot N} \ge g(x)$ for any $x \in \boldsymbol{R}^n$.
\end{proof}

\begin{cor}
	\label{cor5}
Let $V$ be a nonempty subset of $\boldsymbol{R}^n$.
For $f \in \overline{\boldsymbol{T}(X_1, \ldots, X_n)}$, assume that $(1)$ $\boldsymbol{V}((f, 0)) = \overline{V}$ holds and $(2)$ there exists $k^{\prime} \in \boldsymbol{Z}_{>0}$ such that for any $x \in \boldsymbol{R}^n$, $f(x)^{\odot k^{\prime}} \ge \operatorname{dist} \left( x, \overline{V} \right)$ holds.
Then for any $(g, 0) \in \boldsymbol{E}\left( \overline{V} \right)$ such that $g \ge 0$, there exists $N^{\prime} \in \boldsymbol{Z}_{>0}$ such that $f^{\odot N^{\prime}} \ge g$ and $\boldsymbol{E} \left( \overline{V} \right) = \langle (f, 0) \rangle$ holds.
\end{cor}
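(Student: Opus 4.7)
The overall plan is first to establish the quantitative inequality $f^{\odot N'} \ge g$, then to use it together with Lemma \ref{lem2} to deduce $\boldsymbol{E}(\overline{V}) = \langle (f, 0) \rangle$.

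For the inequality, I would invoke Lemma \ref{lem4} on the pair $(g, 0)$, obtaining $N \in \boldsymbol{Z}_{>0}$ such that $\operatorname{dist}(x, \overline{V})^{\odot N} \ge g(x)$ for all $x \in \boldsymbol{R}^n$. Raising assumption $(2)$ to the tropical $N$-th power gives $f^{\odot k'N}(x) \ge \operatorname{dist}(x, \overline{V})^{\odot N}$ on $\boldsymbol{R}^n$, so the choice $N' := k'N$ does the job.

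For the congruence identity, the inclusion $\langle (f, 0) \rangle \subset \boldsymbol{E}(\overline{V})$ is immediate from condition $(1)$: $f$ vanishes on $\overline{V}$, so $(f, 0) \in \boldsymbol{E}(\overline{V})$, and $\boldsymbol{E}(\overline{V})$ is a congruence. For the reverse inclusion, take $(g_1, g_2) \in \boldsymbol{E}(\overline{V})$. The case where either component equals $-\infty$ is trivial: since every nonzero element of $\overline{\boldsymbol{T}(X_1, \ldots, X_n)}$ is $\boldsymbol{R}$-valued on $\boldsymbol{R}^n$ and $\overline{V}$ is nonempty, $g_1 = -\infty$ forces $g_2 = -\infty$, so $(g_1, g_2) \in \Delta \subset \langle (f, 0) \rangle$. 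Assuming then $g_1, g_2 \ne -\infty$, by Lemma \ref{lem2} it suffices to verify that $(h, 0), (h', 0) \in \langle (f, 0) \rangle$, where $h := g_1 \odot g_2^{\odot(-1)} \oplus 0$ and $h' := g_1^{\odot(-1)} \odot g_2 \oplus 0$.

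Both $h$ and $h'$ are $\ge 0$ and restrict to $0$ on $\overline{V}$, so the first part supplies $N'$ with $f^{\odot N'} \ge h$. Since $(f, 0) \in \langle (f, 0) \rangle$ and congruences are closed under $\odot$, we obtain $(f^{\odot N'}, 0) \in \langle (f, 0) \rangle$. Combining this with the trivial pair $(h, h)$ via the $\oplus$ operation yields $(f^{\odot N'} \oplus h,\, 0 \oplus h) = (f^{\odot N'}, h) \in \langle (f, 0) \rangle$, where I use $h \le f^{\odot N'}$ and $h \ge 0$. Transitivity with $(f^{\odot N'}, 0)$ then gives $(h, 0) \in \langle (f, 0) \rangle$, and the same reasoning applies to $h'$. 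The anticipated main obstacle is the last bridging step, namely converting the quantitative bound $f^{\odot N'} \ge h$ into a congruence-theoretic membership, together with keeping track of the $-\infty$ edge cases throughout.
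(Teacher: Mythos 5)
Your proposal is correct and follows essentially the same route as the paper: Lemma \ref{lem4} combined with hypothesis $(2)$ gives $N' = k'N$, hypothesis $(1)$ gives $\langle (f,0)\rangle \subset \boldsymbol{E}(\overline{V})$, and the reverse inclusion is obtained by disposing of the $-\infty$ case and reducing via Lemma \ref{lem2} to pairs $(h,0)$ with $0 \le h \le f^{\odot N'}$. The only difference is that where the paper cites \cite[Proposition 2.2(iii)]{Joo=Mincheva1} for the step ``$0 \le h \le f^{\odot N'}$ implies $(h,0) \in \langle (f,0)\rangle$,'' you supply the standard direct argument (adding the trivial pair $(h,h)$ to $(f^{\odot N'},0)$ and using transitivity), which is a correct inline proof of that cited fact.
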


\begin{proof}
By $(1)$, we have $\boldsymbol{E}\left(\overline{V} \right) \supset \langle (f, 0) \rangle$.
For $N$ in Lemma \ref{lem4}, by $(2)$, we have $0 \le g(x) \le \operatorname{dist} \left( x, \overline{V} \right)^{\odot N} \le f(x)^{\odot Nk^{\prime}}$ for any $x \in \boldsymbol{R}^n$, thus $0 \le g \le f^{\odot N^{\prime}}$ for $N^{\prime} = Nk^{\prime}$.
By \cite[Proposition 2.2(iii)]{Joo=Mincheva1}, this means that $(g, 0) \in \langle (f, 0) \rangle$.
Since $V$ is nonempty, by \cite[Lemmas 3.4 and 3.7]{JuAe4}, if $(h, -\infty) \in \boldsymbol{E} \left( \overline{V} \right)$, then $h = -\infty$, and thus we have $\boldsymbol{E} \left( \overline{V} \right) \subset \langle (f, 0) \rangle$ by Lemma \ref{lem2}.
\end{proof}

\begin{cor}
	\label{cor6}
Let $f \in \overline{\boldsymbol{T}(X_1, \ldots, X_n)}$ be represented by $a \odot X_1^{\odot i_1} \odot \cdots \odot X_n^{\odot i_n} \oplus 0 \in \boldsymbol{T}\left[X_1^{\pm 1}, \ldots, X_n^{\pm 1}\right]$.
If $a \in \boldsymbol{R}$ and at least one of $i_1, \ldots, i_n$ is not zero, then $f(x) \ge \operatorname{dist}(x, \boldsymbol{V}((f, 0)))$ for any $x \in \boldsymbol{R}^n$ and $\boldsymbol{E}(\boldsymbol{V}((f, 0))) = \langle (f, 0) \rangle$ hold.
\end{cor}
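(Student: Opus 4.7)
The plan is to unpack what $f$ and $\boldsymbol{V}((f,0))$ concretely are, verify the distance inequality by elementary geometry of a half-space, and then invoke Corollary \ref{cor5} to conclude the equality of congruences. Writing $\boldsymbol{i} := (i_1,\ldots,i_n)\in\boldsymbol{Z}^n\setminus\{0\}$, the function $f$ is represented pointwise by
\[
f(x) \;=\; \max\bigl\{\, a + \boldsymbol{i}\cdot x,\; 0 \,\bigr\},
\]
so $\boldsymbol{V}((f,0))$ is precisely the closed half-space $H := \{x\in\boldsymbol{R}^n \mid a+\boldsymbol{i}\cdot x \le 0\}$. In particular $H$ is closed and, because $\boldsymbol{i}\ne 0$, nonempty, so $\overline{H}=H$.

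For the distance inequality, the two cases split naturally. If $x\in H$ then $f(x)=0$ and $\operatorname{dist}(x,H)=0$, with equality. If $x\notin H$ then $a+\boldsymbol{i}\cdot x>0$, so $f(x)=a+\boldsymbol{i}\cdot x$, and the standard formula for the distance from a point to an affine hyperplane gives
\[
\operatorname{dist}(x,H) \;=\; \frac{a+\boldsymbol{i}\cdot x}{\sqrt{i_1^2+\cdots+i_n^2}}.
\]
Since $i_1,\ldots,i_n$ are integers not all zero, $\sqrt{i_1^2+\cdots+i_n^2}\ge 1$, hence $f(x)\ge \operatorname{dist}(x,H)$, proving the first assertion.

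For the second assertion, I take $V := H = \boldsymbol{V}((f,0))$ and apply Corollary \ref{cor5}. Condition (1) there, $\boldsymbol{V}((f,0))=\overline{V}$, is immediate since $H$ is already closed. Condition (2) is exactly the distance bound just verified, with $k'=1$. The hypothesis of Corollary \ref{cor5} also requires $V$ to be nonempty, which holds since $\boldsymbol{i}\ne 0$. Therefore $\boldsymbol{E}(\boldsymbol{V}((f,0)))=\langle (f,0)\rangle$, as required.

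There is no serious obstacle here; the only substantive point is recognizing that the integrality assumption on $(i_1,\ldots,i_n)$ yields $\|\boldsymbol{i}\|\ge 1$, which is exactly what turns the genuine affine distance formula into the inequality $f(x)\ge \operatorname{dist}(x,\boldsymbol{V}((f,0)))$. Everything else is a direct application of Corollary \ref{cor5}.
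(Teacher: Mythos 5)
Your proof is correct and follows essentially the same route as the paper: both identify $\boldsymbol{V}((f,0))$ as the closed half-space $\{x \mid a+\boldsymbol{i}\cdot x\le 0\}$, observe that $f(x)$ equals $\operatorname{dist}(x,\boldsymbol{V}((f,0)))$ scaled by $\sqrt{i_1^2+\cdots+i_n^2}\ge 1$, and then invoke Corollary \ref{cor5} with $k'=1$. No issues.
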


\begin{proof}
By assumption, $\sqrt{i_1^2 + \cdots + i_n^2} \ge 1$ holds.
By the definition of $f$, $\boldsymbol{V}((f, 0))$ is nonempty, and for any $x \in \boldsymbol{R}^n$, $f(x)$ is $\operatorname{dist}(x, \boldsymbol{V}((f, 0)))$ multiplied (in the usual meaning) by the positive real number $\sqrt{i_1^2 + \cdots + i_n^2}$.
Hence $f(x) \ge \operatorname{dist}(x, \boldsymbol{V}((f, 0)))$ holds.
By Corollary \ref{cor5}, we have $\boldsymbol{E}(\boldsymbol{V}((f, 0))) = \langle (f, 0) \rangle$.
\end{proof}

\begin{lemma}
	\label{lem5}
Let $m \in \boldsymbol{Z}_{\ge 1}$.
For $j = 1, \ldots, m$, let $f_j \in \overline{\boldsymbol{T}(X_1, \ldots, X_n)}$ be represented by $a_j \odot X_1^{\odot i_{j, 1}} \odot \cdots \odot X_n^{\odot i_{j, n}} \oplus 0 \in \boldsymbol{T}\left[X_1^{\pm 1}, \ldots, X_n^{\pm 1}\right]$.
If $a_j \in \boldsymbol{R}$ and at least one of $i_{j, 1}, \ldots, i_{j, n}$ is not zero for any $j$ and $\bigcap_{j = 1}^m \boldsymbol{V}((f_j, 0))$ is nonempty, then the following hold:

$(1)$ $\boldsymbol{V}((f_1 \odot \cdots \odot f_m, 0)) = \boldsymbol{V}((f_1 \oplus \cdots \oplus f_m, 0)) = \bigcap_{j = 1}^m \boldsymbol{V}((f_j, 0))$,

$(2)$ there exists $k \in \boldsymbol{Z}_{>0}$ such that $\left(f_1(x) \odot \cdots \odot f_m(x) \right)^{\odot k} \ge \operatorname{dist} \left(x, \bigcap_{j = 1}^m \boldsymbol{V}((f_j, 0))\right)$, $\left( f_1(x) \oplus \cdots \oplus f_m(x) \right)^{\odot k} \ge \operatorname{dist}\left(x, \bigcap_{j = 1}^m \boldsymbol{V}((f_j, 0))\right)$ hold for any $x \in \boldsymbol{R}^n$, and

$(3)$ $\boldsymbol{E} \left( \bigcap_{j = 1}^m \boldsymbol{V}((f_j, 0))\right) = \langle (f_1 \odot \cdots \odot f_m, 0) \rangle = \langle (f_1 \oplus \cdots \oplus f_m, 0) \rangle$.
\end{lemma}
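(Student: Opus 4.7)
The plan is to address parts (1), (2), (3) in order, with the substance of the argument sitting in part (2). For (1), first note that each $f_j$, being represented by $\max(a_j + i_j \cdot y, 0)$ with $i_j := (i_{j,1}, \ldots, i_{j,n})$, takes nonnegative values everywhere. Hence $(f_1 \odot \cdots \odot f_m)(x) = f_1(x) + \cdots + f_m(x)$ vanishes iff every summand does, and $(f_1 \oplus \cdots \oplus f_m)(x) = \max_j f_j(x)$ vanishes iff every $f_j(x) = 0$. This yields the three-way equality of vanishing sets.

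For (2), set $K := \bigcap_{j=1}^m \boldsymbol{V}((f_j, 0)) = \{ y \in \boldsymbol{R}^n \,|\, i_j \cdot y + a_j \le 0 \text{ for all } j \}$, which matches exactly the setup of Corollary \ref{cor:decomposition3}. For each $x \in \boldsymbol{R}^n$, write $x = w + v$, where $w \in K$ is the nearest point to $x$ and $v \in \operatorname{Cone}\{ i_j \,|\, j \in A_w \}$ for $A_w := \{ j \,|\, i_j \cdot w + a_j = 0 \}$. For $j \in A_w$, a direct computation gives $f_j(x) = \max(i_j \cdot v, 0)$. Applying Lemma \ref{lem:distance} to $\operatorname{Cone}\{ i_j \,|\, j \in A_w \}$ produces a constant $k_{A_w} > 0$ with $\sqrt{v \cdot v} \le k_{A_w} \max_{j \in A_w}(i_j \cdot v)$; this maximum is nonnegative (by the opening observation in the proof of Lemma \ref{lem:distance}), so it coincides with $\max_{j \in A_w} f_j(x)$. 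Since there are only finitely many possible active subsets $A \subset \{1, \ldots, m\}$, take $k$ to be any positive integer exceeding $\max_A k_A$. Then $\operatorname{dist}(x, K) = \sqrt{v \cdot v} \le k \cdot (f_1 \oplus \cdots \oplus f_m)(x)$, giving $(f_1 \oplus \cdots \oplus f_m)(x)^{\odot k} \ge \operatorname{dist}(x, K)$; the analogous inequality for the product follows because $f_j \ge 0$ implies $f_1 \odot \cdots \odot f_m \ge f_1 \oplus \cdots \oplus f_m$.

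For (3), combining (1) and (2) shows that both $f := f_1 \odot \cdots \odot f_m$ and $f := f_1 \oplus \cdots \oplus f_m$ satisfy the hypotheses of Corollary \ref{cor5} with $V = K$, so each generates $\boldsymbol{E}(K)$ as a congruence. The main obstacle I anticipate is in part (2): assembling a single positive integer $k$ hinges on the finiteness of the collection of possible active sets $A_w$, and one must verify carefully that the decomposition of Corollary \ref{cor:decomposition3} really places $v = x - w$ in the cone generated precisely by the active $i_j$'s, so that Lemma \ref{lem:distance} can be applied with the matching collection of generators and the resulting max reduces to $\max_{j \in A_w} f_j(x)$. Everything else is bookkeeping from Corollaries \ref{cor5} and \ref{cor6}.
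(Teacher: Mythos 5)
Your proof is correct and follows essentially the same route as the paper: part (1) from the nonnegativity of each $f_j$, part (2) from Corollary \ref{cor:decomposition3} combined with Lemma \ref{lem:distance} (which is exactly what the paper invokes, though you supply the details it omits), and part (3) via Corollary \ref{cor5}. The only cosmetic difference is that you obtain $\langle (f_1 \odot \cdots \odot f_m, 0) \rangle = \langle (f_1 \oplus \cdots \oplus f_m, 0) \rangle$ by applying Corollary \ref{cor5} to both functions, whereas the paper deduces it directly from $0 \le f_1 \oplus \cdots \oplus f_m \le f_1 \odot \cdots \odot f_m \le (f_1 \oplus \cdots \oplus f_m)^{\odot m}$; both are fine.
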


\begin{proof}
$(1)$ and $(2)$ imply $(3)$ by Corollary \ref{cor5}.

Since $f_j \ge 0$, $f_1 \odot \cdots \odot f_m \ge f_1 \oplus \cdots \oplus f_m \ge 0$ hold, and thus $\langle (f_1 \odot \cdots \odot f_m, 0) \rangle \supset \langle (f_1 \oplus \cdots \oplus f_m, 0) \rangle$ holds by \cite[Proposition 2.2(iii)]{Joo=Mincheva1}.

Conversely, as $f_1 \oplus \cdots \oplus f_m \ge f_j$, we have $(f_1 \oplus \cdots \oplus f_m)^{\odot m} \ge f_1 \odot \cdots \odot f_m$, and hence $\langle (f_1 \odot \cdots \odot f_m, 0) \rangle \subset \langle (f_1 \oplus \cdots \oplus f_m, 0) \rangle$ by \cite[Proposition 2.2(iii)]{Joo=Mincheva1}.

Because for any $x \in \bigcap_{j = 1}^m \boldsymbol{V}((f_j, 0))$, $f_1(x) = \cdots = f_m(x) = 0$ hold, we have $\boldsymbol{V}((f_1 \odot \cdots \odot f_m, 0)) \supset \bigcap_{j = 1}^m \boldsymbol{V}((f_j, 0))$.

Conversely, for any $x \in \boldsymbol{V}((f_1 \odot \cdots \odot f_m, 0))$, since $f_i \ge 0$ and $f_1(x) \odot \cdots \odot f_m(x) = 0$, we have $f_1(x) = \cdots = f_m(x) = 0$ hold.
Thus $\boldsymbol{V}((f_1 \odot \cdots \odot f_m, 0)) \subset \bigcap_{j = 1}^m \boldsymbol{V}((f_j, 0))$.

$(2)$ follows from Corollary \ref{cor:decomposition3} and Lemma \ref{lem:distance}.
\end{proof}

\begin{lemma}
	\label{lem6}
Let $V$ and $W$ be nonempty subsets of $\boldsymbol{R}^n$.
Assume $f, g \in \overline{\boldsymbol{T}(X_1, \ldots, X_n)}$ satisfy the following conditions: $\boldsymbol{V}((f, 0)) = \overline{V}$, $\boldsymbol{V}((g, 0)) = \overline{W}$, and there exist $k_1, k_2 \in \boldsymbol{Z}_{>0}$ such that $f(x)^{\odot k_1} \ge \operatorname{dist} \left(x, \overline{V} \right)$ and $g(x)^{\odot k_2} \ge \operatorname{dist} \left(x, \overline{W} \right)$ hold for any $x \in \boldsymbol{R}^n$.
Then the following hold:

$(1)$ $\boldsymbol{V}\left( \left( \left(f^{\odot (-1)} \oplus g^{\odot (-1)} \right)^{\odot (-1)}, 0 \right) \right) = \overline{V} \cup \overline{W}$,

$(2)$ for $k := \operatorname{max}\{ k_1, k_2 \}$, $\left(f(x)^{\odot (-1)} \oplus g(x)^{\odot (-1)}\right)^{\odot (-k)} \ge \operatorname{dist} \left(x, \overline{V} \cup \overline{W} \right)$ holds for any $x \in \boldsymbol{R}^n$, and

$(3)$ $\boldsymbol{E} \left(\overline{V} \cup \overline{W} \right) = \left\langle \left( \left(f^{\odot (-1)} \oplus g^{\odot (-1)} \right)^{\odot (-1)}, 0 \right) \right\rangle$.
\end{lemma}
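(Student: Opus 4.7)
The plan is to rewrite the tropical element $h := (f^{\odot(-1)} \oplus g^{\odot(-1)})^{\odot(-1)}$ in classical terms as the pointwise minimum $h(x) = \min\{f(x), g(x)\}$, and to exploit the nonnegativity of $f$ and $g$. Indeed, the hypothesis $f(x)^{\odot k_1} \ge \operatorname{dist}(x, \overline{V}) \ge 0$ reads classically as $k_1 f(x) \ge 0$, so $f \ge 0$ pointwise, and likewise $g \ge 0$. Since $f, g$ take only finite real values, they are invertible in the semifield, and $f^{\odot(-1)} \oplus g^{\odot(-1)}$ is also finite-valued and hence invertible, so $h$ genuinely lies in $\overline{\boldsymbol{T}(X_1, \ldots, X_n)}$, is pointwise equal to $\min(f, g)$, and is everywhere $\ge 0$.

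For (1), I would simply observe that $h(x) = 0$ iff $\min\{f(x), g(x)\} = 0$, which, because both functions are $\ge 0$, holds iff $f(x) = 0$ or $g(x) = 0$. By the zero-locus hypotheses on $f$ and $g$, this set is exactly $\overline{V} \cup \overline{W}$.

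For (2), I would fix $x \in \boldsymbol{R}^n$ and argue by cases on which of $f(x), g(x)$ is smaller. If $f(x) \le g(x)$, then $h(x) = f(x) \ge 0$, and since $k \ge k_1$,
\begin{align*}
h(x)^{\odot k} = k f(x) \ge k_1 f(x) = f(x)^{\odot k_1} \ge \operatorname{dist}\bigl(x, \overline{V}\bigr) \ge \operatorname{dist}\bigl(x, \overline{V} \cup \overline{W}\bigr);
\end{align*}
the case $g(x) \le f(x)$ is symmetric using $k \ge k_2$.

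For (3), I would invoke Corollary \ref{cor5} with the nonempty set $V \cup W$ (whose closure is $\overline{V} \cup \overline{W}$) and the function $h$: parts (1) and (2) above verify precisely the two hypotheses required by that corollary, yielding $\boldsymbol{E}(\overline{V} \cup \overline{W}) = \langle (h, 0) \rangle$. I do not anticipate a serious obstacle here; the only point requiring care is ensuring that each tropical inverse appearing in the definition of $h$ actually makes sense in the semifield, which is exactly the pointwise-finiteness remark made in the first paragraph.
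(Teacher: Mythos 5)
Your proposal is correct and follows essentially the same route as the paper's proof: identify $\left(f^{\odot(-1)} \oplus g^{\odot(-1)}\right)^{\odot(-1)}$ with the pointwise minimum of the nonnegative functions $f$ and $g$ to get (1), run the same two-case comparison of $f(x)$ and $g(x)$ together with $\operatorname{dist}\left(x, \overline{V} \cup \overline{W}\right) \le \operatorname{dist}\left(x, \overline{V}\right), \operatorname{dist}\left(x, \overline{W}\right)$ for (2), and then apply Corollary \ref{cor5} for (3). The only (harmless) difference is that you explicitly derive $f, g \ge 0$ from the distance hypotheses, which the paper simply asserts.
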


\begin{proof}
$(1)$ and $(2)$ imply $(3)$ by Corollary \ref{cor5}.

For any $x \in \overline{V}$ and $y \in \overline{W}$, we have $f(x) = 0$ and $g(y) = 0$.
Thus for any $z \in \overline{V} \cup \overline{W}$, at least one of $f(z)$ and $g(z)$ is zero.
By assumption, $f \ge 0$ and $g \ge 0$ hold, and hence we have $f^{\odot (-1)} \le 0$ and $g^{\odot (-1)} \le 0$.
From this, $f(z)^{\odot (-1)} \oplus g(z)^{\odot (-1)} = 0$ holds.
Thus we have $\boldsymbol{V} \left( \left( \left(f^{\odot (-1)} \oplus g^{\odot (-1)} \right)^{\odot (-1)}, 0 \right)\right) \supset \overline{V} \cup \overline{W}$ and we also have $\left(\left(f^{\odot (-1)} \oplus g^{\odot (-1)} \right)^{\odot (-1)}, 0 \right) \in \boldsymbol{E}\left(\overline{V} \cup \overline{W}\right)$, and thus $\left\langle \left(\left(f^{\odot (-1)} \oplus g^{\odot (-1)}\right)^{\odot (-1)}, 0\right) \right\rangle \subset \boldsymbol{E}\left(\overline{V} \cup \overline{W}\right)$.
Since $\left( f(x)^{\odot (-1)} \oplus g(x)^{\odot (-1)} \right)^{\odot (-1)} = 0$ implies $f(x) = 0$ or $g(x) = 0$, we have $\boldsymbol{V}\left(\left(\left(f^{\odot (-1)} \oplus g^{\odot (-1)}\right)^{\odot (-1)}, 0\right)\right) \subset \overline{V} \cup \overline{W}$.

Because $\overline{V} \cup \overline{W} \supset \overline{V}, \overline{W}$, we have $\operatorname{dist}\left(x, \overline{V} \cup \overline{W} \right) \le \operatorname{dist}\left(x, \overline{V}\right), \operatorname{dist}\left(x, \overline{W} \right)$.
Hence for any $x \in \boldsymbol{R}^n$, if $f(x) \ge g(x)$, then
\begin{align*}
\left( f(x)^{\odot (-1)} \oplus g(x)^{\odot (-1)} \right)^{\odot (-k)} = g(x)^{\odot k} \ge \operatorname{dist}\left(x, \overline{W} \right) \ge \operatorname{dist}\left(x, \overline{V} \cup \overline{W}\right)
\end{align*}
hold; if $f(x) \le g(x)$, then
\begin{align*}
\left( f(x)^{\odot (-1)} \oplus g(x)^{\odot (-1)} \right)^{\odot (-k)} = f(x)^{\odot k} \ge \operatorname{dist}\left(x, \overline{V}\right) \ge \operatorname{dist}\left(x, \overline{V} \cup \overline{W}\right)
\end{align*}
hold.
\end{proof}

Since $\boldsymbol{E}(\varnothing) = \overline{\boldsymbol{T}(X_1, \ldots, X_n)}^2$, by Corollary \ref{cor4} and Lemmas \ref{lem3}, \ref{lem5}, \ref{lem6}, we have Theorem \ref{thm:main}.
Also Corollary \ref{cor1} follows from Corollary \ref{cor4} and Theorem \ref{thm:main}.

\begin{lemma}
	\label{lem7}
For $f_1, \ldots, f_n \in \overline{\boldsymbol{T}(Y_1, \ldots, Y_m)}\setminus \{ -\infty \}$, let $\theta : \boldsymbol{R}^m \to \boldsymbol{R}^n; y \mapsto (f_1(y), \ldots, f_n(y))$.
Then $\theta$ maps a $\boldsymbol{R}$-rational polyhedral set in $\boldsymbol{R}^m$ to a finite union of $\boldsymbol{R}$-rational polyhedral sets in $\boldsymbol{R}^n$.
\end{lemma}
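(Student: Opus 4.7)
The idea is to reduce to the case where $\theta$ is affine, by first subdividing $\boldsymbol{R}^m$ along the domains of linearity of the $f_i$'s, and then invoking Fourier--Motzkin elimination on each piece.

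First, I would observe that each $f_i$ is piecewise affine with integer slopes on a suitable $\boldsymbol{R}$-rational polyhedral subdivision of $\boldsymbol{R}^m$. Concretely, write $f_i = g_i \odot h_i^{\odot(-1)}$ with $g_i, h_i \in \overline{\boldsymbol{T}[Y_1,\ldots,Y_m]} \setminus \{-\infty\}$, and let
\begin{align*}
g_i = \bigoplus_{\alpha \in A_i} a_\alpha \odot Y^{\odot\alpha}, \qquad h_i = \bigoplus_{\beta \in B_i} b_\beta \odot Y^{\odot\beta}
\end{align*}
be representatives (with $A_i, B_i \subset \boldsymbol{Z}^m$ finite and $a_\alpha, b_\beta \in \boldsymbol{R}$). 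For each pair $(\alpha^\ast, \beta^\ast) \in A_i \times B_i$, the locus where $\alpha^\ast$ attains the maximum in $g_i$ and $\beta^\ast$ attains the maximum in $h_i$ is the $\boldsymbol{R}$-rational polyhedral set cut out by the inequalities $(\alpha^\ast-\alpha)\cdot y \ge a_\alpha-a_{\alpha^\ast}$ and $(\beta^\ast-\beta)\cdot y \ge b_\beta-b_{\beta^\ast}$. On this cell, $f_i(y) = (a_{\alpha^\ast}-b_{\beta^\ast}) + (\alpha^\ast-\beta^\ast)\cdot y$ is affine with integer slope and real constant term.

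Next, I would form the common refinement $\mathcal{C}=\{C_1,\ldots,C_s\}$ of these subdivisions over all $i=1,\ldots,n$; this is still a finite collection of $\boldsymbol{R}$-rational polyhedral sets covering $\boldsymbol{R}^m$, and on each $C_j$ every $f_i$ is represented by some affine function $y \mapsto u_{i,j}\cdot y + c_{i,j}$ with $u_{i,j}\in \boldsymbol{Z}^m$ and $c_{i,j}\in\boldsymbol{R}$. Hence on each $C_j$, $\theta$ restricts to an affine map $\theta_j(y) = M_j y + v_j$ where $M_j$ is the $n\times m$ integer matrix with rows $u_{i,j}$ and $v_j=(c_{1,j},\ldots,c_{n,j}) \in \boldsymbol{R}^n$. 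For an $\boldsymbol{R}$-rational polyhedral set $P \subset \boldsymbol{R}^m$, the intersection $P \cap C_j$ is again $\boldsymbol{R}$-rational polyhedral, and $\theta(P) = \bigcup_{j=1}^{s} \theta_j(P \cap C_j)$, reducing the problem to showing each summand is an $\boldsymbol{R}$-rational polyhedral set.

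Finally, I would verify that the image of an $\boldsymbol{R}$-rational polyhedral set under such an affine map is $\boldsymbol{R}$-rational polyhedral. Writing $P \cap C_j = \{ y \in \boldsymbol{R}^m \,|\, Ny \le d\}$ with $N$ a rational matrix and $d \in \boldsymbol{R}^{\bullet}$, the image $\theta_j(P\cap C_j)$ is the projection onto the $x$-coordinates of $\{(x,y) : x = M_j y + v_j, \, Ny \le d\}$. Applying Fourier--Motzkin elimination to remove the $y$-variables, each output inequality is a nonnegative rational combination of the original ones, so the coefficients on $x$ are rational while the constants remain real. The resulting finite union of such images is therefore a finite union of $\boldsymbol{R}$-rational polyhedral sets, as required.

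The only delicate point is checking that the common refinement really is $\boldsymbol{R}$-rational (not merely polyhedral), which is why I took care to extract representatives $g_i, h_i$ over $\boldsymbol{T}[Y_1^{\pm 1},\ldots,Y_m^{\pm 1}]$ and to record that the slopes appearing in the defining inequalities of each cell are integral, while real constants are harmless by the definition of $\boldsymbol{R}$-rational polyhedral set. This is the main technical hurdle; once it is in place, Fourier--Motzkin finishes the argument cleanly.
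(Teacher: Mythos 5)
Your proof is correct and follows essentially the same route as the paper: subdivide $\boldsymbol{R}^m$ into the common refinement of the domains of linearity of the $f_i$, observe each cell is $\boldsymbol{R}$-rational polyhedral and $\theta$ is integral affine on it, and conclude that each image piece is $\boldsymbol{R}$-rational polyhedral. You additionally supply the Fourier--Motzkin justification for the final step (rational coefficients, real constants are preserved under projection), which the paper simply asserts; this is a welcome extra detail rather than a different approach.
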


\begin{proof}
Each $f_i$ breaks $\boldsymbol{R}^m$ into finitely many closed subsets on each which $f_i$ is integral affine.
Since the number of $f_i$ is finite, we can break $\boldsymbol{R}^m$ into finitely many closed subsets on each of which each $f_i$ is integral affine.
As such each closed sunset is also a $\boldsymbol{R}$-rational polyhedral set, its image by $\theta$ is also a $\boldsymbol{R}$-rational polyhedral set, and hence we have the conclusion.
\end{proof}

\begin{cor}
	\label{cor7}
Let $V$ (resp. $W$) be a subset of $\boldsymbol{R}^n$ (resp. $\boldsymbol{R}^m$).
Let $\psi : \overline{\boldsymbol{T}(X_1, \ldots, X_n)} / \boldsymbol{E}(V) \to \overline{\boldsymbol{T}(Y_1, \ldots, Y_m)} / \boldsymbol{E}(W)$ be a $\boldsymbol{T}$-algebra homomorphism and $\pi_{\boldsymbol{E}(V)} : \overline{\boldsymbol{T}(X_1, \ldots, X_n)} \twoheadrightarrow \overline{\boldsymbol{T}(X_1, \ldots, X_n)} / \boldsymbol{E}(V)$ and $\pi_{\boldsymbol{E}(W)} : \overline{\boldsymbol{T}(Y_1, \ldots, Y_m)} \twoheadrightarrow \overline{\boldsymbol{T}(Y_1, \ldots, Y_m)} / \boldsymbol{E}(W)$ the natural surjective $\boldsymbol{T}$-algebra homomorphisms.
Fix an element $f_i \in \pi_{\boldsymbol{E}(W)}^{-1}\left(\psi \left(\pi_{\boldsymbol{E}(V)}(X_i) \right)\right) \setminus \{ -\infty \}$ for any $i$.
Let $\theta : \boldsymbol{R}^m \to \boldsymbol{R}^n; y \mapsto (f_1(y), \ldots, g_n(y))$. 
If $\psi$ is injective, $\theta(\overline{W})$ is closed and $\boldsymbol{E}(W)$ is finitely generated as a congruence, then $\boldsymbol{E}(V)$ is finitely generated as a congruence.
In particular, when $\overline{\boldsymbol{T}(X_1, \ldots, X_n)} / \boldsymbol{E}(V)$ is isomorphic to $\overline{\boldsymbol{T}(Y_1, \ldots, Y_m)} / \boldsymbol{E}(W)$ as a $\boldsymbol{T}$-algebra, $\boldsymbol{E}(V)$ is finitely generated as a congruence if and only if so is $\boldsymbol{E}(W)$.
\end{cor}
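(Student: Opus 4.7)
The plan is to reduce Corollary~\ref{cor7} to Theorem~\ref{thm:main} and Lemma~\ref{lem7} by realizing $\psi$ as substitution of the $f_i$ followed by the quotient, and then tracing the induced identification through congruence varieties.

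First I would introduce the composite semiring homomorphism $\Phi := \psi \circ \pi_{\boldsymbol{E}(V)} : \overline{\boldsymbol{T}(X_1, \ldots, X_n)} \to \overline{\boldsymbol{T}(Y_1, \ldots, Y_m)}/\boldsymbol{E}(W)$. From $\psi(\pi_{\boldsymbol{E}(V)}(X_i)) = \pi_{\boldsymbol{E}(W)}(f_i)$ one reads off $\Phi(g) = \pi_{\boldsymbol{E}(W)}(g(f_1, \ldots, f_n))$ for every $g \in \overline{\boldsymbol{T}(X_1, \ldots, X_n)}$. Evaluating modulo $\boldsymbol{E}(W)$ at each $y \in W$ then shows $\operatorname{Ker}(\Phi) = \boldsymbol{E}(\theta(W))$. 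Since $\psi$ is injective, the fundamental homomorphism theorem yields $\boldsymbol{E}(V) = \operatorname{Ker}(\Phi) = \boldsymbol{E}(\theta(W))$.

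Next, because tropical rational functions are continuous on $\boldsymbol{R}^n$, for any $S \subset \boldsymbol{R}^n$ we have $\boldsymbol{E}(S) = \boldsymbol{E}(\overline{S})$, and the closed-set description of the topology on $\boldsymbol{R}^n$ (\cite[Proposition 3.9]{JuAe4}) gives $\boldsymbol{V}(\boldsymbol{E}(S)) = \overline{S}$. Applying $\boldsymbol{V}$ to $\boldsymbol{E}(V) = \boldsymbol{E}(\theta(W))$ then yields $\overline{V} = \overline{\theta(W)}$. Continuity of $\theta$ provides the inclusion $\theta(\overline{W}) \subset \overline{\theta(W)}$, while the standing assumption that $\theta(\overline{W})$ is closed provides the reverse, so $\overline{V} = \theta(\overline{W})$.

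To finish, if $\boldsymbol{E}(W)$ is finitely generated, Theorem~\ref{thm:main} exhibits $\overline{W}$ as a finite union of $\boldsymbol{R}$-rational polyhedral sets, Lemma~\ref{lem7} then makes $\theta(\overline{W}) = \overline{V}$ a finite union of such sets, and a second application of Theorem~\ref{thm:main} yields finite generation of $\boldsymbol{E}(V)$. For the ``in particular'' clause, finite generation of $\boldsymbol{E}(W)$ already forces $\theta(\overline{W})$ to be closed (finite unions of $\boldsymbol{R}$-rational polyhedral sets being closed), so the closedness hypothesis is automatic; the reverse implication uses $\psi^{-1}$ together with an analogously constructed map $\theta^{\prime} : \boldsymbol{R}^n \to \boldsymbol{R}^m$. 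The most delicate step I anticipate is the identification $\operatorname{Ker}(\Phi) = \boldsymbol{E}(\theta(W))$, which requires checking that substitution $g \mapsto g(f_1, \ldots, f_n)$ descends to a well-defined operation on $\overline{\boldsymbol{T}(X_1, \ldots, X_n)}$ (not only on representatives), together with keeping careful track of exactly where the closedness hypothesis on $\theta(\overline{W})$ enters the topological argument.
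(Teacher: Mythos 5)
Your proposal is correct and follows essentially the same route as the paper: both reduce to writing $\overline{V}=\theta(\overline{W})$, decomposing $\overline{W}$ into finitely many $\boldsymbol{R}$-rational polyhedral sets via Corollary \ref{cor4}/Theorem \ref{thm:main}, pushing forward with Lemma \ref{lem7}, and applying Theorem \ref{thm:main} again. The only difference is that the paper obtains the identity $\overline{V}=\theta(\overline{W})$ by citing \cite[Theorem 3.15]{JuAe4}, whereas you rederive it directly from the kernel-congruence computation $\operatorname{Ker}(\psi\circ\pi_{\boldsymbol{E}(V)})=\boldsymbol{E}(\theta(W))$ together with the closedness hypothesis; this is a sound substitute for the citation.
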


\begin{proof}
There exist finitely many closed half-spaces $H_{11}, \ldots, H_{1k_1}, H_{21}, \ldots, H_{jk_j}$ in $\boldsymbol{R}^n$ that have integral normal vectors, such that $\bigcup_{a = 1}^j \bigcap_{b = 1}^{k_a} H_{ab} = \overline{W}$ by Corollary \ref{cor4}.
By \cite[Theorem 3.15]{JuAe4}, we have
\begin{align*}
\overline{V} = \theta \left( \overline{W} \right) = \theta \left(\bigcup_{a = 1}^j \bigcap_{b = 1}^{k_a} H_{ab} \right) = \bigcup_{a = 1}^j \theta \left( \bigcap_{b = 1}^{k_a} H_{ab} \right).
\end{align*}
By Lemma \ref{lem7} and Theorem \ref{thm:main}, we have the conclusion.
\end{proof}

\begin{proof}[Proof of Corollary \ref{cor3}]
If $V$ is empty, then we have
\begin{align*}
\langle (0, -\infty) \rangle_{\overline{\boldsymbol{T}[X_1, \ldots, X_n]}} = \overline{\boldsymbol{T}[X_1, \ldots, X_n]}^2 = \boldsymbol{E}(\varnothing)_1
\end{align*}
by Lemma \ref{lem3}.
For any $(g, h) \in \overline{\boldsymbol{T}[X_1, \ldots, X_n]}^2$, we have $(-\infty, -\infty) = (g \odot (-\infty), h \odot (-\infty)) \in \Delta \subset \langle (0, -\infty) \rangle_{\overline{\boldsymbol{T}[X_1, \ldots, X_n]}}$, and hence
\begin{align*}
\boldsymbol{E}(\varnothing)_1 =&~ \left\{ (g, h) \in \overline{\boldsymbol{T}[X_1, \ldots, X_n]}^2 \,\middle|\,\right.\\
&~\left.\exists k \in \boldsymbol{Z}_{>0}: \left(g \odot (-\infty)^{\odot k}, h \odot (-\infty)^{\odot k} \right) \in \langle (0, -\infty) \rangle_{\overline{\boldsymbol{T}[X_1, \ldots, X_n]}} \right\}.
\end{align*}
Cleary $0 \ge -\infty$ on $\boldsymbol{T}^n$.

Assume that $V$ is nonempty.
Then, by Lemmas \ref{lem2}, \ref{lem4}, \ref{lem5} and \ref{lem6}, there exists $f \in \overline{\boldsymbol{T}(X_1, \ldots, X_n)} \setminus \{ -\infty \}$ such that $\boldsymbol{E}(V) = \langle (f, 0) \rangle$, $f \ge 0$ on $\boldsymbol{R}^n$ and for any $(g, h) \in \boldsymbol{E}(V) \setminus \{ (-\infty, -\infty) \}$, there exists $k \in \boldsymbol{Z}_{> 0}$ such that
\begin{align*}
0 \le g \odot h^{\odot (-1)} \oplus 0 \le f^{\odot k}
\end{align*}
and
\begin{align*}
0 \le g^{\odot (-1)} \odot h \oplus 0 \le f^{\odot k}
\end{align*}
on $\boldsymbol{R}^n$.
There exist $f_1, f_2 \in \overline{\boldsymbol{T}[X_1, \ldots, X_n]} \setminus \{ -\infty \}$ such that $f = f_1 \odot f_2^{\odot (-1)}$.
Therefore we have
\begin{align*}
h \odot f_2^{\odot k} \le (g \oplus h) \odot f_2^{\odot k} \le h \odot f_1^{\odot k}
\end{align*}
and
\begin{align*}
g \odot f_2^{\odot k} \le (g \oplus h) \odot f_2^{\odot k} \le g \odot f_1^{\odot k}
\end{align*}
on $\boldsymbol{R}^n$.
Note that, by \cite[Lemma 3.7.4]{Giansiracusa=Giansiracusa2}, these inequalities are also true on $\boldsymbol{T}^n$ if all these are tropical polynomial functions.
Thus, since
\begin{align*}
\left( h \odot f_2^{\odot k}, h \odot f_1^{\odot k} \right), \left( g \odot f_2^{\odot k}, g \odot f_1^{\odot k} \right) \in \langle (f_1, f_2) \rangle_{\overline{\boldsymbol{T}(X_1, \ldots, X_n)}},
\end{align*}
we have
\begin{align*}
\left( h \odot f_2^{\odot k}, (g \oplus h) \odot f_2^{\odot k} \right), \left( g \odot f_2^{\odot k}, (g \oplus h) \odot f_2^{\odot k} \right) \in \langle (f_1, f_2) \rangle_{\overline{\boldsymbol{T}(X_1, \ldots, X_n)}}
\end{align*}
by \cite[Proposition 2.2(iii)]{Joo=Mincheva1}.
This concludes that $\left( g \odot f_2^{\odot k}, h \odot f_2^{\odot k} \right) \in \langle (f_1, f_2) \rangle_{\overline{\boldsymbol{T}(X_1, \ldots, X_n)}}$.
Note that if both $g$ and $h$ are tropical polynomial functions, we have also $\left( g \odot f_2^{\odot k}, h \odot f_2^{\odot k} \right) \in \langle (f_1, f_2) \rangle_{\overline{\boldsymbol{T}[X_1, \ldots, X_n]}}$.

Let $(g, h) \in \boldsymbol{E}(V)_1$.
By definition, we can regard that $(g, h) \in \boldsymbol{E}(V)$, and thus there exists $k \in \boldsymbol{Z}_{>0}$ such that $\left( g \odot f_2^{\odot k}, h \odot f_2^{\odot k} \right) \in \langle (f_1, f_2) \rangle_{\overline{\boldsymbol{T}[X_1, \ldots, X_n]}}$ by the argument above if $(g, h) \not= (-\infty, -\infty)$.
Clearly $(-\infty, -\infty) = (-\infty \odot f_2, -\infty \odot f_2) \in \langle (f_1, f_2) \rangle_{\overline{\boldsymbol{T}[X_1, \ldots, X_n]}}$.

For $(g, h) \in \overline{\boldsymbol{T}[X_1, \ldots, X_n]}^2$, we assume that there exists $k \in \boldsymbol{Z}_{>0}$ such that $\left( g \odot f_2^{\odot k}, h \odot f_2^{\odot k} \right) \in \langle (f_1, f_2) \rangle_{\overline{\boldsymbol{T}[X_1, \ldots, X_n]}}$.
By \cite[Corollary 2.12]{Bertram=Easton}, for any $x \in V$, we have $g(x) \odot f_2(x)^{\odot k} = h(x) \odot f_2(x)^{\odot k}$.
As $f_2(x) \in \boldsymbol{R}$, we have $g(x) = h(x)$, and thus $(g, h) \in \boldsymbol{E}(V)_1$.

Finally because $f \ge 0$ on $\boldsymbol{R}^n$, we ahve $f_1 \ge f_2$ on $\boldsymbol{R}^n$.
By \cite[Lemma 3.7.4]{Giansiracusa=Giansiracusa2} again, this inequality also holds on $\boldsymbol{T}^n$.
\end{proof}

Now we can characterize rational function semifields of tropical curves.
By Theorem \ref{thm:main}, the following lemma is clear:

\begin{lemma}
	\label{lem8}
Let $(x_1, \ldots, x_n)$ be a point of $\boldsymbol{R}^n$ and $(s_1, \ldots, s_n) \in \boldsymbol{Z}^n$ a vector other than $(0, \ldots, 0)$.
For $l \in \boldsymbol{R}_{\ge 0} \cup \{ \infty \}$ and the interval $e_l := \overline{\{ (x_1 + t s_1, \ldots, x_n + t s_n) \,|\, t \in [0, l) \}}$, the congruence $\boldsymbol{E}(e_l)$ is finitely generated as a congruence.
\end{lemma}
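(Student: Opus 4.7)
The plan is to apply Theorem \ref{thm:main} directly: since $e_l$ is by construction its own closure, it suffices to exhibit $e_l$ as a finite union (indeed a single one) of $\boldsymbol{R}$-rational polyhedral sets in $\boldsymbol{R}^n$.

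First I would dispose of the degenerate case $l = 0$, in which $e_l = \varnothing$ and the conclusion follows from Theorem \ref{thm:main} with the empty union (alternatively via Lemma \ref{lem3}, since $\boldsymbol{E}(\varnothing) = \overline{\boldsymbol{T}(X_1, \ldots, X_n)}^2$ is generated by $(0, -\infty)$). For the main case $0 < l \le \infty$, set $x := (x_1, \ldots, x_n)$ and $s := (s_1, \ldots, s_n) \in \boldsymbol{Z}^n \setminus \{O\}$, so that $e_l = \{ x + ts \,|\, t \in [0, l] \}$ when $l < \infty$ and $e_l = \{ x + ts \,|\, t \in [0, \infty) \}$ when $l = \infty$.

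Next I would cut $e_l$ out of $\boldsymbol{R}^n$ by finitely many linear inequalities with rational (in fact integer) left-hand coefficients. Choose integer vectors $v_1, \ldots, v_{n-1} \in \boldsymbol{Z}^n$ forming a basis of the orthogonal complement of $s$ (which exists since $s$ is a nonzero integer vector). The line through $x$ in direction $s$ is then carved out by the $\boldsymbol{R}$-rational equations $v_i \cdot y = v_i \cdot x$ for $i = 1, \ldots, n-1$, each rewritten as two inequalities. The condition $t \ge 0$ becomes $s \cdot y \ge s \cdot x$, and, when $l < \infty$, the condition $t \le l$ becomes $s \cdot y \le s \cdot x + l(s \cdot s)$; when $l = \infty$, one simply drops the latter. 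Since the coefficient vectors $v_i$ and $s$ have integer entries, this exhibits $e_l$ as an $\boldsymbol{R}$-rational polyhedral set, which by Theorem \ref{thm:main} gives the finite generation of $\boldsymbol{E}(e_l)$.

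There is essentially no obstacle here; the only point that requires a line of justification is the existence of an integer basis of $s^\perp$, which is immediate from the fact that $s \in \boldsymbol{Z}^n$ (for instance via the Smith normal form or by extending $s$ to a $\boldsymbol{Q}$-basis and clearing denominators). Everything else is a routine polyhedral description, and this is presumably why the author calls the lemma ``clear.''
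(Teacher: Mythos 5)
Your proposal is correct and matches the paper exactly: the paper offers no written proof, simply declaring the lemma ``clear'' by Theorem \ref{thm:main}, and the routine polyhedral description you supply (integer normal vectors spanning $s^{\perp}$ plus the one or two inequalities bounding $t$, with real constant terms permitted by the definition of $\boldsymbol{R}$-rationality) is precisely the justification being left implicit. Your treatment of the degenerate case $l = 0$ via $\boldsymbol{E}(\varnothing)$ is also consistent with how the paper handles the empty set in its deduction of Theorem \ref{thm:main}.
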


Let $\Gamma$ be a tropical curve.
By \cite[Theorem 1.1]{JuAe2}, $\operatorname{Rat}(\Gamma)$ is finitely generated as a semifield over $\boldsymbol{T}$.
For any generators $f_1, \ldots, f_n$ other than $-\infty$ of $\operatorname{Rat}(\Gamma)$ as a semifield over $\boldsymbol{T}$, by \cite[Lemma 3.11]{JuAe4}, the correspondence $X_i \mapsto f_i$ induces the surjective $\boldsymbol{T}$-algebra homomorphism $\psi : \overline{\boldsymbol{T}(X_1, \ldots, X_n)} \twoheadrightarrow \operatorname{Rat}(\Gamma)$ mapping each $X_i$ to $f_i$.

\begin{cor}
	\label{cor8}
In the above setting, the kernel congruence $\operatorname{Ker}(\psi)$ is finitely generated as a congruence.
\end{cor}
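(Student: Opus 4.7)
The plan is to realize $\operatorname{Ker}(\psi)$ as $\boldsymbol{E}(V)$ for a suitable $V \subset \boldsymbol{R}^n$ and then invoke Theorem~\ref{thm:main}. Consider the map sending $p \in \Gamma \setminus \Gamma_{\infty}$ to $\phi(p) := (f_1(p), \ldots, f_n(p)) \in \boldsymbol{R}^n$, which is well-defined because each $f_i$, not being $-\infty$, takes the values $\pm\infty$ only at points of $\Gamma_{\infty}$. Set $V := \phi(\Gamma \setminus \Gamma_{\infty})$.

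First I will verify that $\operatorname{Ker}(\psi) = \boldsymbol{E}(V)$. For $g, h \in \overline{\boldsymbol{T}(X_1, \ldots, X_n)}$, the equality $\psi(g) = \psi(h)$ in $\operatorname{Rat}(\Gamma)$ translates to the coincidence on $\Gamma$ of two rational functions whose restrictions to $\Gamma \setminus \Gamma_{\infty}$ are $g \circ \phi$ and $h \circ \phi$, respectively. Using the continuity of rational functions on $\Gamma$ and the density of $\Gamma \setminus \Gamma_{\infty}$ in $\Gamma$, this is equivalent to $g(\phi(p)) = h(\phi(p))$ for every $p \in \Gamma \setminus \Gamma_{\infty}$, i.e.\ to $(g, h) \in \boldsymbol{E}(V)$.

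Second, I will show that $V$ is a finite union of $\boldsymbol{R}$-rational polyhedral sets in $\boldsymbol{R}^n$. I pick a model $(G, l)$ for $\Gamma$ refined enough that every $f_i$ is affine with integer slope on each edge. Then on each edge $e$ of this model the restriction $\phi|_e$ is an affine map whose slope vector lies in $\boldsymbol{Z}^n$, so its image in $\boldsymbol{R}^n$ is either a single point or a set of the precise form $e_l$ appearing in Lemma~\ref{lem8}, hence an $\boldsymbol{R}$-rational polyhedral set. Because the number of edges is finite, $V$ is a finite union of such sets.

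Finally, Theorem~\ref{thm:main} applied to $V$ gives that $\boldsymbol{E}(V) = \operatorname{Ker}(\psi)$ is finitely generated as a congruence. The main technical point I anticipate is the first step: one must confirm that $\psi(g)$, a priori defined through the $\boldsymbol{T}$-algebra structure of $\operatorname{Rat}(\Gamma)$, really agrees with the continuous extension to $\Gamma$ of the composition $g \circ \phi$ on $\Gamma \setminus \Gamma_{\infty}$, so that $\operatorname{Ker}(\psi)$ is controlled by the values on $V$ alone. Once this and the conventions for evaluating elements of $\overline{\boldsymbol{T}(X_1, \ldots, X_n)}$ at points of $\boldsymbol{T}^n$ by continuity are pinned down, the remaining steps are routine and rely only on the piecewise integral affine structure of rational functions on tropical curves.
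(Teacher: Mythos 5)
Your proof is correct and follows essentially the same route as the paper: identify $\operatorname{Ker}(\psi)$ with $\boldsymbol{E}$ of the image of the evaluation map $x \mapsto (f_1(x), \ldots, f_n(x))$, observe that this image is a finite union of points, segments and rays with integral direction vectors (hence $\boldsymbol{R}$-rational polyhedral sets), and conclude by Theorem \ref{thm:main}. The only difference is that the paper outsources the identification $\operatorname{Ker}(\psi) = \boldsymbol{E}(\boldsymbol{V}(\operatorname{Ker}(\psi)))$ and the description of $\boldsymbol{V}(\operatorname{Ker}(\psi))$ as this image to \cite[Proposition 3.12]{JuAe4}, whereas you verify it directly; both are fine.
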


\begin{proof}
By \cite[Proposition 3.12(5)]{JuAe4}, we have the isometry $\theta : \Gamma \setminus \Gamma_{\infty} \to \boldsymbol{V}(\operatorname{Ker}(\psi)) ; x \mapsto (f_1(x), \ldots, f_n(x))$.
Note that $\boldsymbol{V}(\operatorname{Ker}(\psi))$ has the metric defined by the lattice length.
Since $f_1, \ldots, f_n$ are rational functions on $\Gamma$, we can break $\Gamma$ into finitely many intervals $e_1, \ldots, e_m$ each which has a finite endpoint $x_j$ and on each which every $f_i$ has a constant slope $s_{ji}$ from $x_j$ to the other endpoint of $e_j$.
Hence the image $\theta(e_j)$ is also an interval and is written as $\overline{\{ (f_1(x_j) + ts_{j1}, \ldots, f_n(x_j) + ts_{jn}) \,|\, t \in [0, l(e_j)) \}}$, where $l(e_j)$ denotes the length of $e_j$.
By Lemma \ref{lem8} and Theorem \ref{thm:main}, we have the conclusion.
\end{proof}

Let $V$ be a closed connected subset of $\boldsymbol{R}^n$ of geometric dimension one whose associated congruence $\boldsymbol{E}(V)$ is finitely generated as a congruence.
By Corollary \ref{cor4}, $V = \boldsymbol{V}(\boldsymbol{E}(V))$ has the metric defined by the lattice length.

\begin{lemma}
	\label{lem9}
If $n \ge 2$ and there exist no two distinct rays of $V$ whose primitive direction verctors coincide, then for any ray $e$ in $V$, there exist a point $x$ of $e$ and distinct vectors $\boldsymbol{s}_1, \ldots, \boldsymbol{s}_m \in \boldsymbol{R}^n \setminus \{(0, \ldots, 0)\}$ and $f \in \overline{\boldsymbol{T}(X_1, \ldots, X_n)}$ satisfying the following five conditions:

$(1)$ the cone $\{ x \} + \operatorname{Cone}\{ \boldsymbol{s}_1, \ldots, \boldsymbol{s}_m \}$ with the vertex $x$ is of geometric dimension $n$,

$(2)$ the relative interior $\operatorname{Int}(\{ x \} + \operatorname{Cone}\{ \boldsymbol{s}_1, \ldots, \boldsymbol{s}_m \} )$ contains $e_x := \{ x + t\boldsymbol{l} \,|\, t \in \boldsymbol{R}_{\ge 0} \}$, where $\boldsymbol{l}$ is the primitive direction vector of $e$ such that $e_x$ is contained in $e$,

$(3)$ the intersection $(\{ x \} + \operatorname{Cone}\{ \boldsymbol{s}_1, \ldots, \boldsymbol{s}_m \}) \cap (V \setminus e_x)$ is empty, 

$(4)$ $f$ has slope one on $e_x$ in the direction $\boldsymbol{l}$ with lattice length, and

$(5)$ $f$ is constant zero on $V \setminus e_x$.
\end{lemma}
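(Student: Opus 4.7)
By Corollary \ref{cor4}, since $\boldsymbol{E}(V)$ is finitely generated, $V$ is a finite union of $\boldsymbol{R}$-rational polyhedral sets; being closed, connected, and of geometric dimension one, $V$ is therefore a finite union of $\boldsymbol{R}$-rational segments and rays. Let $\boldsymbol{l}$ denote the primitive direction vector of $e$ as in the statement, and let $\boldsymbol{l}_1, \ldots, \boldsymbol{l}_r$ be the primitive direction vectors of the other rays in $V$; by hypothesis $\boldsymbol{l}_i \neq \boldsymbol{l}$ for every $i$. My plan is to construct the cone $C$ and the point $x$ first, and then the rational function $f$.

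Since $n \ge 2$ and the $\boldsymbol{l}_i$ are finitely many and all distinct from $\boldsymbol{l}$, I pick finitely many hyperplanes through the origin with integer normal vectors that strictly separate $\boldsymbol{l}$ from each $\boldsymbol{l}_i$. Intersecting the corresponding ``$\boldsymbol{l}$-side'' closed half-spaces, and adding a few further integer half-spaces if needed to ensure pointedness and full-dimensionality, yields a pointed $n$-dimensional $\boldsymbol{R}$-rational polyhedral cone $C = \operatorname{Cone}\{\boldsymbol{s}_1, \ldots, \boldsymbol{s}_m\}$ with $\boldsymbol{s}_j \in \boldsymbol{Z}^n \setminus \{(0, \ldots, 0)\}$, containing $\boldsymbol{l}$ in its relative interior and excluding every $\boldsymbol{l}_i$. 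Now $V \setminus e_x$ is a bounded part together with finitely many rays in directions $\boldsymbol{l}_i \notin C$. Since $C$ is pointed and $\boldsymbol{l}_i \notin C$ for each $i$, one checks that $\boldsymbol{l} \notin \operatorname{Cone}\{\boldsymbol{l}_i, -\boldsymbol{s}_1, \ldots, -\boldsymbol{s}_m\}$, so the intersection of any such ray with $\{x\} + C$ is bounded; pushing $x$ far enough along $e$ in the direction $\boldsymbol{l}$ therefore makes $\{x\} + C$ disjoint from each such ray as well as from the bounded part of $V \setminus e_x$. This secures conditions $(1)$, $(2)$, $(3)$.

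For $f$: since $\boldsymbol{l}$ is primitive, by Bezout there exists $\boldsymbol{a} \in \boldsymbol{Z}^n$ with $\boldsymbol{a} \cdot \boldsymbol{l} = 1$. Writing $\{x\} + C = \bigcap_{j = 1}^p \{y \in \boldsymbol{R}^n : \boldsymbol{b}_j \cdot (y - x) \ge 0\}$ with $\boldsymbol{b}_j \in \boldsymbol{Z}^n$, let $g \in \overline{\boldsymbol{T}(X_1, \ldots, X_n)}$ be the tropical Laurent polynomial whose value at $y$ is $\max\{0, -\boldsymbol{b}_1 \cdot (y - x), \ldots, -\boldsymbol{b}_p \cdot (y - x)\}$. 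Then $g \equiv 0$ on $\{x\} + C$ and $g > 0$ outside. For a sufficiently large $N \in \boldsymbol{Z}_{>0}$, define $f \in \overline{\boldsymbol{T}(X_1, \ldots, X_n)}$ by $f(y) = \max\{0, \boldsymbol{a} \cdot (y - x) - N g(y)\}$. Along $e_x$, $g(x + t\boldsymbol{l}) = 0$ and $\boldsymbol{a} \cdot (x + t\boldsymbol{l} - x) = t$, so $f(x + t\boldsymbol{l}) = t$, yielding $(4)$. For $(5)$, we need $\boldsymbol{a} \cdot (y - x) \le N g(y)$ on $V \setminus e_x$. On the bounded part, $g$ has a positive minimum by compactness and $\boldsymbol{a} \cdot (y - x)$ is bounded; on each ray $y_0 + s\boldsymbol{l}_i$ with $\boldsymbol{l}_i \notin C$, some $\boldsymbol{b}_j$ satisfies $\boldsymbol{b}_j \cdot \boldsymbol{l}_i < 0$, so $g(y_0 + s\boldsymbol{l}_i)$ grows linearly in $s$ with strictly positive rate while $\boldsymbol{a} \cdot (y_0 + s\boldsymbol{l}_i - x)$ grows at most linearly. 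Combining these finitely many estimates yields a uniform $N$. The main obstacle is precisely this uniform choice of $N$, a slope-comparison argument in the spirit of Lemma \ref{lem:distance}.
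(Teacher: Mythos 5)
Your construction is correct and shares its skeleton with the paper's proof: both take $f=h\odot g^{\odot(-N)}\oplus 0$, i.e.\ $f(y)=\max\{0,\,h(y)-Ng(y)\}$, where $h$ is the truncated integral linear form of slope one along $\boldsymbol{l}$ (your $\boldsymbol{a}\cdot(y-x)$, the paper's $\boldsymbol{l}'\cdot(y-x)\oplus 0$) and $g\ge 0$ is a rational penalty vanishing on $e_x$, the whole point being a uniform comparison $h\le Ng$ off the cone. The differences are in how $g$ and $N$ are obtained. The paper's $g=\bigoplus_{i,j}(f_{ij}\oplus f_{ij}^{\odot(-1)})$ is built from the $2\times 2$ minors $l_j((y)_i-(x)_i)-l_i((y)_j-(x)_j)$, so it vanishes exactly on the line $\{x+t\boldsymbol{l}\,|\,t\in\boldsymbol{R}\}$ and is positively homogeneous about $x$; the constant is then found by comparing $h$ and $g$ on a compact slice $M$ of the unit sphere outside the cone and propagating by homogeneity, giving $h\le kg$ on the complement of the open cone in all of $\boldsymbol{R}^n$. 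Your $g$ vanishes exactly on $\{x\}+C$, and you get $N$ by estimating only along $V\setminus e_x$ (compact piece plus linear growth on the rays with $\boldsymbol{l}_i\notin C$), in the spirit of Lemma \ref{lem:distance}; this works equally well. You also actually construct $x$ and the $\boldsymbol{s}_j$ (separating half-spaces, recession-cone argument, translating $x$ outward along $e$), whereas the paper simply asserts their existence "by assumption." One small patch is needed: the bounded part of $V\setminus e_x$ contains the initial segment $e\setminus e_x$, whose closure contains $x$ where $g=0$, so "$g$ has a positive minimum by compactness" is not literally true there; but on that segment $\boldsymbol{a}\cdot(y-x)\le 0$, so $f=0$ regardless, and the compactness argument should be applied to $\overline{V\setminus e}$ only.
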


\begin{proof}
By assumption, we can choose a point $x$ of $e$ and distinct vectors $\boldsymbol{s}_1, \ldots, \boldsymbol{s}_m \in \boldsymbol{R}^n \setminus \{(0, \ldots, 0)\}$ satisfying the conditions $(1)$, $(2)$ and $(3)$.

Let $(l_1, \ldots, l_n) := \boldsymbol{l}$.
For any $1 \le i, j \le n$, let $f_{ij}$ be the element $(x)_i^{\odot l_j} \odot (x)_j ^{\odot (-l_i)} \odot X_i^{\odot (-l_j)} \odot X_j^{\odot l_i}$ of $\overline{\boldsymbol{T}(X_1, \ldots, X_n)}$.
Each $f_{\ij}$ is constant zero on $\{ x + t\boldsymbol{l} \,|\, t \in \boldsymbol{R} \}$ and has the slope zero in the directions $\pm \boldsymbol{l}$ at any point of $\boldsymbol{R}^n$.
The slope of $g := \bigoplus_{i, j = 1}^n \left(f_{ij} \oplus f_{ij}^{\odot (-1)}\right)$ in any direction away from the line $\{ x + t\boldsymbol{l} \,|\, t \in \boldsymbol{R} \}$ at any point of $\boldsymbol{R}^n$ is positive.
Since $f_{ij} \oplus f_{ij}^{\odot (-1)} \ge 0$, we have $g \ge 0$ and $g$ takes the value zero on and only on $\{ x + t \boldsymbol{l} \,|\, t \in \boldsymbol{R} \}$.

Since $\boldsymbol{l}$ is primitive, there exists $\boldsymbol{l}^{\prime} := ( l_1^{\prime}, \ldots, l_n^{\prime}) \in \boldsymbol{Z}^n \setminus \{ (0, \ldots, 0) \}$ such that $\boldsymbol{l} \cdot \boldsymbol{l}^{\prime} = l_1 l_1^{\prime} + \cdots + l_n l_n^{\prime} = 1$.
Let $h$ be the element $(x)_1^{\odot (- l_1^{\prime})} \odot \cdots \odot (x)_n^{\odot (- l_n^{\prime})} \odot X_1^{\odot l_1^{\prime}} \odot \cdots \odot X_n^{\odot l_n^{\prime}} \oplus 0$ of $\overline{\boldsymbol{T}(X_1, \ldots, X_n)}$.
Then $h$ takes the value zero on $\{ x + y \,|\, y \in \boldsymbol{R}^n, y \cdot \boldsymbol{l}^{\prime} \le 0\}$ and $h$ has slope one in the direction $\boldsymbol{l}$ at any point of $\{ x + y \,|\, y \in \boldsymbol{R}^n, y \cdot \boldsymbol{l}^{\prime} \ge 0 \}$ with lattice length.
Let $U_x(1)$ be the $1$-neighborhood of $x$, i.e., $U_x(1) := \{ y \in \boldsymbol{R}^n \,|\, \operatorname{dist}(x, y) = 1 \}$, where $\operatorname{dist}(x, y) = 1$ means the distanse between $x$ and $y$ is one with the usual metric.
Let $L := \{ x + y \,|\, y \in \boldsymbol{R}^n, y \cdot \boldsymbol{l}^{\prime} \ge 0 \} \cap U_x(1)$.
Then, by replacing $\boldsymbol{s}_1, \ldots, \boldsymbol{s}_m$ with other vectors satisfying the conditions $(1), (2)$ and $(3)$ if we need, we may assume that $M := \overline{L \setminus (\{ x \} + \operatorname{Cone}\{ \boldsymbol{s}_1, \ldots, \boldsymbol{s}_m \})}$ is nonempty and compact.
Since both $g$ and $h$ are positive on $M$, there exists $k \in \boldsymbol{Z}_{> 0}$ such that  $h(y) \le g(y)^{\odot k}$ for any $y \in M$.
For any $t \in \boldsymbol{R}_{\ge0}$, since $h(x + t(y - x)) = th(y)$ and $f_{ij}(x + t(y - x)) = t f_{ij}(y)$ for any $1 \le i, j \le n$, we have $h(x + t(y - x)) \le g(x + t(y - x))^{\odot k}$.
Hence $f := h \odot g^{\odot (-k)}  \oplus 0 \in \overline{\boldsymbol{T}(X_1, \ldots, X_n)}$ is a desired tropical rational function.
\end{proof}

\begin{lemma}
	\label{lem10}
Let $e$ be any segment of $V$ of a finite length, $x, y$ its distinct endpoints, $z$ its midpoint and $l$ the length of $e$ with lattice length.
Then there exists $f \in \overline{\boldsymbol{T}(X_1, \ldots, X_n)}$ such that $f(z) = 0$, the slope of $f$ is one on the segments $\overline{xz}$ and $\overline{zy}$ with lattice length and $f$ is constant $-\frac{l}{2}$ on $V \setminus e$.
\end{lemma}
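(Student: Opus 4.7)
The plan is to adapt the construction of Lemma \ref{lem9} to a segment, producing an explicit tropical rational function that has the prescribed tent shape along $e$ and is capped at the constant $-l/2$ elsewhere on $V$.

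First, let $\boldsymbol{l} = (l_1, \ldots, l_n) \in \boldsymbol{Z}^n$ be the primitive direction vector of $e$ pointing from $x$ toward $y$, so $y = x + l \boldsymbol{l}$ and $z = x + (l/2) \boldsymbol{l}$. Since $\boldsymbol{l}$ is primitive, pick $\boldsymbol{l}' = (l_1', \ldots, l_n') \in \boldsymbol{Z}^n$ with $\boldsymbol{l}' \cdot \boldsymbol{l} = 1$, and set
\[
h := (z)_1^{\odot(-l_1')} \odot \cdots \odot (z)_n^{\odot(-l_n')} \odot X_1^{\odot l_1'} \odot \cdots \odot X_n^{\odot l_n'},
\]
which in ordinary arithmetic represents the affine function $p \mapsto \boldsymbol{l}' \cdot (p - z)$. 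Then $F := ( h \oplus h^{\odot(-1)} )^{\odot(-1)} \in \overline{\boldsymbol{T}(X_1, \ldots, X_n)}$ represents $p \mapsto -|\boldsymbol{l}' \cdot (p - z)|$; along the line $L := \{ z + t \boldsymbol{l} \,|\, t \in \boldsymbol{R} \}$ containing $e$, this is exactly the tent with apex $0$ at $z$ and lattice-length slope $1$ on each side.

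Second, mirroring the proof of Lemma \ref{lem9}, set
\[
f_{ij} := (z)_i^{\odot l_j} \odot (z)_j^{\odot(-l_i)} \odot X_i^{\odot(-l_j)} \odot X_j^{\odot l_i} \qquad (1 \le i, j \le n),
\]
and let $G := \bigoplus_{i, j = 1}^n ( f_{ij} \oplus f_{ij}^{\odot(-1)} )$. Then $G \ge 0$, $G$ vanishes exactly on $L$, and $G$ has strictly positive slope at every point of $\boldsymbol{R}^n$ in every direction transverse to $L$. Now, for a positive integer $k$ to be chosen large enough, define $f := F \odot G^{\odot(-k)} \oplus (-l/2) \in \overline{\boldsymbol{T}(X_1, \ldots, X_n)}$. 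On $e$, $G \equiv 0$ and $|\boldsymbol{l}' \cdot (p - z)| \le l/2$, so $f = F = -|\boldsymbol{l}' \cdot (p - z)|$, yielding $f(z) = 0$, $f(x) = f(y) = -l/2$, and lattice-length slope $1$ on each of $\overline{xz}$ and $\overline{zy}$; on $L \setminus e$, $G$ still vanishes while $|\boldsymbol{l}' \cdot (p - z)| > l/2$, so $F < -l/2$ and $f = -l/2$.

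The main obstacle is to verify $f = -l/2$ on $V \setminus (L \cup e)$ with a single $k$, which amounts to proving the uniform inequality $|\boldsymbol{l}' \cdot (p - z)| + k G(p) \ge l/2$. Corollary \ref{cor4} makes $V$ a finite union of one-dimensional $\boldsymbol{R}$-rational polyhedral sets, so after refining the polyhedral decomposition so that $e$ is an edge, $V \setminus e$ meets $e$ only at $x$ and $y$ and has only finitely many branching directions at these endpoints. Along each branch with primitive direction $\boldsymbol{s} \ne \pm \boldsymbol{l}$, both $l/2 - |\boldsymbol{l}' \cdot (p - z)|$ and $G(p)$ are piecewise linear in the lattice parameter and vanish simultaneously at the attaching endpoint, so their ratio is locally bounded; Lemma \ref{lem:distance} then gives a uniform bound on each of the finitely many maximal polyhedral pieces making up $V \setminus e$, and choosing $k$ larger than the maximum of these bounds finishes the verification.
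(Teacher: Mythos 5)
Your construction is correct, but it takes a genuinely different route from the paper's. The paper treats $n=1$ by an explicit one-variable formula and, for $n\ge 2$, does not work at the midpoint at all: it runs the endpoint construction of Lemma \ref{lem9} twice, at $x$ for the direction $\boldsymbol{l}$ and at $y$ for $-\boldsymbol{l}$, producing ramps $f_x, f_y \ge 0$ of lattice slope one into $e$ that are constant zero on $V$ outside cones based at $x$ and $y$, arranged so that the intersection of the two cones with $V$ is exactly $e$; the desired function is then $-\frac{l}{2}\odot\left(f_x^{\odot(-1)}\oplus f_y^{\odot(-1)}\right)^{\odot(-1)}\oplus\left(-\frac{l}{2}\right)$, i.e.\ the minimum of the two ramps shifted down by $l/2$. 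Your version replaces the two cones by the single global penalty $G^{\odot(-k)}$ centered on the line $L$ and concentrates all the work in the one uniform inequality $l/2-|\boldsymbol{l}'\cdot(p-z)|\le kG(p)$ on $\overline{V\setminus e}$. That inequality does hold for some $k$, and your reduction to the finitely many maximal segments and rays of $V\setminus e$ is the right idea: on each piece the left-hand side is piecewise affine, $G$ grows linearly in the parameter once the piece leaves $L$, the only points where both sides vanish simultaneously are the attaching points $x$ and $y$ (where a linear-over-linear comparison gives a local bound), and unbounded pieces either exit the slab $\{|\boldsymbol{l}'\cdot(p-z)|\le l/2\}$ or force $G\to\infty$. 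The one imprecision is the citation of Lemma \ref{lem:distance}: that is a homogeneous estimate on a cone with apex at the origin and does not apply verbatim to your affine comparison centered at $z$ with critical points at $x$ and $y$; what you actually need is either the elementary one-dimensional bound just sketched or the compact-link-plus-homogeneity argument from the proof of Lemma \ref{lem9} run separately at $x$ and at $y$. In exchange, your formula is uniform in $n$ (for $n=1$ the penalty $G$ degenerates to the constant $0$ and your $f$ reduces to the paper's one-variable formula), whereas the paper's route has the advantage that the quantitative step is done once, inside Lemma \ref{lem9}, and is merely reused here.
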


\begin{proof}
Assume that $n = 1$.
The tropical rational function 
\begin{align*}
\left\{ ((-z) \odot X_1) \oplus ((-z) \odot X_1)^{\odot (-1)} \right\}^{\odot (-1)} \oplus \left( - \frac{l}{2} \right)
\end{align*}
is desired $f$.

Assume that $n \ge 2$.
Let $\boldsymbol{l}$ be the primitive direction vector of $e$ from $x$ to $y$.
For $x$ (resp. $y$), by the same argument in the proof of Lemma \ref{lem9}, we can choose distinct vectors $\boldsymbol{s}_{x1}, \ldots, \boldsymbol{s}_{xm_x} \in \boldsymbol{R}^n \setminus \{ (0, \ldots, 0)\}$ (resp. $\boldsymbol{s}_{y1}, \ldots, \boldsymbol{s}_{ym_y} \in \boldsymbol{R}^n \setminus \{ (0, \ldots, 0)\}$) and $f_x \in \overline{\boldsymbol{T}(X_1, \ldots, X_n)}$ (resp. $f_y \in \overline{\boldsymbol{T}(X_1, \ldots, X_n)}$) satisfying the conditions $(1)$, $(2)$ for $\boldsymbol{l}$ (resp. $-\boldsymbol{l}$) and $(4)$ in Lemma \ref{lem9} and $f_x$ (resp. $f_y$) is constant zero on $V \setminus ( \{ x \} + \operatorname{Cone}\{ \boldsymbol{s}_{x1}, \ldots, \boldsymbol{s}_{xm_x} \})$ (resp. $V \setminus ( \{ y \} + \operatorname{Cone}\{ \boldsymbol{s}_{y1}, \ldots, \boldsymbol{s}_{ym_y} \})$.
By replacing $\boldsymbol{s}_{x1}, \ldots, \boldsymbol{s}_{xm_x}, \boldsymbol{s}_{y1}, \ldots, \boldsymbol{s}_{ym_y}$ (and hence $f_x$ and $f_y$) if we need, we can assume that $V \cap (\{ x \} + \operatorname{Cone}\{ \boldsymbol{s}_{x1}, \ldots, \boldsymbol{s}_{xm_x} \}) \cap (\{ y \} + \operatorname{Cone}\{ \boldsymbol{s}_{y1}, \ldots, \boldsymbol{s}_{ym_y} \}) = e$ holds.
Hence the tropical rational function $- \frac{l}{2} \odot \left( f_x^{\odot (-1)} \oplus f_y^{\odot (-1)} \right)^{\odot (-1)} \oplus \left(-\frac{l}{2} \right)$ is a desired one.
\end{proof}

\begin{lemma}
	\label{lem11}
For any $x \in V$, there exist $\varepsilon > 0$, $\operatorname{val}(x)$ points $y_i$ of $V$ such that the distance between $x$ and $y_i$ is $\varepsilon$ with lattice length and $i$ moves all integers from one to the valency $\operatorname{val}(x)$ of $x$ in $V$, and $f \in \overline{\boldsymbol{T}(X_1, \ldots, X_n)}$ which has slope one in the direction from $y_i$ to $x$ for any $i$ with lattice length and is constant $-\varepsilon$ on $V \setminus \bigcup_{i = 1}^{\operatorname{val}(x)}\overline{xy_i}$.
\end{lemma}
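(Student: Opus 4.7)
My plan is to construct $f$ as the tropical inverse of a tropical max of slope-one ramps (one per spoke), capped below by $-\varepsilon$. By Corollary~\ref{cor4}, since $\boldsymbol{E}(V)$ is finitely generated, $V$ is a finite union of $\boldsymbol{R}$-rational polyhedral sets; in particular, locally at $x$, $V$ consists of $k := \operatorname{val}(x)$ $\boldsymbol{R}$-rational line segments with primitive integer direction vectors $\boldsymbol{l}_1, \ldots, \boldsymbol{l}_k$. First I choose $\varepsilon > 0$ small enough that, on each spoke $i$, the initial segment of lattice length $\varepsilon$ from $x$ lies entirely in the interior of the spoke (no vertex of $V$ in its interior), and set $y_i := x + \varepsilon \boldsymbol{l}_i$.

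For each $i$, I invoke the construction from the proof of Lemma~\ref{lem9}, as adapted in the $n \ge 2$ case of Lemma~\ref{lem10}, to produce $g_i \in \overline{\boldsymbol{T}(X_1, \ldots, X_n)}$ with $g_i \ge 0$, $g_i(x) = 0$, $g_i$ having slope one in lattice length on the ray from $x$ in direction $\boldsymbol{l}_i$, and $g_i \equiv 0$ on $V$ outside a cone $C_i$ with apex $x$ whose relative interior contains this ray. Because the $\boldsymbol{l}_i$ are pairwise distinct, I take the $C_i$ so narrow that they are pairwise disjoint on $V \setminus \{x\}$; in particular $g_i \equiv 0$ on $\overline{xy_j}$ for $j \ne i$.

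Then I set $f := (g_1 \oplus \cdots \oplus g_k)^{\odot(-1)} \oplus (-\varepsilon)$. Direct verification gives $f(x) = 0$, and on each $\overline{xy_i}$ at lattice distance $s \in [0, \varepsilon]$ from $x$ one has $(g_1 \oplus \cdots \oplus g_k)(p) = g_i(p) = s$, so $f(p) = \max(-s, -\varepsilon) = -s$, which is precisely the slope-one behavior from $y_i$ to $x$ required by the statement. Moreover, on the portion of spoke $i$ between $y_i$ and its far endpoint in $V$ (still inside $C_i$), $g_i \ge \varepsilon$, so $f = -\varepsilon$ there.

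The hard part will be to verify $f \equiv -\varepsilon$ on the \emph{remainder} of $V \setminus \bigcup_i \overline{xy_i}$, namely on the parts of $V$ that lie outside every cone $C_i$, reached from $x$ by traversing a spoke and then turning at a subsequent vertex of $V$. Equivalently, one must ensure $g_1 \oplus \cdots \oplus g_k \ge \varepsilon$ throughout this complement. To do so, one enlarges each cone $C_i$ to cover the straight continuation of spoke $i$ up to its far endpoint and, if $V$ branches further, one augments $\bigoplus_i g_i$ with additional ramp functions from Lemma~\ref{lem9} or tent functions from Lemma~\ref{lem10} attached to each subsequent straight segment of $V$, chosen to preserve the slope and vanishing conditions near $x$ while raising the tropical max above $\varepsilon$ on the remaining parts of $V$. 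The finiteness of the polyhedral decomposition of $V$ supplied by Corollary~\ref{cor4} guarantees that only finitely many such extensions are needed; coordinating their cones so as to simultaneously retain the local behavior on $\bigcup_i \overline{xy_i}$ and the global lower bound is the technical heart of the argument.
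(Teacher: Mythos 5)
Your construction is the one the paper itself uses: for each of the $\operatorname{val}(x)$ edge germs at $x$ you take a Lemma~\ref{lem9}-type ramp ($g_i \ge 0$, slope one in lattice length along the whole ray from $x$ in direction $\boldsymbol{l}_i$, vanishing on $V$ outside a cone $C_i$ with apex $x$), arrange the cones so that $y_i \notin C_j$ for $i \ne j$, and cap the tropical inverse of their combination at $-\varepsilon$. The paper combines the ramps as $\bigodot_i g_i^{\odot (-1)}$ where you use $\left( \bigoplus_i g_i \right)^{\odot (-1)}$, but since all $g_i \ge 0$ and at most one of them is nonzero at any point of $\bigcup_i \overline{xy_i}$, this difference is immaterial. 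Your verification on the star and on the straight continuations of the spokes is correct.

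The problem is that your last paragraph is a to-do list, not a proof. You yourself isolate the remaining claim --- that $\bigoplus_i g_i \ge \varepsilon$ at every point of $V$ outside all the cones $C_i$, failing which $f$ takes the value $0$ rather than $-\varepsilon$ there --- call it ``the technical heart of the argument,'' and then do not carry it out. The remedies you gesture at do not obviously work as stated: enlarging $C_i$ to contain the straight continuation of spoke $i$ is already forced by condition $(2)$ of Lemma~\ref{lem9} (the ramp has slope one along the entire ray, which is exactly why your middle paragraph succeeds), so it buys nothing new for the parts of $V$ reached after a bend or a branch point; and ``augmenting $\bigoplus_i g_i$ with additional ramp or tent functions'' is delicate, because a Lemma~\ref{lem10} tent attached to a distant edge $e^{\prime}$ exceeds any fixed $\varepsilon$ only on a proper subinterval of $e^{\prime}$ and vanishes at its endpoints, so the vertices of $V$ away from $x$ remain uncovered, and every auxiliary function you add must simultaneously stay below the lattice distance to $x$ on $\bigcup_i \overline{xy_i}$ so as not to destroy the slope-one condition there. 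Until this uniform positive lower bound on $V \setminus \bigcup_i \overline{xy_i}$ is actually established, the proof is incomplete. (For what it is worth, the paper's own proof is just as terse at this exact point --- it asserts only that ``for a sufficiently small positive number $\varepsilon$'' the function $\bigodot_i f_i^{\odot (-1)} \oplus (-\varepsilon)$ works --- so you have correctly located where the real work lies; but locating it is not doing it.)
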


\begin{proof}
When $n = 1$, if $x$ has valency two in $V$, then the assertion follows from Lemma \ref{lem10}; if $x$ has valency one in $V$, then one of the tropical rational functions $-x \odot X_1 \oplus (-\varepsilon)$ and $x \odot X_1^{\odot (-1)} \oplus (-\varepsilon)$ is desired.

Assume that $n \ge 2$.
Let $\boldsymbol{l}_i$ be the primitive direction vector from $x$ to $y_i$.
For $x$ and $\boldsymbol{l}_i$, by the same argument in the proof of Lemma \ref{lem9}, we can choose distinct vectors $\boldsymbol{s}_{i1}, \ldots, \boldsymbol{s}_{im_i} \in \boldsymbol{R}^n \setminus \{ (0, \ldots, 0) \}$ and $f_i \in \overline{\boldsymbol{T}(X_1, \ldots, X_n)}$ satisfying the conditions $(1)$, $(2)$ and $(4)$ in Lemma \ref{lem9} and $f_i$ is constant zero on $V \setminus (\{ x \} + \operatorname{Cone}\{ \boldsymbol{s}_{i1}, \ldots, \boldsymbol{s}_{im_i} \})$.
By replacing $\boldsymbol{s}_{11}, \ldots, \boldsymbol{s}_{1m_1}, \boldsymbol{s}_{21}, \ldots, \boldsymbol{s}_{\operatorname{val}(x)m_{\operatorname{val}(x)}}$ if we need, we can assume that if $i \not= j$, then $y_i$ is not in $\{ x \} + \operatorname{Cone}\{ \boldsymbol{s}_{j1}, \ldots, \boldsymbol{s}_{jm_j}\}$.
Thus for a sufficiently small positive number $\varepsilon$, the tropical rational function $\bigodot_{i = 1}^{\operatorname{val}(x)} f_i^{\odot (-1)} \oplus (- \varepsilon)$ is desired.
\end{proof}

Let $V$ be a closed connected subset of $\boldsymbol{R}^n$ of geometric dimension one whose associated congruence $\boldsymbol{E}(V)$ is finitely generated as a congruence, or consist of only one point of $\boldsymbol{R}^n$.
Let $V^{\prime}$ be the natural compactification of $V$ as a tropical curve, i.e., let $V^{\prime}$ be obtained from $V$ by one-point compactifications of rays of $V$.

\begin{prop}
	\label{prop2}
In the above setting, $\operatorname{Rat}(V^{\prime})$ is isomorphic to $\overline{\boldsymbol{T}(X_1, \ldots, X_n)} / \boldsymbol{E}(V)$ as a $\boldsymbol{T}$-algebra if and only if there exist no two distinct rays of $V$ whose primitive direction vectors coincide.
\end{prop}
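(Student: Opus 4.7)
The approach is to work with the natural $\boldsymbol{T}$-algebra homomorphism $\Psi : \overline{\boldsymbol{T}(X_1, \ldots, X_n)} \to \operatorname{Rat}(V^{\prime})$ that restricts a tropical rational function to $V$ and extends continuously to $V^{\prime}$. Since $V$ is dense in $V^{\prime}$, $\Psi$ factors through $\overline{\boldsymbol{T}(X_1,\ldots,X_n)}/\boldsymbol{E}(V)$ as an injective $\boldsymbol{T}$-algebra homomorphism $\overline{\Psi}$, and the proposition reduces to showing that $\overline{\Psi}$ is surjective precisely when no two rays of $V$ share a primitive direction vector. Any purely abstract isomorphism that could bypass $\overline{\Psi}$ will be ruled out by the rigidity of tropical rational function semifields used elsewhere in the paper (cf.~\cite{JuAe5}).

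For the ``only if'' direction, suppose two distinct rays $e_1, e_2 \subset V$ share the primitive direction vector $\boldsymbol{l}$. The key calculation is that for any $f = f_1 \odot f_2^{\odot(-1)}$ with $f_1, f_2$ tropical Laurent polynomials whose exponent vectors run over finite sets $A$ and $B$ respectively, the function $t \mapsto f(x_0 + t\boldsymbol{l})$ is linear for large $t$ with slope $\max_{\alpha \in A} \alpha \cdot \boldsymbol{l} - \max_{\beta \in B} \beta \cdot \boldsymbol{l}$, which is independent of the basepoint $x_0$. Hence the outgoing slopes of $\overline{\Psi}([f])$ at the two points at infinity of $V^{\prime}$ corresponding to $e_1, e_2$ are forced to coincide. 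On the other hand, $\operatorname{Rat}(V^{\prime})$ plainly admits piecewise linear functions realizing arbitrary integer outgoing slopes at each point at infinity independently, so $\overline{\Psi}$ cannot be surjective.

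For the ``if'' direction, assume no two rays share a primitive direction vector, and let $g \in \operatorname{Rat}(V^{\prime})$. The plan is to realize $g$ as $\overline{\Psi}([F])$ for an explicit tropical rational function $F$ assembled from the local atoms produced by Lemmas \ref{lem9}, \ref{lem10} and \ref{lem11}: a function with slope one on a chosen ray and zero elsewhere on $V$, a tent on a finite segment, and a radial tent at a finite point of $V$. Since $g$ is piecewise linear with only finitely many pieces, I would subdivide $V^{\prime}$ compatibly with the pieces of $g$ and proceed by induction on the number of pieces, combining the atoms via $\oplus$, $\odot$ and tropical inverses to correct slopes edge-by-edge; the ``constant elsewhere'' feature of each atom ensures that already-matched portions of $V^{\prime}$ are not disturbed.

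The main obstacle lies in this assembly step: verifying that finite combinations of the Lemmas \ref{lem9}--\ref{lem11} atoms reproduce an arbitrary piecewise linear integer-slope function on $V^{\prime}$. The no-shared-direction hypothesis enters exactly through Lemma \ref{lem9}, which uses it to isolate a single ray in $V$; once such isolation is available for rays, and the analogous isolation from Lemma \ref{lem11} at finite points, the inductive reconstruction of $g$ reduces to finite bookkeeping on the graph structure of $V^{\prime}$, completing the proof.
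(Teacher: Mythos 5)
Your proposal follows essentially the same route as the paper: the basepoint-independence of the asymptotic slope $\max_{\alpha\in A}\alpha\cdot\boldsymbol{l}-\max_{\beta\in B}\beta\cdot\boldsymbol{l}$ along parallel rays is exactly the computation the paper carries out (more laboriously, by comparing the dominant monomials on $e_1$ and $e_2$), and the converse direction is likewise built from the atoms of Lemmas \ref{lem9}--\ref{lem11}. Two steps, however, are left as promissory notes where the paper supplies a concrete mechanism. First, ruling out an abstract isomorphism that does not factor as your $\overline{\Psi}$: the paper invokes \cite[Proposition 3.12(1) and (5)]{JuAe4} to show that any isomorphism $\psi$ yields an isometry $\theta:V'\setminus V'_\infty\to V$, $x\mapsto(g_1(x),\ldots,g_n(x))$ with $\psi(\pi_{\boldsymbol{E}(V)}(g))=g\circ\theta$, so every isomorphism \emph{is} a restriction map up to an isometry of $V$ onto itself and the slope obstruction applies verbatim; a vague appeal to ``rigidity (cf.\ \cite{JuAe5})'' does not by itself exclude exotic isomorphisms, so you should pin this down. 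Second, the assembly step you flag as the main obstacle is genuinely the crux of the ``if'' direction; the paper does not reprove it but delegates it to the proofs of \cite[Proposition 3.6 and Lemma 1.4]{JuAe2}, which show that tent functions at finite points and segments generate $\operatorname{Rat}$ of a compact tropical curve and that adjoining the ray functions $f_{e_i}$ recovers $\operatorname{Rat}(V')$. With those two references your argument closes; without them the surjectivity claim remains unproved.
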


\begin{proof}
If $V$ consists of only one point of $\boldsymbol{R}^n$, then $\overline{\boldsymbol{T}(X_1, \ldots, X_n)} / \boldsymbol{E}(V)$ is isomorphic to $\boldsymbol{T} = \operatorname{Rat}(V^{\prime})$ and there exist no two distinct rays of $V$ whose primitive direction vectors coincide.
If $n = 1$, then the tropical rational function on $V$ represented by $X_1$ generates the semifield over $\boldsymbol{T}$ isomorphic to $\operatorname{Rat}(V^{\prime})$ as a $\boldsymbol{T}$-algebra by the proof of \cite[Proposition 3.13]{JuAe2}.
Hence we assume that $V$ does not consist of only one point and $n \ge 2$.

We shall show the if part.
Assume that there exist two distince rays $e_1, e_2$ of $V$ which have the same primitive direction vector $(s_1, \ldots, s_n) \in \boldsymbol{Z}^n \setminus \{ (0, \ldots, 0) \}$.
Let $f$ be an element of $\overline{\boldsymbol{T}(X_1, \ldots, X_n)} \setminus \{ -\infty \}$.
We can choose $f_1, f_2 \in \overline{\boldsymbol{T}[X_1, \ldots, X_n]} \setminus \{ -\infty \}$ such that $f = f_1 \odot f_2^{\odot (-1)}$.
Since both $f_1$ and $f_2$ are tropical polynomial functions other than $-\infty$, we can assume that 
\begin{align*}
&f_1(((x_1)_1 + ts_1, \ldots, (x_1)_1 + ts_n))\\
=~&a \odot ((x_1)_1 + ts_1)^{\odot i_1} \odot \cdots \odot ((x_1)_n + ts_n)^{\odot i_n}
\end{align*}
and
\begin{align*}
&f_2(((x_1)_1 + ts_1, \ldots, (x_1)_1 + ts_n))\\
=~&b \odot ((x_1)_1 + ts_1)^{\odot j_1} \odot \cdots \odot ((x_1)_n + ts_n)^{\odot j_n}
\end{align*}
with some $x_1 \in e_1$, $a, b \in \boldsymbol{R}$ and $i_1, \ldots, i_n, j_1, \ldots, j_n \in \boldsymbol{Z}_{\ge 0}$ for any $t \ge 0$.
By the same reason, there exists $x_2 \in e_2$ such that for any $t \ge 0$,
\begin{align*}
&f_1(((x_2)_1 + ts_1, \ldots, (x_2)_1 + ts_n))\\
=~&a^{\prime} \odot ((x_2)_1 + ts_1)^{\odot i^{\prime}_1} \odot \cdots \odot ((x_2)_n + ts_n)^{\odot i^{\prime}_n}
\end{align*}
and
\begin{align*}
&f_2(((x_2)_1 + ts_1, \ldots, (x_2)_n + ts_n))\\
=~&b^{\prime} \odot ((x_2)_1 + ts_1)^{\odot j^{\prime}_1} \odot \cdots \odot ((x_2)_n + ts_n)^{\odot j^{\prime}_n}
\end{align*}
with some $a^{\prime}, b^{\prime} \in \boldsymbol{R}$ and $i^{\prime}_1, \ldots, i^{\prime}_n, j^{\prime}_1, \ldots, j^{\prime}_n \in \boldsymbol{Z}_{\ge 0}$.
Then $i_1 s_1 + \cdots + i_n s_n$ (resp. $j_1 s_1 + \cdots + j_n s_n$) must coincide with $i^{\prime}_1 s_1 + \cdots + i^{\prime}_n s_n$ (resp. $j^{\prime}_1 s_1 + \cdots + j^{\prime}_n s_n$).
In fact, if $i_1 s_1 + \cdots + i_n s_n > i^{\prime}_1 s_1 + \cdots + i^{\prime}_n s_n$, then for a sufficiently large $t$, we have
\begin{align*}
&f_1(((x_2)_1 + ts_1, \ldots, (x_2)_n + ts_n))\\
\ge~&a \odot ((x_2)_1 + ts_1)^{\odot i_1} \odot \cdots \odot ((x_2)_1 + ts_n)^{\odot i_n}\\
=~&a + i_1(x_2)_1 + \cdots + i_n(x_2)_n + t (i_1 s_1 + \cdots + i_n s_n)\\
>~&a^{\prime} + i^{\prime}_1(x_2)_1 + \cdots + i^{\prime}_n(x_2)_n + t(i^{\prime}_1 s_1 + \cdots + i^{\prime}_n s_n)\\
=~&f_1(((x_2)_1 + ts_1, \ldots, (x_2)_n + ts_n)),
\end{align*}
which is a contradiction.
Thus $i_1 s_1 + \cdots + i_n s_n \le i^{\prime}_1 s_1 + \cdots + i^{\prime}_n s_n$.
By the same argument, we have $i_1 s_1 + \cdots + i_n s_n \ge i^{\prime}_1 s_1 + \cdots + i^{\prime}_n s_n$, and hence $i_1 s_1 + \cdots + i_n s_n = i^{\prime}_1 s_1 + \cdots + i^{\prime}_n s_n$.
Similarly, we have $j_1 s_1 + \cdots + j_n s_n = j^{\prime}_1 s_1 + \cdots + j^{\prime}_n s_n$.
Since
\begin{align*}
&&&f(((x_1)_1 + ts_1, \ldots, (x_1)_n + ts_n))\\
&&=~& f_1(((x_1)_1 + ts_1, \ldots, (x_1)_n + ts_n)) - f_2(((x_1)_1 + ts_1, \ldots, (x_1)_n + ts_n))\\
&&=~& (a - b) + (i_1 - j_1)(x_1)_1 + \cdots + (i_n - j_n)(x_1)_n\\
&&&+ t \{ (i_1 s_1 + \cdots + i_n s_n) - (j_1 s_1 + \cdots + j_n s_n) \}\\
&&=~& (a^{\prime} - b^{\prime}) + (i^{\prime}_1 - j^{\prime}_1)(x_2)_1 + \cdots + (i^{\prime}_n - j^{\prime}_n)(x_2)_n\\
&&&+ t \{ (i_1 s_1 + \cdots + i_n s_n) - (j_1 s_1 + \cdots + j_n s_n) \}\\
&&~& + \{(a - b) - (a^{\prime} - b^{\prime}) \} + \{ (i_1 - j_1)(x_1)_1 - (i^{\prime}_1 - j^{\prime}_1)(x_2)_1 \} + \cdots \\
&&~&+ \{ (i_n - j_n)(x_1)_n - (i^{\prime}_n - j^{\prime}_n)(x_2)_n \}\\
&&=~& f_1(((x_2)_1 + ts_1, \ldots, (x_2)_n + ts_n)) - f_2(((x_2)_1 + ts_1, \ldots, (x_2)_n + ts_n))\\
&&~& + \{(a - b) - (a^{\prime} - b^{\prime}) \} + \{ (i_1 - j_1)(x_1)_1 - (i^{\prime}_1 - j^{\prime}_1)(x_2)_1 \} + \cdots \\
&&~&+ \{ (i_n - j_n)(x_1)_n - (i^{\prime}_n - j^{\prime}_n)(x_2)_n \}\\
&&=~& f(((x_2)_1 + ts_1, \ldots, (x_2)_n + ts_n))\\
&&& + \{(a - b) - (a^{\prime} - b^{\prime}) \} + \{ (i_1 - j_1)(x_1)_1 - (i^{\prime}_1 - j^{\prime}_1)(x_2)_1 \} + \cdots \\
&&&+ \{ (i_n - j_n)(x_1)_n - (i^{\prime}_n - j^{\prime}_n)(x_2)_n \},
\end{align*}
$f(((x_1)_1 + ts_1, \ldots, (x_1)_n + ts_n)) \to \pm \infty$ as $t \to \infty$ if and only if $f(((x_2)_1 + ts_1, \ldots, (x_2)_n + ts_n)) \to \pm \infty$ as $t \to \infty$.
If $\psi$ is a $\boldsymbol{T}$-algebra isomorphism from $\overline{\boldsymbol{T}(X_1, \ldots, X_n)} / \boldsymbol{E}(V)$ to $\operatorname{Rat}(V^{\prime})$, then $g_1 := \psi(\pi_{\boldsymbol{E}(V)}(X_1)), \ldots, g_n := \psi(\pi_{\boldsymbol{E}(V)}(X_n)) \in \operatorname{Rat}(V^{\prime}) \setminus \{ -\infty \}$ generate $\operatorname{Rat}(V^{\prime})$ as a semifield over $\boldsymbol{T}$, where $\pi_{\boldsymbol{E}(V)} : \overline{\boldsymbol{T}(X_1, \ldots, X_n)} \twoheadrightarrow \overline{\boldsymbol{T}(X_1, \ldots, X_n)} / \boldsymbol{E}(V)$ is the natural surjective $\boldsymbol{T}$-algebra homomorphism.
By \cite[Proposition 3.12(1) and (5)]{JuAe4}, we have the isometry $\theta : V^{\prime} \setminus V^{\prime}_{\infty} \to V; x \mapsto (g_1(x), \ldots, g_n(x))$ and $\psi(\pi_{\boldsymbol{E}(V)}(g))(x) = g(\theta(x))$ for any $g \in \overline{\boldsymbol{T}(X_1, \ldots, X_n)}$ and $x \in V^{\prime} \setminus V^{\prime}_{\infty}$.
On the other hand, $V^{\prime}$ has a rational function that has the slope one on the edge $e_1^{\prime}$ corresponding to $e_1$ and is constant on the whole of $V^{\prime} \setminus e_1^{\prime}$, which is a contradiction.
This means that $\operatorname{Rat}(V^{\prime})$ is not isomorphic to $\overline{\boldsymbol{T}(X_1, \ldots, X_n)} / \boldsymbol{E}(V)$ as a $\boldsymbol{T}$-algebra.

We shall show the only if part.
By Lemma \ref{lem9}, we can choose rays $e_1, \ldots, e_m$ in $V$ and $f_{e_1}, \ldots, f_{e_m} \in \overline{\boldsymbol{T}(X_1, \ldots, X_n)}$ such that each $f_{e_i}$ has slope one on $e_i$ with lattice length and is constant zero on $V \setminus e_i$ and $\overline{V \setminus \bigcup_{i = 1}^m e_i}$ is compact.
Since $\boldsymbol{E}(\overline{V \setminus \bigcup_{i = 1}^m e_i})$ is finitely generated as a congruence by Theorem \ref{thm:main} and $\overline{V \setminus \bigcup_{i = 1}^m e_i}$ is a closed connected subset of $\boldsymbol{R}^n$ of geometric dimension zero or one, by Lemmas \ref{lem10} and \ref{lem11} and the proof of \cite[Proposition 3.6]{JuAe2}, the semifield over $\boldsymbol{T}$ generated by all $f \in \overline{\boldsymbol{T}(X_1, \ldots, X_n)}$ in Lemmas \ref{lem10} and \ref{lem11} is isomorphic to $\operatorname{Rat}(\overline{V \setminus \bigcup_{i = 1}^m e_i})$ as a $\boldsymbol{T}$-algebra.
Note that $\operatorname{Rat}(\overline{V \setminus \bigcup_{i = 1}^m e_i})$ is the rational function semifield of $\overline{V \setminus \bigcup_{i = 1}^m e_i}$ when we regard $\overline{V \setminus \bigcup_{i = 1}^m e_i}$ as a tropical curve with lattice length.
By the proof of \cite[Lemma 1.4]{JuAe2}, we know that all such $f$s and $f_{e_1}, \ldots, f_{e_m}$ generate a semifield over $\boldsymbol{T}$ isomorphic to $\operatorname{Rat}(V^{\prime})$ as a $\boldsymbol{T}$-algebra.
Since there exists a natural $\boldsymbol{T}$-algebra homomorphism from $\overline{\boldsymbol{T}(X_1, \ldots, X_n)}$ to $\operatorname{Rat}(V^{\prime})$ by regarding each element of $\overline{\boldsymbol{T}(X_1, \ldots, X_n)}$ as an element of $\operatorname{Rat}(V^{\prime})$, we have the conclusion.
\end{proof}

In the above setting, by \cite[Proposition 3.12(5)]{JuAe4}, if there exists a tropical curve $\Gamma$ such that $\operatorname{Rat}(\Gamma)$ is isomorphic to $\overline{\boldsymbol{T}(X_1, \ldots, X_n)} / \boldsymbol{E}(V)$ as a $\boldsymbol{T}$-algebra, then $\Gamma$ is isomorphic to $V^{\prime}$, and hence $\operatorname{Rat}(V^{\prime})$ is also isomorphic to $\overline{\boldsymbol{T}(X_1, \ldots, X_n)} / \boldsymbol{E}(V)$.
Thus by \cite[Theorem 1.1]{JuAe2}, Corollary \ref{cor8}, \cite[Proposition 3.12]{JuAe4} and Proposition \ref{prop2}, we have the following corollary, which characterizes the rational function semifields of tropical curves:

\begin{cor}
	\label{cor9}
Let $S$ be a semifield which is finitely generated as a semifield over $\boldsymbol{T}$.
Then $S$ is isomorphic to $\operatorname{Rat}(\Gamma)$ as a $\boldsymbol{T}$-algebra with some tropical curve $\Gamma$ if and only if some (and hence, any) surjective $\boldsymbol{T}$-algebra homomorphism $\psi$ from a tropical rational function semifield to $S$ satisfies the following five conditions:

$(1)$ $\operatorname{Ker}(\psi)$ is finitely generated as a congruence,

$(2)$ $\operatorname{Ker}(\psi) = \boldsymbol{E}(\boldsymbol{V}(\operatorname{Ker}(\psi)))$ holds,

$(3)$ $\boldsymbol{V}(\operatorname{Ker}(\psi))$ is connected,

$(4)$ $\boldsymbol{V}(\operatorname{Ker}(\psi))$ is of geometric dimension zore or one, and

$(5)$ $\boldsymbol{V}(\operatorname{Ker}(\psi))$ has no two distinct rays whose primitive direction vectors coincide.
\end{cor}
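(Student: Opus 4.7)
The plan is to translate the five conditions into the hypotheses of Proposition \ref{prop2}, using Proposition 3.12 of \cite{JuAe4} and Corollary \ref{cor8} as the bridges, and then to quote the two directions of Proposition \ref{prop2}.

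For the only if direction, I would assume $S \cong \operatorname{Rat}(\Gamma)$ and fix any surjective $\boldsymbol{T}$-algebra homomorphism $\psi$ from a tropical rational function semifield $\overline{\boldsymbol{T}(X_1, \ldots, X_n)}$ onto $S$. Composing $\psi$ with the isomorphism $S \cong \operatorname{Rat}(\Gamma)$ allows me to regard $\psi$ as a surjective $\boldsymbol{T}$-algebra homomorphism onto $\operatorname{Rat}(\Gamma)$, which by Theorem 1.1 of \cite{JuAe2} is indeed finitely generated over $\boldsymbol{T}$; Corollary \ref{cor8} then yields (1). Proposition 3.12 of \cite{JuAe4} supplies both the equality in (2) and an isometry $\Gamma \setminus \Gamma_\infty \to V := \boldsymbol{V}(\operatorname{Ker}(\psi))$, from which (3) and (4) follow at once because tropical curves are connected and of geometric dimension zero or one. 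For (5), I would invoke Proposition 3.12(5) of \cite{JuAe4} to identify $\Gamma$ with the natural compactification $V^{\prime}$, so that (2) gives $\operatorname{Rat}(V^{\prime}) \cong S \cong \overline{\boldsymbol{T}(X_1, \ldots, X_n)}/\boldsymbol{E}(V)$ as $\boldsymbol{T}$-algebras, and the only-if part of Proposition \ref{prop2} then rules out distinct rays of $V$ with coinciding primitive direction vectors.

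For the if direction, I would assume some surjective $\psi$ satisfies (1)--(5) and set $V := \boldsymbol{V}(\operatorname{Ker}(\psi))$. Condition (2) together with the fundamental homomorphism theorem would yield $S \cong \overline{\boldsymbol{T}(X_1, \ldots, X_n)}/\boldsymbol{E}(V)$, while (1), (3), and (4) would place $V$ precisely in the setting preceding Proposition \ref{prop2} (noting that a congruence variety is automatically closed and that a connected zero-dimensional one is a single point), and (5) supplies its remaining hypothesis on primitive direction vectors. The if-part of Proposition \ref{prop2} would then give $\operatorname{Rat}(V^{\prime}) \cong \overline{\boldsymbol{T}(X_1, \ldots, X_n)}/\boldsymbol{E}(V) \cong S$, so the choice $\Gamma := V^{\prime}$ completes the construction. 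The ``some (and hence, any)'' quantifier is automatic: once the curve $\Gamma$ has been produced, the only-if direction applied to an arbitrary surjective $\boldsymbol{T}$-algebra homomorphism from a tropical rational function semifield to $S$ shows that (1)--(5) hold for that homomorphism too. I do not expect any step here to be a genuine obstacle, since Corollary \ref{cor8} and Proposition \ref{prop2} already carry the technical weight; the only point requiring care is using (2) in the if direction to recast $S$ as a quotient of the precise form needed to invoke Proposition \ref{prop2}.
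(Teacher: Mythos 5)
Your proposal is correct and follows essentially the same route as the paper: the paper's own argument is precisely the combination of \cite[Theorem 1.1]{JuAe2}, Corollary \ref{cor8}, \cite[Proposition 3.12]{JuAe4} (for conditions $(2)$--$(4)$ and the identification $\Gamma \cong V^{\prime}$), and the two directions of Proposition \ref{prop2} (for condition $(5)$ and the converse), which is exactly the assembly you describe. Your handling of the ``some (and hence, any)'' quantifier by re-running the only-if direction is also the intended reading.
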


By Corollary \ref{cor9}, \cite[Corollary 1.2]{JuAe3} and \cite[Corollary 3.18]{JuAe4}, we have the following corollary.
This is a tropical analogue of the fact that the category of nonsingular projective curves and dominant morphisms is equivalent to the category of function fields of dimension one over $k$ and $k$-homomorphisms, where $k$ is an algebraically closed field (see \cite[2, Corollary 6.12. in Chapter I]{Hartshorne}).

\begin{cor}
	\label{cor10}
The following categories $\mathscr{C}, \mathscr{D}$ are equivalent via the following (contravariant) functors $F, G$.

$(1)$ The class $\operatorname{Ob}(\mathscr{C})$ of objects of $\mathscr{C}$ is the tropical curves.

For $\Gamma_1, \Gamma_2 \in \operatorname{Ob}(\mathscr{C})$, the set $\operatorname{Hom}_{\mathscr{C}}(\Gamma_1, \Gamma_2)$ of morphisms from $\Gamma_1$ to $\Gamma_2$ consists of the surjective morphisms $\Gamma_1 \twoheadrightarrow \Gamma_2$.

$(2)$ The class $\operatorname{Ob}(\mathscr{D})$ of objects of $\mathscr{D}$ is the finitely generated semifields over $\boldsymbol{T}$ satisfying the conditions $(1), \ldots, (5)$ in Corollary \ref{cor9}.

For $S_1, S_2 \in \operatorname{Ob}(\mathscr{D})$, the set $\operatorname{Hom}_{\mathscr{D}}(S_1, S_2)$ of morphisms from $S_1$ to $S_2$ consists of the injective $\boldsymbol{T}$-algebra homomorphisms $S_1 \hookrightarrow S_2$.

The functor $F : \mathscr{C} \to \mathscr{D}$ maps $\Gamma \in \operatorname{Ob}(\mathscr{C})$ to $\operatorname{Rat}(\Gamma)$ and for $\Gamma_1, \Gamma_2 \in \operatorname{Ob}(\mathscr{C})$, maps $\varphi \in \operatorname{Hom}_{\mathscr{C}}(\Gamma_1, \Gamma_2)$ to $\varphi^{\ast} \in \operatorname{Hom}_{\mathscr{D}}(\operatorname{Rat}(\Gamma_2), \operatorname{Rat}(\Gamma_1))$.

The functor $G : \mathscr{D} \to \mathscr{C}$ maps $S \in \operatorname{Ob}(\mathscr{D})$ to $\boldsymbol{V}(\operatorname{Ker}(\psi_S))^{\prime} \in \operatorname{Ob}(\mathcal{C})$ and for $S_1, S_2 \in \operatorname{Ob}(\mathscr{D})$, maps $\widetilde{\psi} \in \operatorname{Hom}_{\mathscr{D}}(S_1, S_2)$ to $\varphi_{\widetilde{\psi}} \in \operatorname{Hom}_{\mathscr{C}}(\boldsymbol{V}(\operatorname{Ker}(\psi_{S_2}))^{\prime}, \boldsymbol{V}(\operatorname{Ker}(\psi_{S_1}))^{\prime})$, where $\psi_S$ is a fixed surjective $\boldsymbol{T}$-algebra homomorphism from a tropical rational function semifield to $S$ and $\boldsymbol{V}(\operatorname{Ker}(\psi_S))^{\prime}$ is the natural compactification of $\boldsymbol{V}(\operatorname{Ker}(\psi_S))$ as a tropical curve for any $S \in \operatorname{Ob}(\mathscr{D})$ and $\varphi_{\widetilde{\psi}}$ is the unique surjective morphism $\boldsymbol{V}(\operatorname{Ker}(\psi_{S_2}))^{\prime} \twoheadrightarrow \boldsymbol{V}(\operatorname{Ker}(\psi_{S_1}))^{\prime}$ such that $\widetilde{\psi} = \left(\varphi_{\widetilde{\psi}}\right)^{\ast}$ given in \cite[Theorem 1.1]{JuAe3} and \cite[Corollary 3.18]{JuAe4}.
\end{cor}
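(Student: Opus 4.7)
The plan is to assemble the equivalence from pieces that are essentially already in place: Corollary \ref{cor9} guarantees that $F$ lands in $\mathscr{D}$ on objects, while \cite[Corollary 1.2]{JuAe3} and \cite[Corollary 3.18]{JuAe4} supply the bijection on morphisms. I would therefore split the verification into four steps, checking well-definedness of $F$, well-definedness of $G$, functoriality/faithfulness, and naturality of the unit and counit.

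First I would verify that $F$ is well-defined. For $\Gamma\in\operatorname{Ob}(\mathscr{C})$, apply Corollary \ref{cor9} together with \cite[Theorem 1.1]{JuAe2} to conclude that $\operatorname{Rat}(\Gamma)$ is a finitely generated semifield over $\boldsymbol{T}$ satisfying conditions (1)--(5), so it lies in $\operatorname{Ob}(\mathscr{D})$. On morphisms, given a surjective morphism $\varphi\colon\Gamma_1\twoheadrightarrow\Gamma_2$, the pull-back $\varphi^{\ast}\colon\operatorname{Rat}(\Gamma_2)\to\operatorname{Rat}(\Gamma_1)$ is a $\boldsymbol{T}$-algebra homomorphism by Subsection \ref{subsection2.7}, and its injectivity follows from surjectivity of $\varphi$ (if $f^{\prime}\circ\varphi=g^{\prime}\circ\varphi$ then $f^{\prime}=g^{\prime}$ on $\varphi(\Gamma_1)=\Gamma_2$). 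Functoriality $(\varphi_2\circ\varphi_1)^{\ast}=\varphi_1^{\ast}\circ\varphi_2^{\ast}$ and $\operatorname{id}^{\ast}=\operatorname{id}$ are immediate.

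Next I would verify that $G$ is well-defined. For $S\in\operatorname{Ob}(\mathscr{D})$, Corollary \ref{cor9} ensures $\boldsymbol{V}(\operatorname{Ker}(\psi_S))$ is a closed connected subset of $\boldsymbol{R}^n$ of geometric dimension at most one with no two rays of coinciding primitive direction, so its natural compactification $\boldsymbol{V}(\operatorname{Ker}(\psi_S))^{\prime}$ is a genuine tropical curve; moreover it is independent of the choice of $\psi_S$ up to canonical isomorphism by \cite[Proposition 3.12]{JuAe4} and Proposition \ref{prop2}. For an injective $\boldsymbol{T}$-algebra homomorphism $\widetilde{\psi}\colon S_1\hookrightarrow S_2$, the existence and uniqueness of a surjective morphism $\varphi_{\widetilde{\psi}}\colon\boldsymbol{V}(\operatorname{Ker}(\psi_{S_2}))^{\prime}\twoheadrightarrow\boldsymbol{V}(\operatorname{Ker}(\psi_{S_1}))^{\prime}$ with $\widetilde{\psi}=(\varphi_{\widetilde{\psi}})^{\ast}$ is exactly the content of \cite[Corollary 1.2]{JuAe3} combined with \cite[Corollary 3.18]{JuAe4}. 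Uniqueness of $\varphi_{\widetilde{\psi}}$ then gives functoriality of $G$ automatically: both $\varphi_{\widetilde{\psi}_2\circ\widetilde{\psi}_1}$ and $\varphi_{\widetilde{\psi}_1}\circ\varphi_{\widetilde{\psi}_2}$ pull back to $\widetilde{\psi}_2\circ\widetilde{\psi}_1$, hence coincide.

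Finally I would establish the natural isomorphisms $F\circ G\cong\operatorname{id}_{\mathscr{D}}$ and $G\circ F\cong\operatorname{id}_{\mathscr{C}}$. For an object $\Gamma\in\operatorname{Ob}(\mathscr{C})$, fixing a surjective $\boldsymbol{T}$-algebra homomorphism $\psi_{\operatorname{Rat}(\Gamma)}\colon\overline{\boldsymbol{T}(X_1,\ldots,X_n)}\twoheadrightarrow\operatorname{Rat}(\Gamma)$ coming from a generating set, Proposition \ref{prop2} together with \cite[Proposition 3.12(5)]{JuAe4} provides a canonical isomorphism $\boldsymbol{V}(\operatorname{Ker}(\psi_{\operatorname{Rat}(\Gamma)}))^{\prime}\cong\Gamma$ of tropical curves, yielding $G(F(\Gamma))\cong\Gamma$. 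For $S\in\operatorname{Ob}(\mathscr{D})$, Proposition \ref{prop2} identifies $\operatorname{Rat}(\boldsymbol{V}(\operatorname{Ker}(\psi_S))^{\prime})$ with $\overline{\boldsymbol{T}(X_1,\ldots,X_n)}/\operatorname{Ker}(\psi_S)\cong S$ as $\boldsymbol{T}$-algebras, giving $F(G(S))\cong S$. Naturality in both directions follows from the uniqueness clause in \cite[Corollary 1.2]{JuAe3} and \cite[Corollary 3.18]{JuAe4}: any square chasing reduces to two morphisms having the same pull-back, hence being equal.

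The main obstacle is really bookkeeping rather than new mathematics: one must keep track of the dependence of $G$ on the chosen $\psi_S$ and check that different choices produce canonically isomorphic tropical curves, so that $G$ descends to a well-defined functor and the natural isomorphisms above are genuinely natural. Everything else is an immediate application of Corollary \ref{cor9}, Proposition \ref{prop2}, and the cited results from \cite{JuAe3} and \cite{JuAe4}.
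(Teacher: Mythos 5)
Your proposal is correct and follows essentially the same route as the paper, which offers no written proof beyond the single sentence citing Corollary \ref{cor9}, \cite[Corollary 1.2]{JuAe3} and \cite[Corollary 3.18]{JuAe4}; your four-step verification (well-definedness of $F$ and $G$, functoriality via the uniqueness clause, and the two natural isomorphisms from Proposition \ref{prop2} and \cite[Proposition 3.12(5)]{JuAe4}) is a faithful and accurate expansion of exactly that citation chain.
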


Let $\Gamma$ be a tropical curve and $D$ an effective divisor on $\Gamma$.
Let $F := \{ f_1, \ldots, f_n \}$ be a finite generating set of $R(D)$ as a $\boldsymbol{T}$-module such that $f_i \not= -\infty$ for all $i$ and $\phi_F: \Gamma \setminus \Gamma_{\infty} \to \boldsymbol{R}^n; x \mapsto (f_1(x), \cdots, f_n(x))$ the rational map induced by $F$.
Note that $F$ contains all extremals of $R(D)$ up to tropical scalar multiplication (other than $-\infty$).

\begin{cor}
	\label{cor11}
In the above setting, if $\phi_F$ is injective, then $\operatorname{Rat}(\Gamma)$ is isomorphic to $\overline{\boldsymbol{T}(X_1, \ldots, X_n)} / \boldsymbol{E}(\operatorname{Im}(\phi_F))$ as a $\boldsymbol{T}$-algebra.
In particular, if $\operatorname{deg}(D) \ge 2g(\Gamma) + 1$, then $\phi_F$ is injective.
\end{cor}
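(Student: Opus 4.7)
The strategy is to apply Proposition \ref{prop2} to $V := \operatorname{Im}(\phi_F)$ and then identify its natural compactification $V^{\prime}$ with $\Gamma$ as a tropical curve. Writing $\psi \colon \overline{\boldsymbol{T}(X_1, \ldots, X_n)} \to \operatorname{Rat}(\Gamma)$ for the $\boldsymbol{T}$-algebra homomorphism defined by $X_i \mapsto f_i$, one has $\psi(h)(y) = h(\phi_F(y))$ for $y \in \Gamma \setminus \Gamma_{\infty}$, and so $\operatorname{Ker}(\psi) = \boldsymbol{E}(V)$. Thus the sought isomorphism reduces to establishing $\overline{\boldsymbol{T}(X_1, \ldots, X_n)} / \boldsymbol{E}(V) \cong \operatorname{Rat}(\Gamma)$ as $\boldsymbol{T}$-algebras.

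First I would verify the geometric hypotheses of Proposition \ref{prop2} on $V$. Since $\phi_F$ is continuous and injective, and since each leaf edge of $\Gamma$ incident to a point at infinity maps to a closed ray in $\boldsymbol{R}^n$ with nonzero integer direction vector (otherwise injectivity on that leaf edge fails), the image $V$ is closed, connected, and of geometric dimension at most one. To see that $\boldsymbol{E}(V)$ is finitely generated, I would adapt the proof of Corollary \ref{cor8}: decompose $\Gamma$ into finitely many intervals on each of which every $f_i$ has constant integer slope; the image of each such interval under $\phi_F$ is then an $\boldsymbol{R}$-rational polyhedral subset of $\boldsymbol{R}^n$, so $V$ is a finite union of such sets and Theorem \ref{thm:main} (via Lemma \ref{lem8} and Lemma \ref{lem6}) delivers the conclusion.

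The main obstacle is verifying the last hypothesis of Proposition \ref{prop2}---that no two distinct rays of $V$ share a primitive direction vector---together with the finer requirement that $\operatorname{deg}_e(\phi_F) = 1$ on every edge $e$ of $\Gamma$, which is needed so that $\phi_F$ extends to an isomorphism of tropical curves $\Gamma \to V^{\prime}$. Both conditions must be deduced from the hypothesis that $F$ contains all extremals of $R(D)$ up to tropical scalar. Heuristically, if either condition failed, say $\operatorname{deg}_e(\phi_F) = k > 1$ on some edge $e$, then every element of the sub-semifield over $\boldsymbol{T}$ generated by $f_1, \ldots, f_n$ would have integer slope divisible by $k$ on $e$, contradicting the fact that extremals of $R(D)$ jointly realize every integer slope on each edge; a parallel argument rules out coinciding ray directions at distinct points at infinity. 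To make this rigorous I would invoke the characterization of extremals in \cite[Lemma~5]{Haase=Musiker=Yu} and carry out a combinatorial analysis on a suitable model of $\Gamma$.

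Once these conditions are in place, Proposition \ref{prop2} yields $\operatorname{Rat}(V^{\prime}) \cong \overline{\boldsymbol{T}(X_1, \ldots, X_n)} / \boldsymbol{E}(V)$, and the extension of $\phi_F$ to the points at infinity produces an isomorphism $\Gamma \cong V^{\prime}$ of tropical curves, concluding $\operatorname{Rat}(\Gamma) \cong \overline{\boldsymbol{T}(X_1, \ldots, X_n)} / \boldsymbol{E}(\operatorname{Im}(\phi_F))$. For the ``in particular'' clause, I would invoke the tropical Riemann--Roch theorem: when $\operatorname{deg}(D) \ge 2g(\Gamma) + 1$, the tropical rank satisfies $r(D) \ge 2$, so the complete linear system $|D|$ separates points of $\Gamma$; consequently, for any distinct $x, y \in \Gamma$ some $f \in R(D)$ satisfies $f(x) \ne f(y)$, and since $F$ generates $R(D)$ as a $\boldsymbol{T}$-module, some $f_i$ does, whence $\phi_F(x) \ne \phi_F(y)$.
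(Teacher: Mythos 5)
Your overall strategy --- reducing to Proposition \ref{prop2} applied to $V := \operatorname{Im}(\phi_F)$ and verifying its hypotheses --- is the same as the paper's, and the preliminary verifications (closedness, connectedness, geometric dimension one, finite generation of $\boldsymbol{E}(V)$ via Theorem \ref{thm:main}) go through essentially as you describe. Your worry about $\operatorname{deg}_e(\phi_F)=1$ is settled more cheaply than you suggest: the paper simply quotes \cite[Lemma 3.3.10]{JuAe1}, which says that an injective $\phi_F$ is automatically a local isometry onto its image.

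The genuine gap is in the one step you yourself flag as ``the main obstacle'' and then leave as a heuristic: showing that no two distinct rays of $\operatorname{Im}(\phi_F)$ share a primitive direction vector. Your proposed contradiction --- ``extremals of $R(D)$ jointly realize every integer slope on each edge'' --- is not the relevant statement and does not address the problem. If two rays $e_1, e_2$ of the image were parallel with direction $(s_1,\ldots,s_n)$, what follows is that \emph{every} generator $f_j$ has the \emph{same} slope $s_j$ toward infinity on the two corresponding unbounded edges $e_1', e_2'$ of $\Gamma$; since $F$ contains all extremals up to tropical scalar, the contradiction one must produce is an extremal of $R(D)$ whose slopes toward infinity on $e_1'$ and $e_2'$ \emph{differ}. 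Realizing many slopes on a single edge says nothing about decoupling the slopes on two different edges. The paper constructs the required extremal explicitly: \cite[Lemma 3.3.10]{JuAe1} gives some $j$ with $s_j=\pm1$, hence $(D+\operatorname{div}(f_j))(x_1')>0$, and one then modifies $f_j$ by rational functions supported near $x_1'$, in three separate cases ($\Gamma$ a tree; $\Gamma$ not a tree with $s_j=-1$; $\Gamma$ not a tree with $s_j=1$, which needs a further correction supported on a maximal tree through $x_1'$), invoking \cite[Lemma 3.11]{Haase=Musiker=Yu} to certify extremality. None of this construction is present in your proposal, and your heuristic does not supply a route to it. The ``in particular'' clause is fine in spirit (the paper cites \cite[Theorem 50 and Lemma 41]{Haase=Musiker=Yu}), though your intermediate claim $r(D)\ge 2$ is not what Riemann--Roch gives; the relevant point is that $\operatorname{deg}(D)\ge 2g(\Gamma)+1$ forces $r(D-x-y)=r(D)-2$ for all points $x,y$, which is what separating points requires.
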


\begin{proof}
If $\Gamma$ is a singleton, then the assertion is clear.
Assume that $\Gamma$ is not a singleton.
By the definition of $\phi_F$ and Theorem \ref{thm:main}, $\operatorname{Im}(\phi_F)$ is a closed subset of $\boldsymbol{R}^n$ of geometric dimension one whose associated congruence $\boldsymbol{E}(\operatorname{Im}(\phi_F))$ is finitely generated as a congruence.
By \cite[Lemma 3.3.10]{JuAe1}, if $\phi_F$ is injective, then $\phi_F$ is a local isometry on the image.
If $F$ consists of only extremals, i.e., $F$ is minimal, and the statement holds, then it is also true when $F$ is an arbitrary finite generating set of $R(D)$ since $\operatorname{Im}(\phi_F)$ cannot have two distinct rays whose primitive direction vectors coincide.
Hence, without loss of generality, we may assume that $F$ has only extremals.
Assume that there exist two distinct rays $e_1$ and $e_2$ of $\operatorname{Im}(\phi_F)$ which have the same primitive direction vector $(s_1, \ldots, s_n) \in \boldsymbol{Z}^n \setminus \{ (0, \ldots, 0) \}$.
Let $e_i^{\prime}$ be the (closed) edge of $\Gamma$ corresponding to $e_i$ and $x_i^{\prime}$ the endpoint of $e_i^{\prime}$ at infinity.
By assumption, for any $j$, the slopes of $f_j$ on $e_1^{\prime}$ and $e_2^{\prime}$ in the direction to $x_1^{\prime}$ and $x_2^{\prime}$ are $s_j$.
By \cite[Lemma 3.3.10]{JuAe1} again, there exists at least one $j$ such that $s_j$ is one or minus one.
In particular, if $\Gamma$ is a tree, there exists $j$ such that $s_j = -1$.
Then we have $(D + \operatorname{div}(f_j))(x_1^{\prime}) = D(x_1^{\prime}) + \operatorname{div}(f_j)(x_1^{\prime}) > 0$ and for a rational function $f$ on $\Gamma$ which has slope one from $x_1^{\prime}$ to $x_2^{\prime}$ and is constant elsewhere, the rational function $f \odot f_j$ is an extremal of $R(D)$ by \cite[Lemma 3.11]{Haase=Musiker=Yu} and this has different slopes on $e_1^{\prime}$ and $e_2^{\prime}$ in the directions to $x_1^{\prime}$ and $x_2^{\prime}$, respectively.
This is a contradiction. 

Assume that $\Gamma$ is not a tree.
Let $x_0^{\prime}$ be the nearest point to $x_1^{\prime}$ whose valency is at least three.
We first consider the case $s_j = -1$.
Then we have $D(x_1^{\prime}) + \operatorname{div}(f_j)(x_1^{\prime}) > 0$ again and a rational function $g$ on $\Gamma$ which has slope one from $x_1^{\prime}$ to $x_0^{\prime}$, the rational function $g \odot f_j$ is an extremal of $R(D)$ by \cite[Lemma 3.11]{Haase=Musiker=Yu} and this has different slopes on $e_1^{\prime}$ and $e_2^{\prime}$ in the directions to $x_1^{\prime}$ and $x_2^{\prime}$, respectively, which is a contradiction.
We next consider the case $s_j = 1$.
Let $T$ be the maximum subgraph of $\Gamma$ which is a tree and contains $x_1^{\prime}$.
Then there exists a rational function $h \in R(D + \operatorname{div}(f_j))$ such that $D + \operatorname{div}(f_j) + \operatorname{div}(h)$ is zero on $T \setminus \{ x_1^{\prime} \}$ and $h$ is constant on $\Gamma \setminus T$.
Since $D(x_1^{\prime}) + \operatorname{div}(f_j)(x_1^{\prime}) > 0$, the rational function $g^{\odot (-1)} \odot h \odot f_j$ is an extremal of $R(D)$ and has different slopes on $e_1^{\prime}$ and $e_2^{\prime}$ in the dirextions to $x_1^{\prime}$ and $x_2^{\prime}$, respectively.
It is a contradiction.

Therefore, if $\phi_F$ is injective, then there exist no distinct rays on $\operatorname{Im}(\phi_F)$ which have the same primitive direction vector, and hence we have the conclusion by Proposition \ref{prop2}.
The last assertion follows from \cite[Theorem 50 and Lemma 41]{Haase=Musiker=Yu}.
\end{proof}

\end{document}